\newcommand{\CI}{\mathcal{C}^{\infty}}
\newcommand{\RR}{\mathbf{R}}
\DeclareMathOperator{\sgn}{sgn}
\newcommand{\restrictedto}{\big\rvert}
\newcommand{\Lap}{\Delta}
\newcommand{\RNum}[1]{\uppercase\expandafter{\romannumeral #1\relax}}
\newcommand{\df}{\coloneqq}
\newcommand{\la}{\langle}
\newcommand{\ra}{\rangle}
\newcommand{\RRS}{\RR^2\backslash S}
\newcommand{\WF}{\operatorname{WF}}
\newcommand{\aalpha}{\boldsymbol{\alpha}}
\newcommand{\pa}{P_{\alpha}}
\DeclareMathOperator{\im}{\text{Im}}
\DeclareMathOperator{\ima}{\emph{Im}}
\DeclareMathOperator{\re}{\text{Re}}
\DeclareMathOperator{\rea}{\emph{Re}}
\DeclareMathOperator{\Tr}{Tr}
\DeclareMathOperator{\supp}{supp}
\DeclareMathOperator{\Id}{Id}
\newcommand{\dom}{\mathcal{D}}
\newcommand{\ssubset}{\subset\joinrel\subset}
\newtheorem{theorem}{Theorem}[section]
\newtheorem*{theorem*}{Theorem}
\newtheorem*{corollary*}{Corollary}
\newtheorem{corollary}[theorem]{Corollary}
\newtheorem{lemma}[theorem]{Lemma}
\newtheorem{definition}[theorem]{Definition}
\newtheorem{proposition}[theorem]{Proposition}
\newtheorem{remark}[theorem]{Remark}
\newtheorem*{property1}{Partition Property 1}
\title[Aharonov--Bohm Trace]{The wave trace and resonances of the magnetic Hamiltonian with singular vector potentials}
\author{Mengxuan Yang}
\date{\today}
\address{Department of Mathematics, Northwestern University}
\email{mxyang@math.northwestern.edu}
\begin{document}

\maketitle
\begin{abstract}
    We study leading order singularities of the wave trace of the Aharonov--Bohm Hamiltonian on $\RR^2$ with multiple solenoids under a generic assumption that no three solenoids are collinear. Then we apply our formula to get a lower bound of scattering resonances in a logarithmic neighborhood near the real axis.
\end{abstract}

\section{Introduction}
We study the singularities of the wave trace and resonances of the half-wave propagator $U_{\aalpha}(t)=e^{it\sqrt{P_{\aalpha}}}$ of the electromagnetic Hamiltonian:
\begin{equation*}
    P_{\aalpha}=\left(\frac{1}{i}\nabla-\Vec{A}\right)^2\ \text{on }\RRS
\end{equation*}
where $S=\{s_i=(x_i,y_i)|1\leq i\leq n\}$ and $\Vec{A}$ is the sum of $n$ singular vector potentials defined in equation \eqref{potential}. This  corresponds to the Aharonov--Bohm Hamiltonian \cite{aharonov1959significance} of $n$ infinitely thin solenoids in $\RR^3$ parallel to the $z$-axis. In particular, while the magnetic potential is non-vanishing everywhere, the magnetic field $\Vec{B}=\nabla\times \Vec{A}$ vanishes in $\RRS$, so the vector potential $\Vec{A}$ is curl-free.

The main result of this paper describes the singularities of the wave trace of $U_{\aalpha}(t)$ under a generic assumption that \emph{no three solenoids are collinear}:

\begin{theorem}
\label{thm1.1}
Consider the wave propagator $U_{\aalpha}(t)=e^{it\sqrt{P_{\aalpha}}}$ for the Hamiltonian $P_{\aalpha}$. The regularized wave propagator $U_{\aalpha}(t)-U_0(t)$ defined in Section \ref{reg&part} is in the trace class in the distributional sense. The singularities of the regularized trace $\Tr \big[ U_{\aalpha}(t)-U_{0}(t)\big]$ for $t>0$ are given by lengths of all closed polygonal trajectories in $\RRS$, with vertices at $S$. Moreover, the contribution to the singularity at $t=L$ from the closed polygonal trajectory $\gamma$ with length $L$ is given by the oscillatory integral expression:
\begin{equation*}
\int_{\RR_{\lambda}} e^{i\lambda\left(t-L \right)} a(t;\lambda)d\lambda     
\end{equation*}
where the amplitude $a\in S^{-\frac{m}{2}}(\RR; \RR_{\lambda})$ is given by 
\begin{equation*}
    (2\pi)^{\frac{m-2}{2}} \cdot e^{-i\frac{m\pi}{4}} \cdot L_0 \cdot \frac{d_{\gamma,\aalpha}}{{\prod_{i=1}^{m}l_i}^{\frac{1}{2}}} \cdot \rho(\lambda) \cdot \lambda^{-\frac{m}{2}} \mod S^{-\frac{m+1}{2}}
\end{equation*}
where $m$ stands for the number of corners of the polygonal path $\gamma$ (with multiplicities in case $\gamma$ loops itself); $l_i$ is the length of the $i$-th segment of $\gamma$; $L_0$ is the primitive length of $\gamma$ in case it loops itself more than once; $\rho(\lambda)\in \CI(\RR_{\lambda})$ is a smooth function satisfying $\rho\equiv 0$ for $\lambda<0$ and $\rho\equiv 1$ for $\lambda>1$; the coefficient $d_{\gamma,\aalpha}$ is given by
$$\left(\prod_{l=1}^{m} \sin(\pi\alpha_{k_l})\cdot \frac{e^{-i\left(\frac{\beta_{l}}{2}\right)}}{\cos(\frac{\beta_{l}}{2})}\right)
        \cdot \left(\prod_{k=1}^{n} e^{-2\pi i\cdot\alpha_k\cdot w_{\gamma,s_k}} \right)
$$
where the first term depends on the magnetic flux $\alpha_{k_l}$ and the angle $\beta_{l}$ at the $l$-th vertex along $\gamma$; the second term depends on the fractional winding number $w_{\gamma,s_k}$ (see Definition \ref{frac_winding}) of $\gamma$ with respect to the solenoid $s_k$ and the magnetic flux $\alpha_k$ there.
\end{theorem}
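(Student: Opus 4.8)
The plan is to build an explicit diffractive parametrix for $U_{\aalpha}(t)$ by iterating the single-solenoid propagator and then to read the trace off from the structure of the resulting conormal (Lagrangian) distributions. First I would recall the explicit half-wave propagator for a \emph{single} Aharonov--Bohm solenoid of flux $\alpha$: separating variables in polar coordinates produces a Bessel series with orders shifted by the flux, and its Schwartz kernel splits as a ``geometric'' part that agrees microlocally with the free propagator $U_0(t)$ on the undeflected rays (up to the magnetic holonomy) plus a ``diffractive'' part. The diffractive part is conormal to the forward light cone $\{|x-s|+|y-s|=t\}$ issuing from the solenoid, and its principal symbol is a universal half-integer power of the frequency times the Aharonov--Bohm scattering amplitude, which is a multiple of $\sin(\pi\alpha)\,e^{-i\beta/2}/\cos(\beta/2)$; here $\beta$ is the interior polygon angle at $s$ between the incoming and outgoing rays, so that $\beta=\pi$ is the undeflected, forward-scattering direction. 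I would take this single-solenoid analysis as known, since it follows from stationary phase applied to the Bessel series, or can simply be cited.

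Next, using the regularization and partition of unity set up in Section~\ref{reg&part}, I would expand $U_{\aalpha}(t)-U_0(t)$ as a sum over finite ``itineraries'' $(k_1,\dots,k_m)$ of solenoids, where the $(k_1,\dots,k_m)$-term describes waves that diffract successively off $s_{k_1},\dots,s_{k_m}$ and propagate freely along the intervening segments. Each term is an $m$-fold composition of free propagators with single-solenoid diffractive kernels, and since each solenoid is a point, every composition reduces to an oscillatory integral in a single additional frequency variable; stationary phase then shows that the composite is again conormal --- now to the ``polygonal front'' $\{(x,y):\ \exists\ \text{a polygonal path}\ x\to s_{k_1}\to\cdots\to s_{k_m}\to y\ \text{of total length}\ t\}$ --- with principal symbol the product of the $m$ diffraction coefficients times the free spreading factors $\prod_i l_i^{-1/2}$ along the segments. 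The magnetic data enters precisely here: gauge-transforming $A$ to be locally trivial near each solenoid leaves a residual holonomy, and the flux-dependent phases of the scattering amplitudes recombine with these segment holonomies into the total winding factor $\prod_k e^{-2\pi i\alpha_k w_{\gamma,s_k}}$ of Definition~\ref{frac_winding}, leaving the universal per-vertex phase $e^{-i\beta_l/2}$.

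Then I would compute $\Tr[U_{\aalpha}(t)-U_0(t)]=\int(K_{\aalpha}-K_0)(x,x)\,dx$ by restricting each composite kernel to the diagonal and integrating in $x$. For a fixed itinerary this is a stationary phase problem whose critical set is exactly the set of \emph{closed} polygonal trajectories $\gamma$ with vertices on $S$ realizing that itinerary; here the hypothesis that no three solenoids are collinear is essential, since it forces every such $\gamma$ to have genuine corners ($\beta_l\neq\pi$, so every diffraction coefficient is finite) and forbids $\gamma$ from passing through a further solenoid, which makes the stationary phase nondegenerate and the orbit sum clean. Carrying out the stationary phase yields, for each $\gamma$ of length $L$, the advertised contribution $\int_{\RR_\lambda}e^{i\lambda(t-L)}a(t;\lambda)\,d\lambda$: the universal constants from the Fourier transform together with the $m$ stationary-phase determinants and Maslov factors assemble into $(2\pi)^{(m-2)/2}e^{-im\pi/4}$; the $m$-fold product of diffraction symbols, each homogeneous of degree $-\tfrac12$, yields $\lambda^{-m/2}$ and hence $a\in S^{-m/2}$; the segment factors give $\prod_i l_i^{-1/2}$; the combinatorics of iterating a primitive closed orbit inserts the primitive length $L_0$; and the accumulated holonomy contributes the winding factor in $d_{\gamma,\aalpha}$. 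The smooth cutoff $\rho(\lambda)$ merely absorbs the low-frequency regime, where the parametrix is unused and the regularized propagator is smooth, without affecting the singularity at $t=L$.

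The step I expect to be hardest is the composition analysis: controlling the sum over itineraries and, inside each term, verifying that the iterated composition of diffractive kernels genuinely remains in the clean conormal calculus with the advertised principal symbol, so that parametrix errors and subprincipal terms cannot contaminate the leading singularity at $t=L$. Tightly coupled to this is the exact bookkeeping of the magnetic holonomy and of the branch of $\cos(\beta_l/2)$ and the phase $e^{-i\beta_l/2}$ through each composition; this is the genuinely new ingredient compared with the scalar conic-diffraction trace formulas, and pinning down $d_{\gamma,\aalpha}$ exactly --- rather than only up to an undetermined unimodular constant --- is what requires tracking the gauge transformations and the Maslov-type phases with care.
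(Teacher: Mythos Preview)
Your proposal is essentially correct and follows the same approach as the paper: build the multi-diffraction parametrix by composing single-solenoid diffractive propagators (with local gauge transformations supplying the holonomy, which then reassembles into the fractional winding factor), compute the trace by stationary phase on the diagonal along closed diffractive geodesics, and assemble via the microlocal partition of unity of Section~\ref{reg&part}. The paper's implementation differs only in technical organization---in particular, near each solenoid the partition of unity uses smooth ``solenoid cutoffs'' rather than pseudodifferential operators, and an Egorov argument is needed to reduce those terms to the interior microlocalizer case; also the primitive length $L_0$ appears not from orbit combinatorics but from summing the symbols of the partition of unity along $\gamma$---so be prepared to fill in those details, and note that the $\lambda^{-m/2}$ arises from the $(m-1)$ stationary-phase compositions plus one more stationary phase in the trace integration, not from the individual diffraction coefficients (which are of order $0$).
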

 
\begin{remark}
The first term in the above formula is given by the product of the so-called diffraction coefficients (see Section \ref{single_d}), while the second term is the holonomy contribution from the closed polygonal trajectory $\gamma$ due to the presence of the vector potentials.
\end{remark}

An application of our main theorem yields a lower bound on the number of scattering resonances within certain logarithmic neighborhood of the positive real axis:
\begin{corollary*}
Let $\mu_j$ be resonances of Hamiltonian $P_{\aalpha}$ on $\RRS$ subject to the geometric assumptions of the previous theorem. Then for any $\delta>0$, there exists a $r(\delta)>0$ such that
\begin{equation*}
    \#\left\{\mu_j\bigg\rvert \ 0\leq -\ima \mu_j\leq \left(\frac{1}{2d_{\text{max}}}+\epsilon\right)\cdot\log |\mu_j| ,\ \rea\mu_j\leq r\right\}\geq r^{1-\delta},
\end{equation*}
if $r>r(\delta)$, where $d_{\text{max}}$ is the maximal distance among the distances between all solenoids.
\end{corollary*}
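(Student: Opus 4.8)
\emph{Proof strategy.} The plan is to deduce the bound from Theorem~\ref{thm1.1} by the standard mechanism that a genuine singularity of the regularized wave trace at a positive time $t=L$ forces many scattering poles into a logarithmic neighbourhood of the real axis (in the spirit of Ikawa and of S\'a Barreto--Zworski), taking $L=2d_{\max}$, the length of the closed polygonal trajectory that runs back and forth between the two solenoids realizing the maximal distance. I would first assemble the analytic inputs for $P_{\aalpha}$: the meromorphic continuation of its resolvent with poles at the resonances $\mu_j$ (in $\{\im\mu_j\le 0\}$), the scattering determinant $s(\lambda)$, which is holomorphic of finite polynomial order in the lower half-plane, satisfies $|s|=1$ on $\RR$, and vanishes exactly at the $\mu_j$ with multiplicity $m_j$, and the a priori polynomial bound $\#\{j:|\mu_j|\le r\}\lesssim r^M$; these are available for two-dimensional Hamiltonians of this kind. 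Via the Birman--Krein / Poisson formula this gives, modulo a $\CI$ term (equivalently, one whose Fourier transform decays rapidly),
\[
\Tr\big[U_{\aalpha}(t)-U_0(t)\big]\ \equiv\ \sum_j m_j\,e^{it\mu_j}\qquad(t>0),
\]
so that the Fourier transform of $\Tr[U_{\aalpha}-U_0]$ is, up to such corrections, the resonance-dependent part of $s'(\lambda)/s(\lambda)$.

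Next I would isolate the singularity. Fix the trajectory $\gamma$ of length $L=2d_{\max}$, and check that its coefficient $d_{\gamma,\aalpha}$ from Theorem~\ref{thm1.1} is nonzero: the factors $\sin(\pi\alpha_{k_l})$ do not vanish because no flux $\alpha_k$ is an integer, the factors $e^{-i\beta_l/2}/\cos(\beta_l/2)$ are finite and nonzero because the hypothesis that no three solenoids are collinear keeps the relevant vertex angles away from the degenerate value, and the holonomy factors have modulus one. Choosing $\chi\in C_c^\infty(\RR)$ with $\chi\equiv 1$ near $L$ and $\supp\chi$ small enough to meet the (locally finite) set of trajectory lengths only at $L$, Theorem~\ref{thm1.1} writes $\chi(t)\Tr[U_{\aalpha}(t)-U_0(t)]$, modulo $\CI$, as a finite sum of single-orbit oscillatory integrals all with phase $t-L$ whose leading symbols add to a nonzero multiple of $\lambda^{-m/2}$; one must check that these top-order terms do not cancel, which is the second place the geometric genericity is used. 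Hence $\chi\Tr[U_{\aalpha}-U_0]$ is conormal at $t=L$ with nontrivial principal part, so $\widehat{\Tr[U_{\aalpha}-U_0]}(\lambda)$ contains a term $\asymp\lambda^{-N_0}$ oscillating like $e^{\pm iL\lambda}$ as $\lambda\to+\infty$, and in particular does not decay rapidly.

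Then I would run the zero-counting argument. From the factorization $s(\lambda)=e^{iq(\lambda)}\prod_j\frac{\lambda-\overline{\mu_j}}{\lambda-\mu_j}$ with $q$ smooth of controlled growth one gets, on $\RR$,
\[
\frac{s'(\lambda)}{s(\lambda)}=iq'(\lambda)-2i\sum_j\frac{\im\mu_j}{|\lambda-\mu_j|^2},
\]
a superposition of Poisson kernels dominated by the resonances closest to $\RR$: those at depth $\asymp(-\im\mu_j)$ contribute $\asymp(-\im\mu_j)^{-1}$ near $\lambda=\re\mu_j$ and become negligible to the $e^{\pm iL\lambda}$-oscillation once $-\im\mu_j$ passes the threshold $\tfrac{1}{L}\log\lambda$. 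Suppose that for some sequence $r=r_\ell\to\infty$
\[
\#\Big\{\mu_j:\ |\mu_j|\le r,\ -\im\mu_j\le\big(\tfrac{1}{2d_{\max}}+\epsilon\big)\log|\mu_j|\Big\}<r^{1-\delta}.
\]
Separating off the resonances in this logarithmic box and bounding the remaining contribution to $s'/s$ on $|\lambda|\asymp r$ by the a priori polynomial count together with a Jensen/Carleman estimate for $s$ in a half-disc of radius $r$ (using $|s|=1$ on $\RR$), one finds that $s'/s$ cannot carry the $\lambda^{-N_0}e^{\pm iL\lambda}$-oscillation of the previous step at the threshold depth $\tfrac{1}{L}\log\lambda$ — a contradiction. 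The exponent $1-\delta$, rather than a linear bound, reflects that only the non-decay and not the precise shape of the singularity is exploited; the constant $\tfrac{1}{2d_{\max}}$ is exactly the depth at which zeros can still reproduce an $e^{-iL\lambda}$ factor, with $L=2d_{\max}$ the longest trajectory length one can isolate in the length set under the genericity assumption.

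I expect the quantitative zero-counting of the last step — turning ``few resonances in the logarithmic box'' into the impossibility of the required oscillation-with-non-decay of $s'/s$, using only the finite-order bound on $s$ below $\RR$ and its unitarity on $\RR$ — to be the main obstacle; a secondary, genuine point is the non-cancellation of the top-order contributions of all length-$2d_{\max}$ trajectories in the second step, which relies again on the hypothesis that no three solenoids are collinear.
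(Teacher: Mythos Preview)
Your overall scheme---exhibit a genuine singularity of the regularized trace at a positive time and feed it into an Ikawa-type contradiction---is the right one, and is essentially what the paper does by quoting Sj\"ostrand's local trace formula \cite[Theorem~10.1]{sjostrand1997trace} as a black box rather than redeveloping a Birman--Krein/scattering-determinant argument.  There is, however, a real gap in your quantitative step that prevents the constant $\tfrac{1}{2d_{\max}}$ from coming out.

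You work with the single back-and-forth orbit of length $L=2d_{\max}$, which has $m=2$ diffractions and hence a trace singularity with symbol of order $\lambda^{-1}$.  In your contradiction step you assert that resonances at depth exceeding $\tfrac{1}{L}\log\lambda$ are ``negligible to the $e^{\pm iL\lambda}$-oscillation.''  But a single resonance at depth $\nu\log\lambda$ contributes to $\widehat{\chi u}(\lambda)$ roughly $e^{-L\nu\log\lambda}=\lambda^{-L\nu}$, and there are a priori $O(\lambda^{M})$ resonances in $\{|\mu|\le 2\lambda\}$ for some fixed $M>0$.  Summing, the tail at depth $\ge(\tfrac{1}{2d_{\max}}+\epsilon)\log\lambda$ contributes $O(\lambda^{M-1-2\epsilon d_{\max}})$, which is \emph{not} $o(\lambda^{-1})$ once $\epsilon$ is small.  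So with $L=2d_{\max}$ alone you only obtain the bound for $\nu>\tfrac{n_1+1}{2d_{\max}}$ with some $n_1>0$ absorbing the polynomial count---exactly the form of Sj\"ostrand's theorem---not the sharp $\tfrac{1}{2d_{\max}}+\epsilon$.

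The paper recovers the sharp constant by an iteration trick you did not use: apply the argument to the $k$-fold iterate of the same bounce, which has length $L_k=2kd_{\max}$ and $m=2k$ diffractions, so the trace singularity there has symbol of order $\lambda^{-k}$.  Sj\"ostrand's theorem (or your contradiction scheme) then yields the lower bound for any $\nu>\tfrac{n_1+k}{2kd_{\max}-\epsilon}$, and letting $k\to\infty$ produces $\nu>\tfrac{1}{2d_{\max}}+\epsilon$.  Equivalently, in your tail estimate the exponent becomes $M-k-2k\epsilon d_{\max}$, which is $<-k$ once $k>M/(2\epsilon d_{\max})$.  This is the missing idea.

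A secondary point: you invoke the Birman--Krein formula and a scattering determinant $s(\lambda)$ as if the setting were short-range.  The vector potential here is genuinely long-range ($|\Vec A|\sim r^{-1}$), which is precisely why the paper works inside Sj\"ostrand's black-box-with-long-range-perturbation framework and defines resonances via complex scaling; the Poisson identity you write down is not available off the shelf.  This is not fatal to your plan---Sj\"ostrand's paper supplies the needed substitute---but it is not the ``standard'' mechanism you cite.
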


The presence of the solenoids with the singular vector potentials generates a diffraction effect. The diffraction refers to the effect that when a propagating wave or a quantum particle encounters a corner of an obstacle or a slit, its wave front bends around the corner of the obstacle and propagates into the geometrical shadow region. For the wave equation on a singular domain, the singularities of the wave equation likewise split into two types after they encounter the singularity of the domain. One propagates along the natural geometric extension of the incoming ray, while other singularities emerge at the singular point and start propagating along all outgoing directions as a spherical wave called the diffractive wave. The singular vector potential here likewise generates a diffractive wave as in the situation of a singular domain. 

In order to prove the main theorem, by breaking down the propagation time $t$ and inserting microlocal cutoffs, we first compute a microlocalized diffractive propagator
$$\Pi_e U_{\aalpha,\gamma}(t) \Pi_s \df \Pi_e U_{\aalpha}(t-T_{n-1})\Pi_{n-1}\cdots\Pi_1 U_{\aalpha}(t_1) \Pi_s$$
of $P_{\aalpha}$ on $\RRS$ along a sequence of diffractions, where $\Pi_{\bullet}$ are pseudodifferential operators along certain diffractive polygonal trajectory $\gamma$. The computation of the diffractive propagator under a single diffraction
$$\tilde{\Pi}_{k+1} U_{\aalpha}(t_{k+1}) \tilde{\Pi}_{k}$$
is based on a previous result of the author \cite{yang2020ab} for one solenoid; we introduce additional gauge transformations in order to deal with the presence of multiple magnetic vector potentials. Then we use the theory of FIOs to obtain an oscillatory integral expression of the microlocalized propagator
$$\Pi_e U_{\aalpha,\gamma}(t) \Pi_s$$
under multiple diffractions.

To compute the singularities of the regularized trace near $t=L$ for some $L$, using a microlocal partition of unity, we decompose the (regularized) trace into a sum of finitely many microlocalized traces modulo smooth errors; each such microlocalized trace is the trace of the aforementioned microlocalized propagator:
\begin{equation*}
    \Tr \big[ U_{\aalpha}(t)-U_{0}(t)\big] \equiv  \sum_{1\leq k\leq M} \Tr \big[U_{\aalpha,\gamma}(t) B_{k}\big] \mod \CI
\end{equation*}
where $B_k\in\Psi^0_c(X)$ are a family of pseudodifferential operators.
Therefore, it remains to examine the contribution from each microlocalized trace using the oscillatory integral expression of the corresponding microlocalized propagator, then assemble all pieces using the microlocal partition of unity. The trace decomposition and standard propagation of singularities implies that the singularities of the regularized wave trace come from the microlocalized trace along closed diffractive geodesics, which are closed polygonal trajectories with vertices at the solenoids.

The wave trace and its singularities relate directly to the spectral properties of the underlying spaces, such as eigenvalues and scattering resonances. In the classic paper of Duistermaat--Guillemin \cite{duistermaat1975spectrum}, they showed the singularities of the wave trace of the Laplacian at the length of a closed geodesic on closed Riemannian manifold. Similar results were obtained by Hillairet \cite{hillairet2005contribution} on the flat surfaces with cone points and Ford--Wunsch \cite{ford2017diffractive} on general Riemanian manifold with conic singularities. Our work extends the series of projects into the framework of the Aharonov--Bohm Hamiltonian with singular vector potentials.  For the Poisson formula which relate the regularized trace to scattering resonances, we refer to \cite{bardos1982relation} and \cite{melrose1982scattering}. In this paper, we provide a regularization of the wave trace following the work of Sj\"ostrand \cite{sjostrand1997trace} which yields a local trace formula of scattering resonances. 

For the Aharonov--Bohm Hamiltonian, the singularities of the wave trace of certain closed geometric geodesics were studied by Eskin--Ralston \cite{eskin2014aharonov} in equilateral triangles with one solenoid using the technique of Gaussian beams and FIOs. They were able to recover the cosine of the flux using the leading singularities at a certain closed geometric geodesic. On the other hand, as we shall see later in this paper, we are able to recover the product of the sine of fluxes using the leading singularities of closed diffractive geodesics. Bogomolny--Pavloff--Schmit \cite{bogomolny2000diffractive} studied the diffractive singularities of rectangular billiards with one solenoid using the geometric theory of diffraction. They were only able to deal with the diffractive singularities of one solenoid in a rectangular domain.

There are various works on scattering theory of the Aharonov--Bohm Hamiltonian with multiple solenoids. {\v{S}}t'ov{\'\i}{\v{c}}ek\cite{stovicek1989green} studied the two solenoids case using a universal covering of the punctured plane, and finitely many solenoids in \cite{vstovivcek1991krein}; the results there are in terms of an infinite sum which is difficult to see the locations and amplitudes of the singularities for our purposes. Tamura\cite{tamura2007semiclassical} \cite{tamura2008time} and Ito--Tamura\cite{ito2001aharonov} \cite{ito2006semiclassical} studied the (semiclassical) scattering amplitudes and cross-sections of two solenoids with the distance going to infinity. Alexandrova--Tamura \cite{alexandrova2011resonance} \cite{alexandrova2014resonances} studied scattering resonances of two solenoids at large separation (the distance between two solenoids $d\rightarrow\infty$) using a modified complex scaling method. They also computed the resonances located in a logarithmic neighborhood for high energy. Our result generalizes the existence part of their result to arbitrary number of solenoids. The resonances generated by three and four solenoids with large separations were also studied by Tamura in \cite{tamura2017aharonov}. 

The novelty of this work is the following: this paper expand the Duistermaat--Guillemin trace formula \cite{duistermaat1975spectrum} to Hamiltonians with singular vector potentials; to the best of our knowledge, this is the first result on the trace formula for the Aharonov--Bohm Hamiltonian with multiple solenoids/vector potentials. Using our result, we also obtain a lower bound on the number of resonances within a logarithmic neighborhood of the real axis for the Aharonov--Bohm Hamiltonian. In particular, the advantage of the resonances bound in this paper is that unlike previous ones, our results applies to arbitrary number of solenoids with arbitrary distances between them.  

\subsection*{Acknowledgements:}
The author is greatly indebted to Jared Wunsch for many instructive discussions as well as valuable comments on the manuscript. The author would also like to thank Luc Hillairet for suggesting this interesting topic and providing helpful discussions and Maciej Zworski for valuable comments on the manuscript.
\section{Preliminaries}
In this section, we discuss some preliminaries on the operator $P_{\aalpha}$ and the diffractive geometry on $\RR^2$ with $n$ solenoids.

\subsection{Operators and domains}

We study the electromagnetic Hamiltonian 
\begin{equation}
\label{TotHam}
    P_{\aalpha}=\left(\frac{1}{i}\nabla-\Vec{A}\right)^2
\end{equation}
on the space $X\df\RRS$, where $S=\{s_i=(x_i,y_i)|1\leq i\leq n\}$ corresponds to locations of $n$ solenoinds and $\Vec{A}=\sum_{i=1}^{n}\Vec{A}_i$ with 
\begin{equation}
\label{potential}
    \Vec{A_i}=-\alpha_i\cdot\left(-\frac{y-y_i}{(x-x_i)^2+(y-y_i)^2}, \frac{x-x_i}{(x-x_i)^2+(y-y_i)^2} \right)
\end{equation}
being the $i$-th vector potential corresponding to $s_i$, and $\aalpha=(\alpha_1,\cdots,\alpha_n)$ is the multi-index of the magnetic fluxes of all $n$ solenoids. Without loss of generality, we assume $\alpha_i\in(0,1)$ for $1\leq i\leq n$. Note that the magnetic potential $\Vec{A}$ is singular at $s_i$ for $1\leq i\leq n$ and curl-free; therefore there is no magnetic field in $\RRS$. However, the motion of electrons in $\RRS$ can still ``feel" the influence of the magnetic field even though the electrons are completely shielded from the magnetic field. The electrons will experience a phase shift once we change the flux of the magnetic field, which can be observed by an interference experiment, although classically the change of magnetic flux has no influence on the motion of particles. This phenomenon generated by the singular magnetic potential $\Vec{A}$ is the so-called Aharonov--Bohm effect \cite{aharonov1959significance}, which suggests that the electromagnetic vector potential is more physical than the electromagnetic field in quantum mechanics.

Note that $P_{\aalpha}$ is a positive symmetric operator defined on $\CI_c(X)\subset L^2(\RR^2)$ with deficiency indices $(2n,2n)$. Therefore it admits various self-adjoint extensions. In particular, we choose the Friedrichs self-adjoint extension, which corresponds to the following function space:
$$\text{Dom}(P^{\text{Fr}}_{\aalpha})\df \dom^2_{\aalpha}=\left\{u\in L^2: P_{\aalpha} u\in L^2, u\rvert_S=0\right\}.$$
From now on, we use $P_{\aalpha}$ to denote the Friedrichs extension of the Aharonov--Bohm Hamiltonian. For detailed discussions of self-adjoint extensions of the Aharonov--Bohm Hamiltonian, we refer to \cite{adami1998aharonov} and \cite{stovicek1998aharonov}; for asymptotic behaviors of operator extensions of the Aharonov--Bohm Hamiltonian near the boundary (solenoids), see also \cite{stovicek2002generalized} \cite{mine2005aharonov} \cite{yang2020ab}. 

We define power domains by 
$$\dom_{\aalpha}^s\df \left\{u \in \dom_{\aalpha} : P^{s/2}_{\aalpha} u\in L^2\right\}$$
where $P^{s/2}_{\aalpha}$ is defined using the functional calculus. It is important to notice that away from the set of solenoids $S$, $\dom_{\aalpha}^s$ is agree with the usual Sobolev space $H^s$. 

Define the wave operator corresponding to this electromagnetic Hamiltonian:
\begin{equation*}
    \Box\df D_t^2-P_{\aalpha},
\end{equation*}
where $D_t=\frac{1}{i}\partial_t$. In particular, we want to study the (half-)wave propagator 
$$U_{\aalpha}(t)=e^{it\sqrt{P_{\aalpha}}}$$ 
and its trace $``\Tr U_{\aalpha}(t)"$. One crucial fact we need to point out is that even in the sense of distributions, $U_{\aalpha}(t)$ is not in the trace class. Therefore we need a regularization using a ``free" propagator  (cf. \cite{melrose1982scattering}). The long-range effect of the magnetic vector potential falls into the framework of \cite{sjostrand1997trace}, as long as we choose the corresponding ``free" propagator properly. We postpone the detailed discussion regarding this to Section \ref{reg&part}.

\subsection{Diffractive geometry}
\label{DG}
Now we give a brief introduction to the diffractive geometry on $X=\RRS$. This is indeed a simplified version of the diffractive geometry in \cite{ford2017diffractive} on Riemannian manifold with conic singularities. Note that all \emph{regular geodesics} in $X$ are simply straight line segments in $X=\RRS$. Therefore, by standard propagation of singularities \cite{duistermaat1972fourier}, away from the solenoid set $S$, singularities of the wave equation propagate along the straight lines in $\RRS$. However, near the solenoid there are two other types of \emph{generalized geodesics}, along which the singularities propagate, passing through the solenoids, which correspond to the diffractive and geometric waves emanating from the solenoid after the diffraction:

\begin{definition} 
Suppose $\gamma: [a,b]\rightarrow \RR^2$ is a continuous map whose image is a \emph{polygonal trajectory} with vertices at the solenoid set $S$.
\begin{itemize}
\item The polygonal trajectory $\gamma$ is a \emph{diffractive geodesic} if $\gamma^{-1}(S)$ is non-empty and $\gamma\big([a,b]\backslash\gamma^{-1}(S)\big)$ are finitely many straight line segments concatenated by the points in $S$.
\item The polygonal trajectory $\gamma$ is a (partially) \emph{geometric geodesic} if it is a diffractive geodesic and it contains a straight line segment parametrized by $[c,d]\subset[a,b]$ such that $\big(\gamma\restrictedto_{[c,d]}\big)^{-1}(S)$ non-empty, i.e., passing through at least one solenoid directly without being deflected. A geometric geodesic can be seen as the uniform limit of a family of regular geodesics in $X$.
\item In particular, the polygonal trajectory $\gamma$ is a \emph{strictly diffractive geodesic} if it is a diffractive geodesic but not a partially geometric geodesic.
\end{itemize}
\end{definition}

\begin{figure}[H]
  \includegraphics[width=0.6\linewidth]{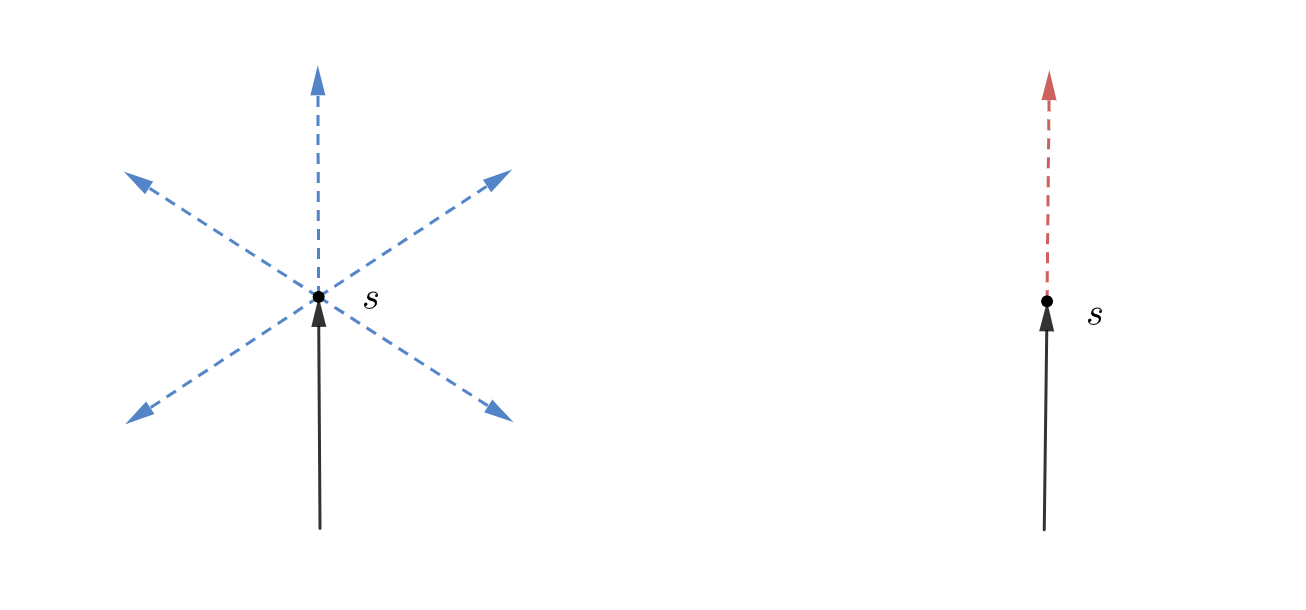}
  \caption{Diffractive and geometric geodesics near $s\in S$}
  \medskip
  \small
   The picture on the left is a family of diffractive geodesics, while the picture on the right is a geometric geodesic passing through $s$.
  \label{Fig1}
\end{figure}

Consider polar coordinates at $s\in S$ and a diffractive geodesic restricted to a neighborhood of $s$. Assume the diffractive geodesic is the concatenation of an incoming ray at the angle $\theta_{\text{in}}$ and an outgoing ray at the angle $\theta_{\text{out}}$. We use $\beta=\theta_{\text{out}}-\theta_{\text{in}}$ to denote the diffraction angle at the solenoid $s$ of such diffractive geodesic. In particular, $\beta=\pm\pi$ correspond to geometric geodesics or non-strictly diffractive geodesics at $s$. 

We shall also use the geodesic flow at the level of the cotangnet bundle $T^*X \cong (\RRS)\times \RR^2$. We also write $S^*X$ for the corresponding cosphere bundle. In the following, we restrict our consideration of the geodesic flow to the interior of $X$ without considering the behavior at the solenoid set $S$; we therefore can employ the standard pseudodifferential calculus on $\RR^2$ rather than b-calculus.

The Hamiltonian vector field of $P_{\aalpha}$ is $\textbf{H}_{\aalpha}=2\xi\partial_x+2\eta\partial_y$, where $\xi,\eta$ are dual variables of $x,y$ correspondingly. Let $\hat{\textbf{H}}_{\aalpha}$ be the Hamiltonian vector field of $P_{\aalpha}$ projected to the cosphere bundle $S^*X$. The integral curves of $\hat{\textbf{H}}_{\aalpha}$ on $S^*X$ are the unit speed polygonal trajectories in $S^*X$ with (possible) jumps in $(\xi,\eta)$-variables at $S$. Furthermore, $\xi,\eta$ are constants on each connected component (a straight-line segment) of an integral curve. In particular, the projection of these (broken) integral curves are the diffractive geodesics we defined before.

Given this background, we may define two symmetric relations between points in $S^*X$: a ``geometric" relation and a ``diffractive'' relation. These correspond to two different possibilities for linking points in $S^*X$ by integral curves of $\hat{\textbf{H}}_{\aalpha}$. Note that since we are on $\RR^2$, there is a canonical metric on $S^*X$.

\begin{definition}
Let $q$ and $q'$ be points in the cosphere bundle $S^*X$.
\begin{enumerate}
    \item We define $q$ and $q'$ to be \emph{diffractively related by time $t$} if there exists an integral curve $\gamma\subset S^*X$ of length $t$ with starting point $q$ and end point $q'$. In particular, for each possible time $t'$ with jump in the $(\xi,\eta)$-variable, the end point at $t'-$ and the starting point at $t'+$ must lie over the same point $s$ of $S$.    
    \item Among points that are diffractively related, we define $q$ and $q'$ to be \emph{geometrically related by time $t$} if there exists a continuous\footnote{There are actually removable discontinuities along $\gamma$ over the set $S$.} integral curve $\gamma\subset S^*X$ of length $t$ with starting point $q$ and end point $q'$.
\end{enumerate}
\end{definition}

Note that the projection of the aforementioned integral curves to $X$ are diffractive geodesics. We can thus relate the integral curves of $\hat{\textbf{H}}_{\aalpha}$ over $X$ to the diffractive geodesics in $X$ as the following:

\begin{proposition}
Suppose that $q$ and $q'$ are points in $S^*X$. Then
\begin{enumerate}
    \item $q$ and $q'$ are diffractively related by time $t$ if and only if they are connected by a lifted diffractive geodesic from $X$ to $S^*X$ of length $t$;
    \item $q$ and $q'$ are geometrically related by time $t$ if and only if they are connected by a lifted geometric geodesic from $X$ to $S^*X$ of length $t$. 
\end{enumerate}
\end{proposition}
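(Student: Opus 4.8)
The plan is to prove this by directly unwinding the definitions, using the explicit form of the Hamiltonian vector field $\mathbf{H}_{\aalpha}=2\xi\partial_x+2\eta\partial_y$ together with the structural fact recorded above: the integral curves of $\hat{\mathbf{H}}_{\aalpha}$ on $S^*X$ are unit-speed polygonal trajectories whose covector $(\xi,\eta)$ is constant on each connected component and may only jump over the solenoid set $S$. Note that no analytic input is needed beyond the principal-symbol computation $\sigma(P_{\aalpha})=|\xi|^2$, so that the magnetic potential does not affect the geodesic flow; the whole content is to exhibit the bijection between such integral curves and the lifts of the diffractive (resp.\ geometric) geodesics defined earlier.

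First I would make precise the lift operation. Given a diffractive geodesic $\gamma:[a,b]\to\RR^2$ parametrized by arc length, on the complement of $\gamma^{-1}(S)$ the trajectory $\gamma$ is locally a straight unit-speed segment, and we set $\tilde\gamma(\tau)=(\gamma(\tau),\dot\gamma(\tau))$, identifying $T_x\RR^2$ with $T^*_x\RR^2$ by the Euclidean metric. Over a point of $\gamma^{-1}(S)$ the lift is not defined, but since $\gamma$ is a polygonal trajectory the left and right limits of the base point agree (both equal the relevant $s\in S$), and the left and right limits of the covector are the unit vectors along the incoming and outgoing segments; this is exactly an integral curve of $\hat{\mathbf{H}}_{\aalpha}$ with a jump in $(\xi,\eta)$ over $s$. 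Conversely, given any integral curve $\tilde\gamma$ of $\hat{\mathbf{H}}_{\aalpha}$ of length $t$, on each subinterval between successive jump times its base projection solves $\dot x=2\xi$, $\dot y=2\eta$ with $(\xi,\eta)$ constant, hence is a straight segment, and at each jump time the base point lies over $S$; so the projection is a concatenation of finitely many straight segments joined at points of $S$, i.e.\ a diffractive geodesic, and $\tilde\gamma$ is its lift. Arc length is preserved since everything is unit speed. This proves part (1): $q$ and $q'$ are diffractively related by time $t$ iff the connecting integral curve projects to a diffractive geodesic of length $t$ whose lift joins $q$ to $q'$.

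For part (2) I would run the same argument while imposing the continuity condition from the definition of the geometric relation. A lifted diffractive geodesic $\tilde\gamma$ extends continuously across a point of $\gamma^{-1}(S)$, in the sense of a removable discontinuity (matching left and right limits of $(\xi,\eta)$), precisely when the incoming and outgoing segments at that solenoid point in the same direction, i.e.\ $\gamma$ passes straight through the solenoid without deflection ($\beta=\pm\pi$ in the earlier notation). Hence a diffractive geodesic lifts to a continuous integral curve iff it is a (partially) geometric geodesic, and conversely the base projection of a continuous integral curve of $\hat{\mathbf{H}}_{\aalpha}$ is a geometric geodesic. Combined with part (1), this gives (2).

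The only genuine subtlety — and the place I would be careful — is the behavior over $S$: since $S^*X$ is a bundle over $X=\RRS$, integral curves and lifts literally have no value over the solenoids, so one must phrase the entire correspondence in terms of one-sided limits and check that the bookkeeping matches up exactly: a ``jump over a single point of $S$'' in the definition of diffractive relation versus a ``vertex of the polygonal trajectory'' in the definition of a diffractive geodesic, and a ``continuous integral curve'' (removable discontinuity over $S$) versus ``passing through a solenoid without being deflected.'' Everything else is a routine unraveling of definitions.
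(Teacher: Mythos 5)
Your proof is correct and in fact provides the detailed definitional unwinding that the paper omits entirely: the proposition is stated without proof, being treated as an immediate consequence of the definitions together with the structural fact that integral curves of $\hat{\textbf{H}}_{\aalpha}$ project to unit-speed polygonal trajectories. The one place you should be slightly careful, which you half-notice, is that the paper's definition of a \emph{(partially) geometric geodesic} requires only \emph{at least one} non-deflected passage through a solenoid, whereas a \emph{continuous} integral curve over $S$ forces \emph{every} passage along it to be non-deflected; your sentence ``Hence a diffractive geodesic lifts to a continuous integral curve iff it is a (partially) geometric geodesic'' is true only under the stronger reading, which is evidently the reading the proposition intends. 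This is an imprecision you inherit from the paper's definitions rather than a flaw you introduce, and it is harmless for the rest of the paper since the standing no-three-collinear assumption excludes geometric geodesics altogether.
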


\begin{remark}
Due to the above proposition, we sometimes use $\gamma$ to denote either a diffractive geodesic in $X$ or a lifted diffractive geodesic in $S^*X$ when there is no ambiguity. When necessary, we use $\gamma^{\mathfrak{b}}(s)$ to denote the projection of $\gamma(s)\in S^*X$ to the base $X$ for lifted diffractive geodesic $\gamma$, where the upper-right index $\mathfrak{b}$ denotes the projection to the base.
\end{remark}

\subsection{Propagation of singularities}
Before ending this section, we present a result of propagation of diffractive singularities of the Aharonov--Bohm wave propagator in $\RR^2$ with one singular vector potential \cite{yang2020ab}, which essentially states that the diffractive singularities propagate along the diffractive geodesics and are conormal.  
\begin{theorem}
Consider the wave equation 
$$\Box u=0,\ u\rvert_{t=0}=0,\ \partial_t u\rvert_{t=0}=f$$
with the vector potential $\Vec{A}=-\alpha\cdot\left(-\frac{y}{x^2+y^2},\frac{x}{x^2+y^2}\right)^{T}$ where $\alpha\in(0,1)$. Under polar coordinates, define $E(t,r,\theta,r',\theta')$ to be the Schwartz kernel of the sine propagator $\frac{\sin t\sqrt{\pa}}{\sqrt{\pa}}$. Then for $t>r'$, $|\theta-\theta'|\neq\pi$ and near $\{t=r'+r\}$, $E(t,r,\theta,r',\theta')$ is a conormal distribution with respect to $\{t=r'+r\}$.
\end{theorem}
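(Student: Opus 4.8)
The plan is to reduce the assertion to an explicit computation and then read off the conormal structure. Since the vector potential $\vec A=-\alpha\,r^{-1}\hat\theta$ is rotationally invariant, $\pa$ commutes with the rotations of $\RRR$, so $E$ depends only on $t$, $r$, $r'$ and $\psi:=\theta-\theta'$. Separating variables, on the Fourier mode $e^{ik\psi}$ the operator $\pa$ acts as the Bessel operator $L_{\nu_k}=-\partial_r^2-r^{-1}\partial_r+\nu_k^2 r^{-2}$ on $L^2(\mathbb{R}_+,r\,dr)$ with $\nu_k=|k-\alpha|$, the Friedrichs extension being the one diagonalized by the Hankel transform in $J_{\nu_k}$, so that
\begin{equation*}
E(t,r,\theta,r',\theta')=\frac{1}{2\pi}\sum_{k\in\mathbb{Z}}e^{ik\psi}\int_0^\infty \sin(t\lambda)\,J_{\nu_k}(\lambda r)\,J_{\nu_k}(\lambda r')\,d\lambda.
\end{equation*}
Equivalently --- the viewpoint I would actually use --- one lifts to the universal cover of $\RRR$, an infinite flat cone on which the gauge transformation $u\mapsto e^{i\alpha\theta}u$ is single-valued and removes $\vec A$; then $E$ is the sum over the deck group $\mathbb{Z}$ of the free sine-propagator kernel of that cone, each summand weighted by a power of the holonomy $e^{2\pi i\alpha}$ and a parallel-transport phase.

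Next I would invoke the classical closed form of the individual integrals $\int_0^\infty\sin(t\lambda)J_\nu(\lambda r)J_\nu(\lambda r')\,d\lambda$ --- a Lipschitz--Hankel integral, expressible through a Legendre function $Q_{\nu-1/2}$ of the argument $(t^2-r^2-r'^2)/(2rr')$ --- together with the Sommerfeld--Carslaw contour representation to resum the series in $k$; this is the device used by Cheeger--Taylor for the cone and by Hillairet and Ford--Wunsch for conic surfaces. The outcome, in a neighborhood of $\{t=r+r'\}$ intersected with $\{|\psi|\neq\pi\}$, is a decomposition $E=E_{\mathrm{geom}}+E_{\mathrm{diff}}$ modulo $\CI$, where $E_{\mathrm{geom}}$ is, modulo smooth terms, a smooth multiple (a parallel-transport phase) of the free two-dimensional sine-propagator kernel $(2\pi)^{-1}H(t-\ell_\psi)(t^2-\ell_\psi^2)^{-1/2}$ along the straight segment of length $\ell_\psi=(r^2+r'^2-2rr'\cos\psi)^{1/2}$ joining the two points without meeting the origin (such a segment exists precisely because $|\psi|\neq\pi$), and $E_{\mathrm{diff}}$, supported in $\{t\geq r+r'\}$ by finite propagation speed, is a conormal distribution with respect to $\{t=r+r'\}$,
\begin{equation*}
E_{\mathrm{diff}}=\chi(t,r,r',\psi)\int_{\mathbb{R}}e^{i\lambda(t-r-r')}\,b(t,r,r',\psi;\lambda)\,d\lambda,
\end{equation*}
with $b$ a classical symbol of order $-\tfrac12$ whose leading coefficient is proportional to $\sin(\pi\alpha)$ times a rational function of $\cos\psi$ (matching, for $n=1$, the diffraction coefficient $d_{\gamma,\aalpha}$ of Theorem~\ref{thm1.1}).

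To conclude, fix $t$ near $r+r'$ with $|\psi|\neq\pi$. Then $\ell_\psi<r+r'$ strictly, so the singular support of $E_{\mathrm{geom}}$, namely the geometric light cone $\{t=\ell_\psi\}$, lies at a strictly smaller time; as the free sine kernel is $\CI$ throughout $\{t>\ell_\psi\}$, the term $E_{\mathrm{geom}}$ is smooth in a full neighborhood of $\{t=r+r'\}$ there. The term $E_{\mathrm{diff}}$ is conormal to $\{t=r+r'\}$ by the displayed formula, and hence so is $E$ near that hypersurface. (The hypothesis $t>r'$ guarantees $t>0$ and that the incoming wave has already reached the solenoid so the above picture applies; it also keeps us away from the individual-mode singularity at $t=|r-r'|$.)

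I expect the main obstacle to be the resummation. The $k$-sum is only conditionally convergent, so interchanging $\sum_k$ with $\int_0^\infty(\,\cdot\,)\,d\lambda$ and passing to the Sommerfeld contour integral must be justified uniformly near the diffractive front; this rests on uniform asymptotics of $J_{\nu_k}$ as $k\to\infty$ (with $\lambda r$, $\lambda r'$ in compact sets and in the oscillatory regime) and on a delicate analysis at the boundary of $\{|r-r'|<t<r+r'\}$, where the geometric and diffractive descriptions collide and the excluded locus $|\psi|=\pi$ lives. The cleanest way around the conditional convergence should be to run the argument on the universal cover, where the Carslaw contour integral for the free cone kernel converges absolutely; one then only has to check that the deck-group sum converges in the appropriate conormal topology near $\{t=r+r'\}$ and that, for $|\psi|\neq\pi$, each cover preimage emits its geometric singularity at a time strictly below $r+r'$ --- a purely geometric check. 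With the decomposition in hand, extracting the symbol $b$ and its leading $\sin(\pi\alpha)$ coefficient is then a routine Legendre-function/stationary-phase computation.
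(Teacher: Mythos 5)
The paper does not contain a proof of this statement: it is imported verbatim from the author's prior work \cite{yang2020ab} (Theorem~6.1 there), and the in-paper Proposition~3.1 that builds on it again defers the proof to that reference, so there is no in-paper argument to compare against directly. Your proposal follows the standard Cheeger--Taylor separation-of-variables and Carslaw--Sommerfeld resummation framework --- the same lineage the paper explicitly cites via Hillairet and Ford--Wunsch --- and is almost certainly the route used in \cite{yang2020ab}: expand in angular Fourier modes, diagonalize the resulting Bessel operators via Hankel transforms, evaluate the radial integrals as Lipschitz--Hankel/Legendre-$Q$ kernels, and resum by lifting to the universal cover where the gauge transformation $e^{i\alpha\theta}$ removes $\Vec{A}$ and the Carslaw contour integral is available. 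The decomposition $E=E_{\mathrm{geom}}+E_{\mathrm{diff}}$ modulo $\CI$ is the correct one, the observation that $|\theta-\theta'|\neq\pi$ forces the geometric front $t=\ell_\psi$ strictly below $t=r+r'$ is exactly the right use of the hypothesis, and your identification of the conditional convergence of the mode sum as the chief obstacle --- to be circumvented by working with the absolutely convergent contour integral on the cover and then summing over the deck group --- is the correct repair. One cosmetic point: with $A_\theta=-\alpha/r$, the Bessel index on the mode $e^{ik\theta}$ is $\nu_k=|k+\alpha|$ rather than $|k-\alpha|$; since $k$ ranges over $\mathbb{Z}$ the two index sets coincide and nothing downstream changes, but it is worth fixing when extracting the $\sin(\pi\alpha)$ coefficient to make sure the leading symbol matches \eqref{d-coeff}.
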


We are particularly interested in closed geodesics when computing the trace ``$\Tr U_{\alpha}(t)$". To avoid the appearance of the geometric geodesic in a closed general geodesic, we assume that there are no three solenoids collinear. Therefore, there is no closed geometric geodesic in $X$; all closed generalized geodesics are closed strictly diffractive geodesics.

\section{Diffractive propagation under a single diffraction}
\label{single_d}
By the finite speed of propagation, the wave emanating from points close to a solenoid, for example $s_1$, can only generate one diffraction for time $t$ small enough. Therefore, in this section, we consider the diffractive wave of the propagator $U_{\aalpha}(t)$ when it only undergoes a single diffraction.

First, we start with $U_{\alpha_1}(t)=e^{it\sqrt{P_{\alpha_1}}}$, which is the propagator corresponding to the Hamiltonian $P_{\alpha_1}$ with one singular vector potential $\Vec{A}_1$ defined by the equation \eqref{potential}. In terms of polar coordinates around the solenoid $s_1$, we have the following proposition regarding the Schwartz kernel $U_{\alpha_1}(t,r_1,\theta_1,r_2,\theta_2)$:
\begin{proposition}
For $\lvert \theta_1 - \theta_2 \rvert \neq \pi$, i.e., away from the geometric wave front, the diffractive part of the wave propagator $U_{\alpha_1}(t)$ of $P_{\alpha_1}$ is a conormal distribution with respect to $\{t=r_1+r_2\}$; locally near $\{t=r_1+r_2\}$, it admits the form: 
\begin{equation*}
     \int_{\RR}e^{i\lambda(t-r_2-r_1)}\tilde{c}(t,r_1,\theta_1,r_2,\theta_2;\lambda)d\lambda
\end{equation*}
whose symbol $\tilde{c}\in S^0(\RR\times X\times X; \RR_{\lambda})$ is 
\begin{equation*}
\rho(\lambda) \cdot \sin(\pi\alpha_1) \cdot \frac{(r_1r_2)^{-\frac{1}{2}}}{2\pi} \cdot \frac{e^{-i\theta_1}+e^{i\theta_2}}{\cos\theta_1+\cos\theta_2}
\end{equation*}
modulo terms in $S^{-\frac{1}{2}+0}$, where $\rho(\lambda)\in \CI(\RR_{\lambda})$ is a smooth function satisfying $\rho\equiv 0$ for $\lambda<0$ and $\rho\equiv 1$ for $\lambda>1$.
\end{proposition}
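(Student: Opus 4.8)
The plan is to reduce the computation to an explicit known formula for the Aharonov--Bohm propagator with a single solenoid and then extract the leading behavior of its symbol near the diffractive front $\{t = r_1 + r_2\}$. First I would recall the separation-of-variables/spectral representation for $P_{\alpha_1}$ on $\RRS$ with one solenoid: decomposing $L^2$ into Fourier modes $e^{ik\theta}$ in the angular variable, the radial part of $P_{\alpha_1}$ is a Bessel-type operator with effective angular momentum $|k - \alpha_1|$ (this is where the flux $\alpha_1$ enters, and where $\alpha_1 \in (0,1)$ matters). The Schwartz kernel of $e^{it\sqrt{P_{\alpha_1}}}$ is then a sum over $k$ of products of Bessel functions $J_{|k-\alpha_1|}$ against an oscillatory factor in $t$; this is precisely the kind of series that appears in the work on propagation of diffractive singularities for one solenoid referenced in the excerpt. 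The key structural input is the companion theorem quoted just above: for $t > r_2$, $|\theta_1 - \theta_2| \neq \pi$, the kernel is conormal with respect to $\{t = r_1 + r_2\}$, so it has an oscillatory-integral representation $\int_\RR e^{i\lambda(t - r_1 - r_2)} \tilde{c}\, d\lambda$ with $\tilde{c} \in S^0$; what remains is to compute the principal symbol $\tilde{c}$ modulo $S^{-1/2+0}$.

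Next I would carry out the symbol computation. The most efficient route is to pass to the resolvent or the half-wave kernel in the spectral variable $\lambda$ and use the large-$\lambda$ (high-frequency) asymptotics of the Bessel functions. Summing the resulting asymptotic series over the angular modes $k \in \mathbf{Z}$ and isolating the stationary-phase/endpoint contribution at the diffractive front produces a closed form. The mechanism by which the sum over $k$ collapses to a simple trigonometric expression is the classical one underlying the Sommerfeld--Carslaw solution for diffraction by a wedge: the series $\sum_k e^{ik(\theta_1 - \theta_2)} \cdot(\text{phase}/\text{amplitude in }k)$ can be resummed via a contour integral, and the diffracted wave picks up the residue contribution, yielding a factor of the form $\sin(\pi\alpha_1)$ times a quotient of the type $\frac{e^{-i\theta_1} + e^{i\theta_2}}{\cos\theta_1 + \cos\theta_2}$. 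Concretely: I would write the one-solenoid kernel using the known Aharonov--Bohm Green's function / Carslaw-type integral representation, strip off the geometric (line-of-sight) piece which is singular only at $|\theta_1 - \theta_2| = \pi$, and Taylor expand the remaining diffractive integrand about the front $t = r_1 + r_2$. The leading term of that expansion gives the stated symbol; the factor $(r_1 r_2)^{-1/2}/(2\pi)$ comes from the half-density normalization and the two-dimensional free-wave amplitude $r^{-1/2}$ at each leg, and $\rho(\lambda)$ simply records that we only care about the $\lambda > 0$ (positive-frequency) part of the half-wave propagator, with the $\lambda < 0$ part smoothing out.

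The main obstacle I anticipate is the bookkeeping in the resummation and in tracking the precise constant and the precise phase $\frac{e^{-i\theta_1} + e^{i\theta_2}}{\cos\theta_1 + \cos\theta_2}$: one must handle the branch/winding conventions for $\theta_1, \theta_2$ consistently (these angles are measured from the incoming and outgoing legs through $s_1$, and $|\theta_1 - \theta_2| \neq \pi$ is exactly the condition that the denominator $\cos\theta_1 + \cos\theta_2$ does not vanish, i.e., that we are off the geometric front), and one must be careful that the contour deformation in the Carslaw integral does not cross the poles that produce the geometric wave. A secondary technical point is justifying that the error after extracting the leading symbol genuinely lies in $S^{-1/2+0}$ and not merely $S^{-1/2}$: this follows because the next correction in the front expansion contributes a half-integer gain in decay in $\lambda$, with the ``$+0$'' absorbing logarithmic factors arising from the angular sum, but it should be spelled out. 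Once the single-solenoid symbol is pinned down in this normalization, the rest of the proposition — conormality and the oscillatory-integral form — is exactly the content of the propagation theorem already quoted, so no further argument is needed there.
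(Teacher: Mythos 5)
Your proposal takes a genuinely different route from the paper. The paper's proof is a one-liner: it cites Theorem 6.1 of \cite{yang2020ab}, which already gives the conormal oscillatory-integral form and the explicit symbol for the \emph{sine} propagator $\frac{\sin t\sqrt{P_{\alpha_1}}}{\sqrt{P_{\alpha_1}}}$, and then converts to the half-wave propagator $e^{it\sqrt{P_{\alpha_1}}}$ via $e^{it}=\cos t+i\sin t$, i.e.\ by applying $\partial_t + i\sqrt{P_{\alpha_1}}$, with the microlocality of $\sqrt{P_{\alpha_1}}$ (Proposition \ref{ap_mlty}) justifying the resulting symbol calculus. Your proposal, by contrast, re-derives the explicit symbol from scratch via the Bessel-mode expansion and a Carslaw/Sommerfeld contour resummation, which is essentially the content of the cited Theorem 6.1 itself. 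Your route is plausible in outline and has the pedagogical advantage of exposing where the $\sin(\pi\alpha_1)$ residue factor and the angular quotient come from, but it amounts to re-proving the input rather than applying it.

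There is also a concrete gap in the passage from the companion theorem to the half-wave statement. You call the companion theorem (which concerns $\frac{\sin t\sqrt{P_{\alpha_1}}}{\sqrt{P_{\alpha_1}}}$) the ``key structural input'' establishing conormality, and then conclude that the half-wave kernel has a representation $\int e^{i\lambda(t-r_1-r_2)}\tilde c\,d\lambda$ with $\tilde c\in S^0$. This step is not automatic: going from the sine propagator to $e^{it\sqrt{P_{\alpha_1}}}$ requires applying $i\sqrt{P_{\alpha_1}}$, and $\sqrt{P_{\alpha_1}}$ is only defined by the functional calculus for a Friedrichs extension with domain conditions at the solenoid, so it is not evidently a local or even microlocal operator. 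That it nonetheless acts microlocally as a first-order pseudodifferential operator away from the solenoid --- preserving conormality and shifting the symbol order appropriately --- is precisely what Proposition \ref{ap_mlty} in the appendix establishes, and your argument neither invokes it nor avoids the need for it. (You could sidestep it by carrying out the Bessel computation directly for $e^{it\sqrt{P_{\alpha_1}}}$ and establishing conormality by direct estimate on the series, but then the cite to the companion theorem is vestigial and the conormality claim needs its own justification.) As written, this is a genuine missing step.
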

\begin{proof}
This is a direct corollary of \cite[Theorem 6.1]{yang2020ab} using $e^{it}=\cos t+ i\sin t$ and microlocality of $\sqrt{P_{\aalpha}}$ (cf. Proposition \ref{ap_mlty}). 
\end{proof}
We define
\begin{equation*}
    \tilde{d}_{\alpha_1}(\theta_1,\theta_2)\df\sin(\pi\alpha_1) \cdot \frac{e^{-i\theta_1}+e^{i\theta_2}}{\cos\theta_1+\cos\theta_2}
\end{equation*}
to be the \emph{diffraction coefficient} at the solenoid $s_1$, which is independent of $r_1,r_2$ and $\lambda$.
\begin{remark}
By the identity
$$\frac{e^{-i\theta_1}+e^{i\theta_2}}{\cos\theta_1+\cos\theta_2}=\frac{e^{-i\left(\frac{\theta_2-\theta_1}{2}\right)}}{\cos(\frac{\theta_2-\theta_1}{2})},$$
$\tilde{d}_{\alpha}(\theta_1,\theta_2)$ is invariant for fixed $\beta=\theta_2-\theta_1$ which is the \emph{diffraction angle}. Henceforth, we use the notation $\tilde{d}_{\alpha}(\beta)$ with the diffraction angle $\beta$ instead.
\end{remark}

Now we consider diffraction of the propagator $e^{it\sqrt{P_{\aalpha}}}$ near one solenoid $s_k\in S$ with the singular vector potential $\Vec{A}=\sum_{i=1}^n\Vec{A}_i$. Due to the long range effect of singular vector potentials $\Vec{A}_i$ with $i\neq k$, we need to introduce a gauge transformation to offset this effect without changing the singular vector potential $\Vec{A}_k$ generated by the solenoid where the diffraction happens. 

Consider the Hamiltonian
\begin{equation*}
    P_{\aalpha}=\left(\frac{1}{i}\nabla-\Vec{A}\right)^2
\end{equation*}
with $\Vec{A}=\sum_{i=1}^{n}\Vec{A}_i$ where $\Vec{A_i}$ given by equation \eqref{potential}. Taking complex coordinates in $\RR^2$ with $z=x+iy$, we introduce an angular function $\phi_i$ with respect to the solenoid $s_i$ as 
\begin{equation}
\label{ang_i}
    \phi_i(x,y)=\arg (z-s_i).
\end{equation}

\begin{remark}
\label{angle}
We should note that although $\phi_i(z)$ is defined as a sheaf or, in other words, defined on the logarithmic covering of $\RR^2\backslash\{s_i\}$, it still can be used invariantly since both its gradient $\nabla\phi_i(x,y)$ and the diffraction angle $\phi_i(x_2,y_2)-\phi_i(x_1,y_1)$ are invariantly well-defined. The diffraction coefficient and the choice of gauge really depends on the gradient or the difference rather than $\phi_i$ itself.  
\end{remark}

Note $\Vec{A_i}$ is therefore given by the gradient of the angle function $\phi_i(x,y)$ and the magnetic flux $\alpha_i$: 
\begin{equation}
    \Vec{A}_i=-\alpha_i \nabla \phi_i. 
\end{equation}
Away from the solenoid $s_j\in S$, the angular function $\phi_j$ is smooth and $\Vec{A}_j$ is curl-free. Thus the gauge transformations by adding a vector potential $\Vec{A}_j$ near $s_k$ for $j\neq k$ does not change the magnetic field at $s_k$.
In particular, near the $k$-th solenoid $s_k$ the local gauge transformations yield the relation:
\begin{equation}
    \label{gauge}
    \left(\frac{1}{i}\nabla-\Vec{A}\right)^2=e^{-i\left(\sum_{j\neq k}\alpha_j\phi_j\right)}\left(\frac{1}{i}\nabla-\Vec{A_k}\right)^2e^{i\left(\sum_{j\neq k}\alpha_j\phi_j\right)}.
\end{equation}
Note that the propagator $U_{\aalpha}(t)$ is given by the solution operator to the half-wave equation:
\begin{equation}
    \begin{split}
        (D_t&-\sqrt{P_{\aalpha}})u=0\\
            &u\rvert_{t=0}=f,
    \end{split}
\end{equation}
and the microlocality\footnote{See the Appendix for a brief discussion on microlocality of $\sqrt{P_{\aalpha}}$} of $\sqrt{P_{\aalpha}}$ allows us to consider the propagator $U_{\aalpha}(t)$ (micro)locally. Therefore, locally near the diffraction at $s_k$, the propagator $U_{\aalpha}(t)$ is related to $U_{\alpha_k}(t)$ by conjugation by a phase-shift term as in \eqref{gauge}:
$$U_{\aalpha}(t)=e^{-i\left(\sum_{j\neq k}\alpha_j\phi_j\right)} U_{\alpha_k}(t) e^{i\left(\sum_{j\neq k}\alpha_j\phi_j\right)}.$$
Summarizing the result in terms of the Schwartz kernel, we obtain the following proposition:
\begin{proposition}
\label{prop_pgtr1}
For $\lvert \theta_1 - \theta_2 \rvert \neq \pi$, the diffractive part of the propagator $U_{\aalpha}(t)$ of $P_{\aalpha}$ near the $k$-th solenoid $s_k$ is a conormal distribution with respect to $\{t=r_1+r_2\}$; locally near  $\{t=r_1+r_2\}$, it admits a Fourier integral expression:
\begin{equation}
\label{pgtr1}
     \int_{\RR}e^{i\lambda(t-r_2-r_1)}c(t,r_1,\theta_1,r_2,\theta_2;\lambda)d\lambda
\end{equation}
where the symbol $c\in S^0(\RR\times X\times X; \RR_{\lambda})$ is 
\begin{equation}
\label{symbol_pgtr1}
   \frac{(r_1r_2)^{-\frac{1}{2}}}{2\pi}\cdot\tilde{d}_{\alpha_k}(\beta)\cdot e^{-i\left(\sum_{j\neq k}\alpha_j\cdot\left(\phi_j(r_2,\theta_2)-\phi_j(r_1,\theta_1)\right)\right)}\cdot \rho(\lambda)
\end{equation}
modulo elements in $S^{-\frac{1}{2}+0}$, where $\rho(\lambda)\in \CI(\RR_{\lambda})$ is a smooth function satisfying $\rho\equiv 0$ for $\lambda<0$ and $\rho\equiv 1$ for $\lambda>1$.
\end{proposition}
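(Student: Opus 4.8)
The plan is to deduce Proposition \ref{prop_pgtr1} directly from the single-solenoid computation recorded in the previous Proposition (with its diffraction coefficient $\tilde d_{\alpha_1}(\beta)$) together with the gauge identity \eqref{gauge}. First I would fix attention on a small neighborhood $\Omega$ of the $k$-th solenoid $s_k$, chosen small enough that it contains none of the other solenoids $s_j$, $j\neq k$, and small enough that the finite speed of propagation confines the diffractive interaction we are computing to a single diffraction at $s_k$ during the time interval under consideration; on $\Omega\setminus\{s_k\}$ each angular function $\phi_j$ with $j\neq k$ is smooth and $\Vec A_j=-\alpha_j\nabla\phi_j$, so the conjugation identity \eqref{gauge} holds on $\Omega\setminus\{s_k\}$ as an identity of operators, and hence, by uniqueness of solutions to the half-wave equation and microlocality of $\sqrt{P_{\aalpha}}$ (Proposition \ref{ap_mlty} in the Appendix), the propagators satisfy $U_{\aalpha}(t)=e^{-i\Phi_k}\,U_{\alpha_k}(t)\,e^{i\Phi_k}$ microlocally near the relevant part of the wavefront, where $\Phi_k\df\sum_{j\neq k}\alpha_j\phi_j$.

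Next I would pass to Schwartz kernels. Conjugation by the multiplication operator $e^{i\Phi_k}$ multiplies the kernel $U_{\alpha_k}(t,r_1,\theta_1,r_2,\theta_2)$ by $e^{-i\Phi_k(r_1,\theta_1)}e^{i\Phi_k(r_2,\theta_2)}$; writing this out gives exactly the factor
\begin{equation*}
e^{-i\left(\sum_{j\neq k}\alpha_j\left(\phi_j(r_1,\theta_1)-\phi_j(r_2,\theta_2)\right)\right)}
=e^{-i\left(\sum_{j\neq k}\alpha_j\left(\phi_j(r_2,\theta_2)-\phi_j(r_1,\theta_1)\right)\right)}^{-1},
\end{equation*}
so one must be careful with the sign convention; matching against the stated formula \eqref{symbol_pgtr1} fixes which of $\phi_j(r_1,\theta_1)-\phi_j(r_2,\theta_2)$ or its negative appears. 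Since each $\phi_j$ is smooth on $\Omega\setminus\{s_k\}$, the prefactor $e^{-i\Phi_k(r_1,\theta_1)+i\Phi_k(r_2,\theta_2)}$ is a smooth, nonvanishing function of $(r_1,\theta_1,r_2,\theta_2)$ on the region $|\theta_1-\theta_2|\neq\pi$; multiplying a conormal distribution with respect to $\{t=r_1+r_2\}$ by such a smooth function leaves it conormal with respect to the same submanifold, and at the level of the symbol it simply multiplies the amplitude $\tilde c$ (which is independent of the smooth prefactor) by that function. Combining with $\tilde c\equiv \rho(\lambda)\,\tilde d_{\alpha_k}(\beta)\,(r_1r_2)^{-1/2}/(2\pi)$ modulo $S^{-1/2+0}$ from the previous Proposition yields the claimed symbol \eqref{symbol_pgtr1}, with the error term remaining in $S^{-1/2+0}$ because it is multiplied by the same bounded smooth factor.

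The main obstacle, and the point requiring the most care, is the justification that the local gauge conjugation genuinely descends to the propagator $U_{\aalpha}(t)$ rather than merely to $P_{\aalpha}$: one needs to know that the diffractive part of $U_{\aalpha}(t)$ near $s_k$, for the time window in question, is determined by the operator only in the neighborhood $\Omega$, which is where microlocality of $\sqrt{P_{\aalpha}}$ and finite propagation speed enter, and one must check that the multivaluedness of the $\phi_j$ (they are only defined on the logarithmic cover, per Remark \ref{angle}) does not obstruct this — it does not, because on the simply connected set $\Omega\setminus\{s_k\}$, or at least along the geodesic segments contributing to the diffractive front, the \emph{difference} $\phi_j(r_2,\theta_2)-\phi_j(r_1,\theta_1)$ is single-valued and it is only this difference that appears. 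Once these points are in place the remaining steps are routine bookkeeping with conormal symbols, so I would state the neighborhood/microlocality reduction carefully and then present the kernel computation briefly.
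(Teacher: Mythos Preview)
Your proposal is correct and follows essentially the same approach as the paper: the paper derives Proposition~\ref{prop_pgtr1} exactly by invoking the gauge identity~\eqref{gauge}, using microlocality of $\sqrt{P_{\aalpha}}$ (Proposition~\ref{ap_mlty}) to justify that the conjugation descends to the propagator locally near $s_k$, and then reading off the Schwartz kernel as the single-solenoid kernel multiplied by the smooth phase factor. Your more detailed discussion of why conormality is preserved, why only the well-defined angle difference appears, and how the sign convention is fixed, simply spells out what the paper leaves implicit in the sentence ``Summarizing the result in terms of the Schwartz kernel, we obtain the following proposition.''
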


\begin{remark}
As we mentioned in Remark \ref{angle}, the angle difference $\phi_j(r_2,\theta_2)-\phi_j(r_1,\theta_1)$ is well-defined and independent of the choice of $\phi$. 
\end{remark}

\begin{remark}
\label{rmk-pos}
Combining this proposition with the standard propagation of singularities in the smooth setting, we obtain that the singularities propagate along all diffractive geodesics with unity speed.
\end{remark}

Taking both the phase-shift, which is due to the global topological effect, and the diffraction coefficient into account, we define
\begin{equation}
\label{d-coeff}
    d_{\alpha_k}(\beta,z_1,z_2)\df\tilde{d}_{\alpha_k}(\beta)\cdot e^{-i\left(\sum_{j\neq k}\alpha_j\cdot\left(\phi_j(r_2,\theta_2)-\phi_j(r_1,\theta_1)\right)\right)},
\end{equation}
which is independent of $\lambda$. Note that we sometimes omit $z_1,z_2$ and write $d_{\alpha_k}(\beta)$ when there is no ambiguity. We also introduce the following definition:
\begin{definition}
A \emph{diffractive geodesic triple} $(\gamma, \Pi_s, \Pi_e)$ is a lifted diffractive geodesic $\gamma\subset S^*X$ with length $t$ and a pair of pseudodifferential operators $\Pi_s$ and $\Pi_e$ of order zero such that $\gamma(0)\in\WF'\Pi_s$ and $\gamma(t)\in\WF'\Pi_e$.\footnote{$s,\ e$ stand for starting and ending. Also note that $\gamma(0),\gamma(t)$ are always away from the solenoid since $X=\RR^2\backslash S$.}
\end{definition}

Using such a diffractive geodesic triple, we can microlocalize the propagator $U_{\aalpha}(t)$ along the diffractive geodesic $\gamma$. Assume $\gamma$ undergoes one diffraction at $s_k$. Consider a microlocalized propagator $U_{\aalpha, \gamma}(t)\df \Pi_e U_{\aalpha}(t) \Pi_s$ along $\gamma$ associated with the diffractive geodesic triple $(\gamma, \Pi_s, \Pi_e)$. For the diffraction angle $\beta$ different from $\pm\pi$, we can always shrink the size of $\WF'\Pi_s, \WF'\Pi_e$ such that none of their points are geometrically related by time $t$ if necessary. Indeed, in what follows we shall always assume that this is the case if the starting point and end point are not geometrically related. The pseudodifferential operators $\Pi_s, \Pi_e$ are given by 
$$\Pi_s=\frac{1}{(2\pi)^2}\int e^{i(\tilde{z}'-z')\zeta'}a_s(\tilde{z}',\zeta')d\zeta',$$
$$\Pi_e=\frac{1}{(2\pi)^2}\int e^{i(z-\tilde{z})\zeta}a_e(z,\zeta)d\zeta$$
with $a_s,a_e\in S^0_c(T^*X)$. In polar coordinates centered at $s_k$, $z=(r_z,\theta_z)$ and $z'=(r_{z'},\theta_{z'})$. 

\begin{lemma}
\label{lemma_1_diffraction}
For $\lvert \theta_z - \theta_{z'} \rvert \neq \pi$, the microlocalized propagator $U_{\aalpha, \gamma}(t)\df \Pi_e U_{\aalpha}(t) \Pi_s$ associated with the diffractive geodesic triple $(\gamma, \Pi_s, \Pi_e)$ is an \emph{FIO} with the wavefront relation $\WF' \Pi_e U_{\aalpha}(t) \Pi_s$ given by points in $\WF'\Pi_s\times\WF'\Pi_e$ that are diffractively related by time $t$. Under polar coordinates near $s_k$,
\begin{equation}
    \label{canonical}
    \begin{split}
    \WF'{\Pi_e U_{\aalpha}(t) \Pi_s}=\Big\{ &(r_z=t-r_{z'},\theta_z,r_{z'}, \theta_{z'}; -\lambda, 0, \lambda, 0)\in T^*(X\times X) \\ &\ \ \ \restrictedto \  (r_{z'},\theta_{z'};\lambda,0)\in \WF'\Pi_s, (r_{z},\theta_{z};-\lambda,0)\in \WF'\Pi_e \Big\}.
\end{split}
\end{equation}
Moreover, $U_{\aalpha, \gamma}(t)$ locally admits an oscillatory integral form:
\begin{equation}
    \label{FIO_MLpgtr}
    \int e^{i\lambda(t-r_z-r_{z'})}c_{s,e}(t,r_z,r_{z'},\theta_z,\theta_{z'}; \lambda) d\lambda
\end{equation}
where the symbol $c_{s,e}\in S^0(\RR\times X\times X; \RR_{\lambda})$ is given by 
\begin{equation}
\label{sym_MLpgtr1}
     \frac{(r_zr_{z'})^{-\frac{1}{2}}}{2\pi} \cdot d_{\alpha_k}(\theta_z-\theta_{z'})\cdot a_s(z',\lambda\theta_{z'})\cdot a_e(z,-\lambda\theta_z)\cdot \rho(\lambda)
\end{equation}
modulo elements of $S^{-\frac{1}{2}+0}$, with $\rho(\lambda)\in \CI(\RR_{\lambda})$ being a smooth function satisfying $\rho\equiv 0$ for $\lambda<0$ and $\rho\equiv 1$ for $\lambda>1$.
\end{lemma}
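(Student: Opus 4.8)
The plan is to obtain \eqref{FIO_MLpgtr}--\eqref{sym_MLpgtr1} by composing the conormal distribution produced by Proposition~\ref{prop_pgtr1} with the order-zero pseudodifferential operators $\Pi_e$ and $\Pi_s$ on the left and the right, and then to read off the symbol of the composition by the standard stationary-phase (equivalently, $\Psi$DO-acting-on-an-FIO) calculus. Three points need attention: that after microlocalization only the single-diffraction conormal piece of $U_{\aalpha}(t)$ survives; that the composition is nondegenerate, so that the leading symbol is simply the product of the three amplitudes evaluated on the canonical relation; and that the evaluation points are exactly those recorded in \eqref{canonical} and \eqref{sym_MLpgtr1}.

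\emph{Reduction to the conormal piece.} Since $\WF'\Pi_s$ and $\WF'\Pi_e$ may be shrunk as much as we like about $\gamma(0)$ and $\gamma(t)$, the hypothesis $|\theta_z-\theta_{z'}|\neq\pi$ together with the standing assumption that no points of $\WF'\Pi_s,\WF'\Pi_e$ are geometrically related by time $t$ rules out the geometric wave; the strict triangle inequality $|z-z'|<r_z+r_{z'}=t$, valid since $z$ and $z'$ are not collinear with $s_k$ between them, rules out any non-diffracting straight geodesic of length near $t$; and by finite speed of propagation only $s_k$ is reachable in this time. Hence the only generalized geodesics of length $t$ joining the two wavefront supports are the diffractive geodesics through $s_k$ with a single diffraction, and modulo a smoothing operator $\Pi_e U_{\aalpha}(t)\Pi_s$ coincides with $\Pi_e$ composed with the distribution conormal to $\{t=r_1+r_2\}$ of \eqref{pgtr1}--\eqref{symbol_pgtr1} composed with $\Pi_s$. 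In particular, localized near $\gamma$ the wavefront relation of $U_{\aalpha}(t)$ is the diffractive canonical relation, and composing on both sides with $\Psi$DOs intersects it with $\WF'\Pi_s\times\WF'\Pi_e$; in polar coordinates centered at $s_k$ this intersection is precisely the set displayed in \eqref{canonical} (with equality, rather than inclusion, on the locus where $a_s$, $a_e$ and $\sin(\pi\alpha_k)$ are elliptic, which is all that is used later).

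\emph{The composition.} Writing $\Pi_e U_{\aalpha}(t)\Pi_s(z,z')$ out as the iterated oscillatory integral coming from the given representations of $\Pi_e,\Pi_s$ and from \eqref{pgtr1}, the total phase is $\Phi=(z-\tilde z)\cdot\zeta+\lambda(t-r_{\tilde z}-r_{\tilde z'})+(\tilde z'-z')\cdot\zeta'$ and the total amplitude is $a_e(z,\zeta)\,c\,a_s(\tilde z',\zeta')$; I would apply stationary phase in the two blocks $(\tilde z,\zeta)$ and $(\tilde z',\zeta')$ separately. In each block the critical point is nondegenerate with Hessian of the form $\bigl(\begin{smallmatrix}\ast&-I\\-I&0\end{smallmatrix}\bigr)$, hence of signature $0$ and determinant $\pm1$, so the leading term is just the amplitude at the critical point and the two factors $(2\pi)^{2}$ exactly absorb the normalizations of $\Pi_s$ and $\Pi_e$; this is nothing but the action of a $\Psi$DO on an oscillatory integral whose phase has nonvanishing spatial gradient. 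The critical equations give $\tilde z=z$, $\zeta=-\lambda\nabla r_z=-\lambda(\cos\theta_z,\sin\theta_z)$ and $\tilde z'=z'$, $\zeta'=\lambda\nabla r_{z'}=\lambda(\cos\theta_{z'},\sin\theta_{z'})$, which in polar dual coordinates at $s_k$ are precisely the covectors $(-\lambda,0)$ over $z$ and $(\lambda,0)$ over $z'$ appearing in \eqref{canonical}. The surviving phase is $\lambda(t-r_z-r_{z'})$ and the surviving amplitude is $c$ of \eqref{symbol_pgtr1} restricted to $\{\tilde z=z,\ \tilde z'=z'\}$ --- which, using the definition \eqref{d-coeff}, equals $\tfrac{(r_zr_{z'})^{-\frac12}}{2\pi}\,d_{\alpha_k}(\theta_z-\theta_{z'})\,\rho(\lambda)$ --- times $a_s(z',\lambda\theta_{z'})\,a_e(z,-\lambda\theta_z)$; this is exactly \eqref{FIO_MLpgtr}--\eqref{sym_MLpgtr1}. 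For the error, the $S^{-\frac12+0}$ remainder of Proposition~\ref{prop_pgtr1} is carried along unchanged while the lower-order terms of the stationary-phase expansion gain a full integer order, so the composed amplitude is in $S^0$ modulo $S^{-\frac12+0}$; since the output is an oscillatory integral of this type with the stated wavefront relation, $\Pi_e U_{\aalpha}(t)\Pi_s$ is an FIO associated with the canonical relation \eqref{canonical}.

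\emph{Main obstacle.} The only delicate step is the bookkeeping at the two ends, namely recognizing that $a_s$ and $a_e$ must be read off at ``$\lambda\theta_{z'}$'' and ``$-\lambda\theta_z$'', i.e.\ at the outgoing and incoming radial covectors at $z'$ and $z$. This rests on the direction of the Hamilton flow of $D_t-\sqrt{P_{\aalpha}}$ --- which forces the momentum on the incoming leg of $\gamma$ to point radially away from $s_k$ and that on the outgoing leg radially toward $s_k$ --- together with the polar-to-Cartesian conversion that matches these with the entries of \eqref{canonical}, and on checking that the $(2\pi)$-normalizations of $\Pi_s,\Pi_e$ are exactly absorbed, so that the constant $\tfrac1{2\pi}$ in \eqref{sym_MLpgtr1} is the one already present in \eqref{symbol_pgtr1}. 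The nondegeneracy of the phase and the order count are routine.
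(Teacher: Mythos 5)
Your proof is correct and follows essentially the same route as the paper's: both start from the conormal representation of Proposition~\ref{prop_pgtr1}, compose with the explicit oscillatory-integral forms of $\Pi_s,\Pi_e$, apply stationary phase in the intermediate variables with the phase $(\tilde z'-z')\cdot\zeta'+\lambda(t-r_{\tilde z'}-r_{\tilde z})+(z-\tilde z)\cdot\zeta$, and read off the same nondegenerate critical point $\tilde z'=z'$, $\tilde z=z$, $\zeta'=\lambda\hat r_{z'}$, $\zeta=-\lambda\hat r_z$, yielding \eqref{canonical}--\eqref{sym_MLpgtr1}. Your extra care in the reduction step (ruling out the geometric wave and non-diffracting paths via the microlocal supports and the strict triangle inequality) and your explicit check that each block Hessian $\bigl(\begin{smallmatrix}\ast & -I\\ -I & 0\end{smallmatrix}\bigr)$ has signature $0$ and unit determinant are sound refinements of details the paper leaves implicit, but they do not change the argument in any essential way.
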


\begin{proof}
Applying the theory of \text{FIOs}\cite{hormander1971fourier} \cite{duistermaat1972fourier} to the propagator \eqref{pgtr1}, the canonical relation of the diffractive part of $U_{\aalpha}(t,z,z')$ near $s_k$ is given by
\begin{equation*}
    \text{C}=\Big\{ (r_z=t-r_{z'},\theta_z,r_{z'}, \theta_{z'}; -\lambda, 0, \lambda, 0)\in T^*(X\times X)\restrictedto \ \lambda\neq0\Big\}. 
\end{equation*}
In view of the composition of canonical relations, the wavefront relation of the Schwartz kernel of $\Pi_e U_{\aalpha}(t) \Pi_s$ is given by
\begin{equation*}
\begin{split}
    \WF' {\Pi_e U_{\aalpha}(t) \Pi_s}=\Big\{ &(r_z=t-r_{z'},\theta_z,r_{z'}, \theta_{z'}; -\lambda, 0, \lambda, 0)\in T^*(X\times X) \\ &\ \ \ \restrictedto \  (r_{z'},\theta_{z'};\lambda,0)\in \WF'\Pi_s, (r_{z},\theta_{z};-\lambda,0)\in \WF'\Pi_e \Big\}. 
\end{split}
\end{equation*}
Now we apply the stationary phase lemma to compute the local expression of the composition. Composing the Schwartz kernels yields
\begin{equation*}
    \Pi_e U_{\aalpha}(t) \Pi_s=\frac{1}{(2\pi)^4}\int e^{i\phi}a_e(z,\zeta)c(t,\tilde{z}',\tilde{z}; \lambda)a_s(\tilde{z}',\zeta')d\lambda d\tilde{z}'d\tilde{z} d\zeta'd\zeta
\end{equation*}
where $\phi=(\tilde{z}'-z')\cdot\zeta'+\lambda(t-r_{\tilde{z}'}-r_{\tilde{z}})+(z-\tilde{z})\cdot\zeta$. Since the critical points of the phase function are
\begin{equation*}
    \tilde{z}'=z',\ \tilde{z}=z,\ \zeta'=\lambda \theta_{z'},\ \zeta=-\lambda\theta_{z},
\end{equation*}
applying method of stationary phase in $(\tilde{z}',\tilde{z}, \zeta', \zeta)$-variables yields
\begin{equation}
    \Pi_e U_{\aalpha}(t) \Pi_s= \int e^{i\lambda(t-r_z-r_{z'})}c_{s,e}(t,r_z,r_{z'},\theta_z,\theta_{z'}; \lambda) d\lambda
\end{equation}
where the symbol $c_{s,e}\in S^0(\RR\times X\times X; \RR_{\lambda})$ is given by 
\begin{equation}
    a_e(z;-\lambda\theta_z) \cdot c(t, z',z; \lambda) \cdot a_s(z';\lambda\theta_{z'})\cdot \rho(\lambda)
\end{equation}
modulo elements in $S^{-\frac{1}{2}+0}$.
\end{proof}

\section{Microlocalized propagation under multiple diffractions}
\label{sec_mlpg}
In this section, we construct a microlocalized propagator $\Pi_e U_{\aalpha,\gamma}(t) \Pi_s$ associated with a diffractive geodesic triple $(\gamma, \Pi_s, \Pi_e)$ under multiple diffractions. 

Recall by definition, for the diffractive geodesic $\gamma\subset T^*X$ with length $t$, $\Pi_s$ and $\Pi_e$ are $0$-th order pseudodifferential operators such that $\gamma(0)\in \WF' \Pi_s$ and $\gamma(t)\in \WF' \Pi_e$. In addition, we can take $\Pi_s,\Pi_e\in\Psi^0_c(X)$ such that singular supports $\WF' \Pi_s$ and $\WF' \Pi_e$ can be as small as we want. To construct the microlocalized propagator, we want to insert a microlocal cutoff between each diffraction along the diffractive geodesic $\gamma$. Moreover, we need the principal symbols of these interim pseudodifferential operators to be identically $1$ near the forward/backward flow-out of $\WF'\Pi_s$/$\WF'\Pi_e$ for our purposes. 

Assume there are $n\in\mathbb{N}$ diffractions along $\gamma$ at times $0<S_1<S_2<\cdots<S_n<t$. Take $T_i\in (S_i,S_{i+1})$ for $1\leq i\leq n-1$ and $T_0=S_0=0$. Define $t_i\df T_i-T_{i-1}$ to be the propagation time between $(T_{i-1},T_i)$. We first construct $\Pi_1$ near $\gamma(T_1)$; the construction of the rest intermediate microlocalizers $\Pi_i$ can be carried out inductively.

Consider two sufficiently small neighborhoods $U_1, V_1$ such that $\pi_L \big(\WF' {U_{\aalpha}(T_1)\circ\Pi_s}\big)\cap \gamma\subset U_1\subset V_1\subset T^*X$, where $\pi_L: T^*(X\times X)\rightarrow T^*(X)$ is the projection to the first copy of $T^*X\subset T^*(X\times X)$. We can therefore choose a $0$-th order pseudodifferential operator $\Pi_1$ such that its principal symbol $\sigma_0(\Pi_1)\restrictedto_{U_1}\equiv 1$ and $\sigma_0(\Pi_1)\restrictedto_{V_1^{c}}\equiv 0$. Assuming $\Pi_{k-1}$ has been constructed, $\Pi_k$ can be constructed similarly such that $\sigma_0(\Pi_k)\restrictedto_{U_k}\equiv 1$ and $\sigma_0(\Pi_k)\restrictedto_{V_k^c}\equiv 0$, where $U_k, V_k$ are neighborhoods of $\pi_L \big(\WF' {U_{\aalpha}(T_k)\circ\Pi_{k-1}}\big)\cap \gamma$. The \emph{microlocalized propagator} associated with a diffractive geodesic triple $(\gamma,\Pi_s,\Pi_e)$ is thus defined as 
\begin{equation}
\label{ML-pgtr}
\Pi_e U_{\aalpha,\gamma}(t) \Pi_s \df \Pi_e U_{\aalpha}(t-T_{n-1})\Pi_{n-1}\cdots\Pi_1 U_{\aalpha}(t_1) \Pi_s.
\end{equation}

\begin{remark}
As we shall see later, the definition of microlocalized propagators is independent of the interim microlocalizers modulo smoothing operators. In particular, it only depends on the initial and the final microlocal cutoffs. 
\end{remark}

\begin{figure}[H]
  \includegraphics[width=0.95\linewidth]{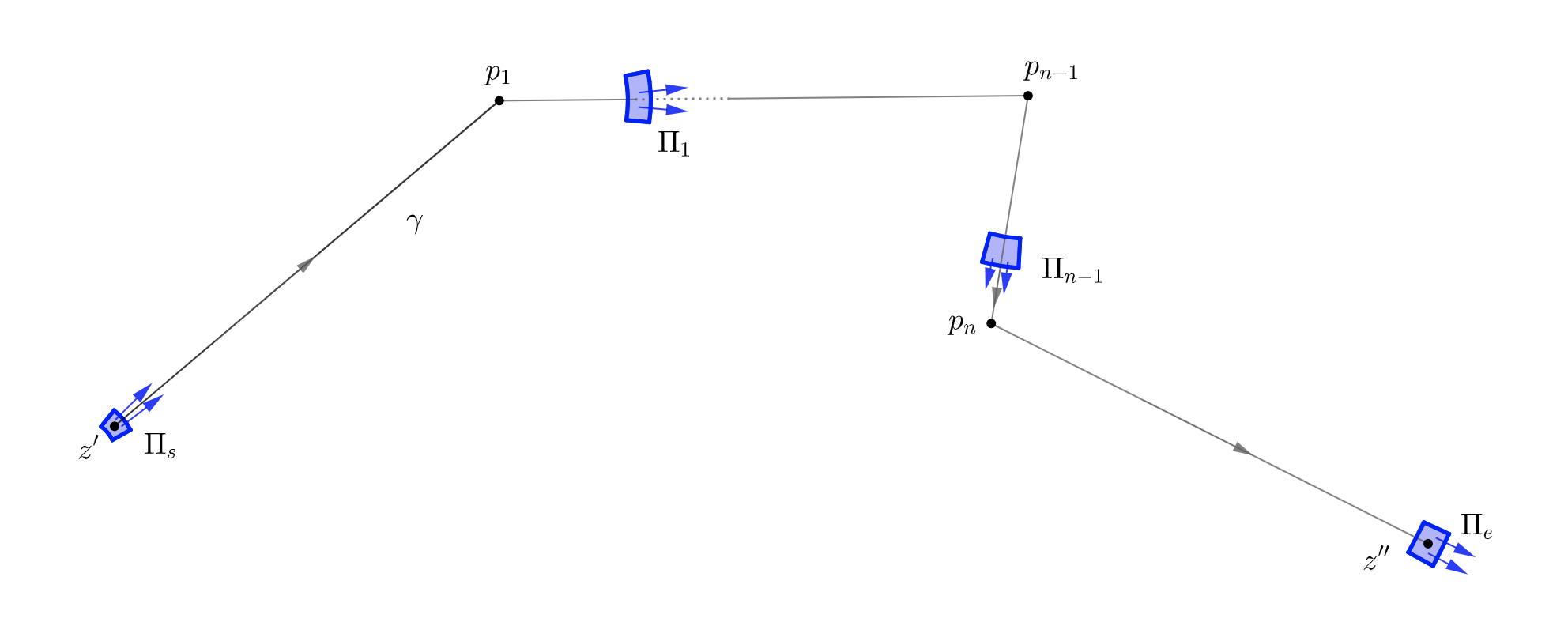}
  \caption{Microlocalized propagator $\Pi_e U_{\aalpha,\gamma}(t) \Pi_s$}
  \medskip
  \small
  \label{Fig2}
\end{figure}

Now we compute the microlocalized propagator associated with $(\gamma,\Pi_s,\Pi_e)$ using composition of FIOs. We first introduce some notations and conventions. Assume $\gamma$ has $n$ diffraction points $p_1,\cdots, p_n$ and total length $L$. We take polar coordinates around $p_1$ with the starting point $z'=(r_{z'}, \theta_{z'})$ and polar coordinates around $p_n$ with the end point $z''=(\bar{r}_{z''}, \bar{\theta}_{z''})$. Assume the $i$-th diffraction at $p_i$ has diffraction angle $\beta_i$ and the distance between $p_i$ and $p_{i+1}$ is $l_i$. We also define the following notations:
\begin{equation*}
    d_{\gamma,\aalpha}\df \prod_{i=1}^{n}d_{\alpha_i}(\beta_i),\  l_{\gamma,\aalpha} \df r_{z'}\left(\prod_{i=i}^{n-1}l_i\right)\bar{r}_{z''}.
\end{equation*}

\begin{proposition}
\label{prop_pgtr2}
The Schwartz kernel of the microlocalized propagator $\Pi_e U_{\aalpha,\gamma}(t) \Pi_s$ associated with $(\gamma,\Pi_s,\Pi_e)$, which is defined in \eqref{ML-pgtr}, is a conormal distribution with respect to $\{t=r_{z'}+\sum_{i=1}^{n-1}l_i+\bar{r}_{z''}\}$. Locally near $\{t=r_{z'}+\sum_{i=1}^{n-1}l_i+\bar{r}_{z''}\}$, it admits an oscillatory integral representation:
\begin{equation}
\label{int_pgtr2}
    U_{\aalpha,\gamma}(t,z'',z')= \int_{\RR}e^{i\lambda\left(t-r_{z'}-\sum_{i=1}^{n-1}l_i-\bar{r}_{z''}\right)}b(t,\bar{r}_{z''},r_{z'},\bar{\theta}_{z''},\theta_{z'};\lambda)d\lambda
\end{equation}
with the symbol $b\in S_c^{-\frac{n-1}{2}}(\RR\times X\times X; \RR_{\lambda})$ given by 
\begin{equation}
\label{symb_pgtr2}
    (2\pi)^{\frac{n-3}{2}} \cdot e^{-i\frac{(n-1)\pi}{4}} \cdot \frac{d_{\gamma,\aalpha}}{l_{\gamma,\aalpha}^{\frac{1}{2}}} \cdot a_s(z';\lambda\theta_{z'}) \cdot a_e(z'';-\lambda\bar{\theta}_{z''}) \cdot \rho(\lambda) \cdot \lambda^{-\frac{n-1}{2}}
\end{equation}
modulo elements of $S^{-\frac{n}{2}+0}$, where $\rho(\lambda)\in \CI(\RR_{\lambda})$ is a smooth function satisfying $\rho\equiv 0$ for $\lambda<0$ and $\rho\equiv 1$ for $\lambda>1$.
\end{proposition}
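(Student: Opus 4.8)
The plan is to iterate Lemma \ref{lemma_1_diffraction} $n$ times, composing the single-diffraction FIOs along the diffractive geodesic $\gamma$, and to track the symbol through each composition via stationary phase. First I would set up the composition
\[
\Pi_e U_{\aalpha}(t-T_{n-1})\Pi_{n-1}\cdots\Pi_1 U_{\aalpha}(t_1)\Pi_s
\]
by inserting between consecutive factors $U_{\aalpha}(t_{k+1})\Pi_k U_{\aalpha}(t_k)$ a pair of polar-coordinate charts centered at $p_k$ and $p_{k+1}$, so that the $k$-th factor has the local oscillatory form of \eqref{FIO_MLpgtr}. Because the principal symbol of each $\Pi_k$ is identically $1$ on a neighborhood $U_k$ containing the relevant piece of $\WF'\bigl(U_{\aalpha}(T_k)\circ\Pi_{k-1}\bigr)\cap\gamma$, the interim cutoffs contribute only the factor $1$ on the nose at the critical point, so they drop out of the principal symbol computation (this also proves the independence-of-interim-cutoffs claim in the preceding remark, modulo smoothing). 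The net phase of the composed oscillatory integral, after changing variables, is $\sum_k \lambda_k(\tau_k - r_k - r_k')$ plus the coupling terms identifying the outgoing endpoint of segment $k$ with the incoming endpoint of segment $k+1$.

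The key step is then a stationary phase computation in the intermediate spatial and frequency variables, together with the intermediate propagation-time and dual-time variables. Each composition of two FIOs in one dimension's worth of frequency contributes one power of $(2\pi)^{1/2}$, one factor $e^{-i\pi/4}$ (the signature of the Hessian being $\pm 1$ at each such composition — here the relevant Hessian in the time/dual-time pairing is nondegenerate with signature $-1$, consistent with the sign conventions in \eqref{symb_pgtr2}), and one power of $\lambda^{-1/2}$ coming from the large parameter in the stationary phase expansion; doing this $n-1$ times (there are $n$ segments but $n-1$ interior junctions, after the two endpoint contributions $a_s, a_e$ are already present) yields the prefactor $(2\pi)^{(n-3)/2}\cdot e^{-i(n-1)\pi/4}\cdot\lambda^{-(n-1)/2}$. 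The product of the $r_i^{-1/2}$ factors from each single-diffraction symbol \eqref{sym_MLpgtr1}, collected over all segments, assembles into $l_{\gamma,\aalpha}^{-1/2}$ once one checks that the shared segment lengths $l_i$ appear with the correct total power $-1/2$ (each interior segment of length $l_i$ enters as $(l_i)^{-1/2}\cdot(l_i)^{-1/2}\cdot(l_i)^{1/2}$ after the junction integration contributes back one half-power — this bookkeeping is the one genuinely fiddly part), and the product $\prod_i \tilde d_{\alpha_i}(\beta_i)$ together with the accumulated gauge phases $e^{-i\sum_{j\neq k}\alpha_j(\phi_j(\cdot)-\phi_j(\cdot))}$ telescopes along $\gamma$ into $d_{\gamma,\aalpha}$. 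The conormality with respect to $\{t = r_{z'}+\sum l_i + \bar r_{z''}\}$ and the symbol class $S_c^{-(n-1)/2}$ follow from the standard FIO composition calculus \cite{hormander1971fourier,duistermaat1972fourier} once the canonical relations are checked to compose cleanly and transversally, which holds because at each junction the intermediate covectors are pinned (the $\theta$-component of the momentum vanishes on the flow-out, as recorded in \eqref{canonical}) so the fiber integrations are over genuinely nondegenerate critical points.

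The main obstacle I expect is the careful bookkeeping of the constants and phase factors through the iterated stationary phase — specifically, getting the power of $2\pi$, the number of $e^{-i\pi/4}$ factors, and the power of $l_i$ all simultaneously consistent with \eqref{symb_pgtr2}. It is easy to be off by a factor of $(2\pi)^{1/2}$ or a half-power of some segment length. I would handle this by first doing the $n=2$ case explicitly (two diffractions, one interior junction) to fix all conventions against Lemma \ref{lemma_1_diffraction}, then running the induction: assuming the formula \eqref{symb_pgtr2} holds for a geodesic with $n-1$ diffractions, compose with one more single-diffraction FIO from Lemma \ref{lemma_1_diffraction} and one more interim cutoff $\Pi_{n-1}$, performing stationary phase in the $2+1+1$ intermediate variables $(\tilde z, \zeta, \tau, \lambda')$; the inductive step then only needs to produce the single increment $(2\pi)^{1/2}\cdot e^{-i\pi/4}\cdot\lambda^{-1/2}\cdot(\text{one }d_{\alpha_n}(\beta_n))\cdot(\text{one }l_{n-1}^{-1/2}\text{ net})$, which is a much smaller computation to verify. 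The transversality of the successive canonical relations, needed to legitimize the composition, is automatic here because all segments of $\gamma$ are straight and the generic assumption (no three solenoids collinear) rules out the degenerate geometric-geodesic configurations.
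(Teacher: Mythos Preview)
Your proposal is correct and follows essentially the same route as the paper: induction on the number of diffractions, with the $n=2$ case worked out explicitly by composing two single-diffraction FIOs from Lemma~\ref{lemma_1_diffraction} via stationary phase, and the observation that the interim cutoffs $\Pi_k$ have principal symbol identically $1$ on the relevant flow-out so they drop from the leading term.

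Two small corrections worth flagging. First, your count of stationary-phase variables in the inductive step is off: since Lemma~\ref{lemma_1_diffraction} already absorbs the interim pseudodifferential cutoff into the single-diffraction amplitude, the composition of two such FIOs involves only the intermediate spatial point $z$ (two real variables $r_z,\theta_z$) and the second frequency $\mu$, i.e.\ three variables, not four; there is no separate time variable $\tau$ to integrate (each $t_k$ is fixed) and no extra dual variable $\zeta$. The paper's $3\times 3$ Hessian has signature $-1$ and determinant $\lambda\, l\, r_z/\bar r_z$, and together with the polar Jacobian $r_z$ this produces exactly the increment $(2\pi)^{1/2}\,e^{-i\pi/4}\,\lambda^{-1/2}\,l^{-1/2}$ you anticipate. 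Second, the no-three-collinear hypothesis is not needed for this proposition: the transversal/clean composition holds for any strictly diffractive $\gamma$ because the canonical relation \eqref{canonical} already pins the angular momentum to zero; the geometric assumption only enters later, in the trace analysis, to exclude closed geometric geodesics.
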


\begin{proof}
We compute the microlocalized propagator by applying Lemma \ref{lemma_1_diffraction} inductively. We start with the situation of two diffractions; this involves composing two single diffraction propagators in Lemma \ref{lemma_1_diffraction} once. The microlocalized propagator is 
$$\Pi_e U_{\aalpha}(t-t_1) \Pi_1 U_{\aalpha}(t_1) \Pi_s,$$
where $\Pi_1$ has principal symbol $\sigma_0(\Pi_1)=a_1$. Without loss of generality, we assume that there exists a pseudodifferential operator $\sqrt{\Pi_1}\in\Psi^{0}_c(X)$ with principal symbol $\tilde{a}_1$ such that $\Pi_1-(\sqrt{\Pi_1})^2\in\Psi^{-\infty}(X)$.

Use $(r,\theta)$ (resp. $(\bar{r},\bar{\theta})$) to denote the polar coordinates centered at the first (resp. the second) diffraction. In particular, $\theta=0$ and $\bar{\theta}=0$ correspond to the points on the line segment connecting two diffractions. By Lemma \ref{lemma_1_diffraction}, we obtain

$$\sqrt{\Pi_1} U_{\aalpha}(t_1) \Pi_s= \int e^{i\lambda\big(t_1-r_z-r_{z'}\big)}c_{s,1}(t_1,r_z,r_{z'},\theta_z,\theta_{z'}; \lambda) d\lambda $$
where
\begin{equation*}
    \begin{split}
        c_{s,1}(t_1,r_z,&r_{z'},\theta_z,\theta_{z'}; \lambda)\\
        &\equiv  \frac{(r_zr_{z'})^{-\frac{1}{2}}}{2\pi} \cdot d_{\alpha_1}(\theta_z-\theta_{z'})\cdot a_s(z',\lambda\theta_{z'})\cdot \tilde{a}_1(z,-\lambda\theta_z)\cdot \rho(\lambda) \mod S^{-\frac{1}{2}+0},
    \end{split}
\end{equation*}
and
\begin{equation*}
    \Pi_e U_{\aalpha}(t-t_1) \sqrt{\Pi_1}= \int e^{i\mu\big((t-t_1)-\bar{r}_z-\bar{r}_{z''}\big)}c_{1,e}(t-t_1,\bar{r}_z,\bar{r}_{z''},\bar{\theta}_z,\bar{\theta}_{z''}; \mu) d\mu
\end{equation*}
where
\begin{equation*}
    \begin{split}
        c_{1,e}(t-t_1,\bar{r}_z,&\bar{r}_{z''},\bar{\theta}_z,\bar{\theta}_{z''}; \mu)\\
        &\equiv  \frac{(\bar{r}_z\bar{r}_{z''})^{-\frac{1}{2}}}{2\pi} \cdot d_{\alpha_1}(\bar{\theta}_z-\bar{\theta}_{z''})\cdot \tilde{a}_1(z,\mu\bar{\theta}_{z})\cdot a_e(z'',-\mu\bar{\theta}_{z''})\cdot \rho(\mu) \mod S^{-\frac{1}{2}+0}.
    \end{split}
\end{equation*}
Consider the composition of the above two FIOs. 
\begin{equation*}
\begin{split}
        \Pi_e U_{\aalpha}(t-t_1) \Pi_1 U_{\aalpha}(t_1) \Pi_s
       & \equiv \Pi_e U_{\aalpha}(t-t_1) \sqrt{\Pi_1}\circ \sqrt{\Pi_1} U_{\aalpha}(t_1) \Pi_s \mod \dom_{\aalpha}^{-\infty} \longrightarrow \dom_{\aalpha}^{\infty}\\
       &=\int e^{i\phi}
       c_{1,e}(t-t_1,\bar{r}_z,\bar{r}_{z''},\bar{\theta}_z,\bar{\theta}_{z''}; \mu) c_{s,1}(t_1,r_z,r_{z'},\theta_z,\theta_{z'}; \lambda)dz d\mu d\lambda,
\end{split}
\end{equation*}
with the phase function $\phi=\lambda\big(t_1-r_z-r_{z'}\big)+\mu\big((t-t_1)-\bar{r}_z-\bar{r}_{z''}\big).$ 
Now we apply stationary phase lemma in $(z,\mu)$-variables, or equivalently, in $(r_z,\theta_z, \mu)$-variables. Using the law of cosines, we can write $\bar{z}_r$ in terms of $r_z$ and $\theta_z$. Computing critical points of $\phi$ yields:
\begin{equation}
    \bar{r}_z=(t-t_1)-\bar{r}_{z''},\ \theta_z=0,\ \mu=\lambda.
\end{equation}
This shows there exists a unique critical point lies exactly on the geodesic connecting $p_1$ and $p_2$. Moreover, a simple computation gives
\begin{equation}
\phi''_{\mu,r_z,\theta_z}=
\begin{pmatrix}
0 & 1 & 0\\
1 & 0 & 0\\
0 & 0 & -\frac{\lambda\cdot l\cdot r_z}{\bar{r}_z}
\end{pmatrix}    
\end{equation}
and $\sgn(\phi''_{\mu,r_z,\theta_z})=-1$. The stationary phase lemma thus yields
\begin{equation}
    \Pi_e U_{\aalpha}(t-t_1) \Pi_1 U_{\aalpha}(t_1) \Pi_s= \int e^{i\lambda(t-l-r_{z'}-\bar{r}_{z''})}b_{s,1,e}(t,\bar{r}_{z''},r_{z'},\bar{\theta}_{z''},\theta_{z'};\lambda)d\lambda
\end{equation}
where 
\begin{equation}
\label{principal_b_01e}
\begin{split}
    b_{s,1,e}\equiv \  &(2\pi)^{-\frac{1}{2}} \cdot e^{-i\frac{\pi}{4}}\cdot \frac{d_{\alpha_1}(\beta_1) \cdot d_{\alpha_2}(\beta_2)}{(r_{z'} \cdot l \cdot \bar{r}_{z''} )^{\frac{1}{2}}} \cdot a_s(z';\lambda\theta_{z'}) \\ 
    &\times a_1(z;-\lambda\theta_z) \cdot a_e(z'';-\lambda\bar{\theta}_{z''}) \cdot \rho^2(\lambda) \cdot \lambda^{-\frac{1}{2}}\restrictedto_{\{\bar{r}_z=(t-t_1)-\bar{r}_{z''},\ \theta_z=0\}} \mod S^{0}.
\end{split}
\end{equation}
By the construction of $\Pi_1$, near the singularities at $\{t=r_{z'}+l+\bar{r}_{z''}\}$, the principal symbol $\sigma_0(a_1)\equiv 1$. Therefore, modulo lower order terms, $a_1$ can be omitted in the expression \eqref{principal_b_01e}. This finishes the first step of the induction. 

Assuming we have showed \eqref{int_pgtr2} and \eqref{symb_pgtr2} for the microlocalized propagator with $k$ diffractions 
$$\Pi_k U_{\aalpha}(t-T_{k-1})\Pi_{k-1}\cdots\Pi_1 U_{\aalpha}(t_1) \Pi_s.$$
The microlocalized propagator with $(k+1)$ diffractions
$$\Pi_{k+1} U_{\aalpha}(t-T_{k})\Pi_{k}\cdots\Pi_1 U_{\aalpha}(t_1) \Pi_s$$
is the composition of $\Pi_{k+1} U_{\aalpha}(t-T_{k})$ and $\Pi_k U_{\aalpha}(t_{k})\Pi_{k-1}\cdots\Pi_1 U_{\aalpha}(t_1) \Pi_s$. Using the composition of FIOs and the method of stationary phase in the intermediate $(\mu,r_z,\theta_z)$-variables as before, it is straightforward to show the formula holds for $(k+1)$ diffractions. 
\end{proof}

\begin{remark}
This proposition also shows that each diffraction by a singular vector potential improves the regularity by order $\frac{1}{2}$ as the diffraction on manifolds with conic singularities. See \cite{hillairet2005contribution} and \cite{ford2017diffractive} for detailed discussion in those settings.   
\end{remark}

In the next lemma, we show that by microlocalizing near the starting point and the end point, the wave propagator $U_{\aalpha}(t)$ is equivalent to microlocalized propagators $U_{\aalpha,\gamma}(t)$. 

\begin{lemma}
\label{lemma_pgtr_equiv}
For any $L>0$, choosing any two pseuododifferential operators $\Pi_s,\Pi_e\in \Psi^0_c(X)$ with $\WF'\Pi_s$, $\WF'\Pi_e$ microlocally small enough, near $t=L$, 
\begin{enumerate}
\item $\Pi_e U_{\aalpha}(t) \Pi_{s}: \dom_{\aalpha}^{-\infty} \longrightarrow \dom_{\aalpha}^{\infty}$, i.e., is a smoothing operator, if there is no diffractive geodesic $\gamma$ such that $\gamma(0)\in \WF'\Pi_s$ and $\gamma(t)\in \WF'\Pi_e$. 
\item Otherwise, if there exists $N$ diffractive geodesics $\{\gamma_i\}_{i=1}^N$ such that $\gamma_i(0)\in \WF'\Pi_s$ and $\gamma_i(t)\in \WF'\Pi_e$ for $1\leq i\leq N$, then the propagator can be decomposed to the sum of $N$ microlocalized propagators $\{U_{\aalpha,\gamma_i}(t)\}_{i=1}^N$ with each $U_{\aalpha,\gamma_i}(t)$ associated with $(\gamma_i, \Pi_s,\Pi_e)$ such that
\begin{equation}
    \label{pgtr_equiv}
    \Pi_e U_{\aalpha}(t) \Pi_{s} \equiv  \sum_{i=1}^N\Pi_e U_{\aalpha,\gamma_i}(t) \Pi_{s} \mod \dom_{\aalpha}^{-\infty} \longrightarrow \dom_{\aalpha}^{\infty}.
\end{equation}
\end{enumerate}
\end{lemma}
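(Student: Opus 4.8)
The plan is to reduce everything to standard propagation of singularities plus the single‑diffraction result, Proposition~\ref{prop_pgtr1}, and then package the finitely many contributions using the FIO composition machinery already established in Proposition~\ref{prop_pgtr2}. First I would prove (1). Suppose no diffractive geodesic of length $t$ joins $\WF'\Pi_s$ to $\WF'\Pi_e$; we want $\Pi_e U_{\aalpha}(t)\Pi_s$ smoothing near $t=L$. The key input is that the wavefront set of the Schwartz kernel of $U_{\aalpha}(t)$ is contained in the diffractive relation: away from $S$ this is the Hörmander--Duistermaat propagation theorem \cite{duistermaat1972fourier}, and near a single solenoid it is exactly Proposition~\ref{prop_pgtr1} together with Remark~\ref{rmk-pos}. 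Composing wavefront relations, $\WF'(\Pi_e U_{\aalpha}(t)\Pi_s)$ is contained in the set of pairs in $\WF'\Pi_s\times\WF'\Pi_e$ that are diffractively related by time $t$; under our hypothesis this set is empty for $t$ near $L$ (after shrinking $\WF'\Pi_s,\WF'\Pi_e$ slightly, using that the diffractive relation is closed, a consequence of the finite propagation speed and the fact that solenoids form a finite set so there are only finitely many polygonal trajectories of bounded length). Hence the kernel is smooth, i.e. the operator maps $\dom_{\aalpha}^{-\infty}\to\dom_{\aalpha}^{\infty}$; here one should note that smoothness of the kernel on $X\times X$ plus the elliptic regularity characterization of $\dom_{\aalpha}^{s}$ away from $S$ (stated in Section~2) gives mapping into the power domains, since $\WF'\Pi_e$ avoids $S$.

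For (2), enumerate the diffractive geodesics $\gamma_1,\dots,\gamma_N$ of length near $L$ from $\WF'\Pi_s$ to $\WF'\Pi_e$; since $S$ is finite and each $\gamma_i$ is a polygonal path with vertices in $S$ of total length close to $L$, and since by the no‑three‑collinear assumption there are no geometric segments, $N$ is finite and each $\gamma_i$ has finitely many corners. I would then shrink $\WF'\Pi_s$ and $\WF'\Pi_e$ enough that (i) no point of $\WF'\Pi_s$ is geometrically related to a point of $\WF'\Pi_e$ by time near $L$ (possible since we excluded geometric geodesics), and (ii) the initial covectors determining the different $\gamma_i$ are separated, so that along each $\gamma_i$ the diffraction angles $\beta$ at every vertex stay bounded away from $\pm\pi$. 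For each $\gamma_i$ build the interim microlocalizers $\Pi_1^{(i)},\dots$ as in Section~\ref{sec_mlpg} and set $U_{\aalpha,\gamma_i}(t)=\Pi_e U_{\aalpha}(t-T_{n_i-1})\Pi_{n_i-1}^{(i)}\cdots\Pi_1^{(i)}U_{\aalpha}(t_1)\Pi_s$. The claim \eqref{pgtr_equiv} then follows by writing, modulo smoothing,
\begin{equation*}
\Pi_e U_{\aalpha}(t)\Pi_s \equiv \Pi_e U_{\aalpha}(t-T)\,\Bigl(\sum_j Q_j\Bigr) U_{\aalpha}(T)\Pi_s
\end{equation*}
for a microlocal partition of unity $\sum_j Q_j\equiv \Id$ microlocally near $\pi_L(\WF'(U_{\aalpha}(T)\circ\Pi_s))\cap(\text{flowout})$, repeating this insertion between successive diffraction times, and observing that all cross terms either have disjoint wavefront (hence are smoothing by the same composition‑of‑wavefront‑relations argument as in (1)) or, by standard propagation of singularities in the smooth region between diffractions, cannot carry singularities along any path other than one of the $\gamma_i$. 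Finally, the remark after \eqref{ML-pgtr} — that $U_{\aalpha,\gamma_i}(t)$ is independent of the choice of interim microlocalizers modulo smoothing — shows the right‑hand side is well defined, and in fact this independence follows from the same argument: two choices of interim cutoffs differ by an operator microsupported away from the flowout of $\gamma_i$, hence a smoothing error.

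The step I expect to be the main obstacle is the bookkeeping in (2): showing that the single microlocal partition of unity, inserted once near each of the finitely many diffraction times, genuinely decouples the $N$ geodesics with only smoothing errors. The subtlety is that several $\gamma_i$ may share an initial segment, or even share diffraction points, and only branch later; one must argue that after inserting $Q_j$'s one can track which branch a given piece of singularity follows, i.e. that the "first" diffraction already separates the bundles of covectors $\WF'(U_\aalpha(S_1^{(i)})\Pi_s)$ that belong to distinct $\gamma_i$'s — which again uses finiteness of $S$ and that distinct polygonal trajectories from the (small) $\WF'\Pi_s$ must differ at some vertex. Carefully, one does this by induction on the number of diffraction times $\le L$, at each stage peeling off those geodesics whose next diffraction is the earliest and applying Lemma~\ref{lemma_1_diffraction} together with the smooth propagation result to the remainder; the non‑diffractive part of $U_\aalpha$ near each solenoid (the part microsupported away from $\{t=r_1+r_2\}$, including the geometric wave at $|\theta_1-\theta_2|=\pi$, which we have arranged to avoid) contributes nothing. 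The regularity of the output into $\dom_{\aalpha}^{\infty}$ rather than merely $C^\infty(X\times X)$ is routine given $\WF'\Pi_s,\WF'\Pi_e$ avoid $S$ and the characterization of power domains recalled in Section~2.
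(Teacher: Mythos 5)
Your argument is correct and follows essentially the same strategy as the paper's: part~(1) via composition of wavefront relations (propagation of singularities away from $S$ together with Proposition~\ref{prop_pgtr1} near $S$), and part~(2) by induction on the number of diffractions, inserting microlocal cutoffs between diffraction times and showing that the cross terms such as $\Pi_e U_{\aalpha}(t_2)(\Id-\Pi_1)U_{\aalpha}(t_1)\Pi_s$ are smoothing because their composed canonical relations are empty. The only organizational difference is that you peel off the \emph{earliest} diffraction at each inductive step, while the paper inserts the cutoff at a time $\hat{t}$ between the penultimate and last diffractions and peels off the \emph{last}; these are mirror images and both rely on the same shrinking of $\WF'\Pi_s,\WF'\Pi_e$ to pin down the shared initial/final segments, so the two arguments are interchangeable.
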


\begin{remark}
\label{rmk_solenoid_cutoff}
Part (1) of the lemma also holds if we replace $\Pi_s$ and $\Pi_e$ by smooth cutoff functions $\psi,\varphi \in \CI_c(X)$ respectively such that no diffractive geodesic $\gamma$ with $\gamma(0)\in \supp \psi$ and $\gamma(t)\in \supp \varphi$. The proof again follows from the propagation of singularities as in the case of microlocal cutoffs.
\end{remark}

\begin{proof}[Proof of Lemma \ref{lemma_pgtr_equiv}]
The proof is essentially repetitively using the fact that the composition of the canonical relations is the canonical relations of the composition of two FIOs \cite{hormander1971fourier} \cite{duistermaat1972fourier}.

Part (1) follows directly from Remark \ref{rmk-pos}. For part (2), we prove it by induction on the number of diffractions. We show relation \eqref{pgtr_equiv} for all triples $\{(\gamma_i, \Pi_s,\Pi_e)\}$. Note that if $\{\gamma_i\}=\emptyset$, then (2) implies (1). If $\Pi_e U_{\aalpha}(t) \Pi_{s}$ involves at most one diffraction along all possible diffractive geodesics, the microlocal smallness of $\WF'\Pi_s$ and $\WF'\Pi_e$ can therefore restrict the diffraction to one solenoid. Lemma \ref{lemma_1_diffraction} therefore gives the canonical relations and the local representation of the FIO. 

Assume $\Pi_e U_{\aalpha}(t) \Pi_{s}$ involves at most two diffractions along all the possible diffractive geodesics. As before, there is also at most one continuous family (with starting/ending point varying continuously in $\WF'\Pi_s$/$\WF'\Pi_e$ and the part between the first and last diffraction fixed) of diffractive geodesics by the microlocal smallness of $\WF'\Pi_s$ and $\WF'\Pi_e$. Choosing one diffractive geodesic $\gamma$ among this family, we can construct the microlocalized propagator 
$$\Pi_e U_{\aalpha,\gamma}(t) \Pi_{s} = \Pi_e U_{\aalpha}(t_2) \Pi_1 U_{\aalpha}(t_1) \Pi_{s}$$
with $t=t_1+t_2$ as in the beginning of this section. In order to show equation \eqref{pgtr_equiv}, it suffices to show 
$$\Pi_e U_{\aalpha}(t_2) (\Id-\Pi_1) U_{\aalpha}(t_1) \Pi_{s}$$
is a smoothing operator. Using the same techniques as the construction of the elliptic parametrix (c.f. \cite[Section 5.5]{shubin1987pseudodifferential}), we construct $A\in\Psi^0(X)$ such that $(\Id-\Pi_1)-A^2\in\Psi^{-\infty}(X)$. Therefore, 
\begin{equation}
   \label{comp_canonical}
   \Pi_e U_{\aalpha}(t_2) (\Id-\Pi_1) U_{\aalpha}(t_1) \Pi_{s} \equiv \Pi_e U_{\aalpha}(t_2) A \circ A U_{\aalpha}(t_1) \Pi_{s}
\end{equation}
modulo smoothing operators. By Lemma \ref{lemma_1_diffraction}, the canonical relation of the FIO $\Pi_e U_{\aalpha}(t_2) A$ (resp. $A U_{\aalpha}(t_1) \Pi_s$) is given by the product of diffractively related points of time $t_2$ (resp. $t_1$) in $\WF'\Pi_e\times \WF'A$ (resp. $\WF'A\times \WF'\Pi_s$). By the construction of the microlocalized propagator, the principal symbol $\sigma_0(\Pi_1)\equiv 1$ near the intersection of $t_1$-flow-out of $\WF'\Pi_s$ with $\gamma$. Therefore, there is no point in $\WF'A\cap\gamma$ such that it is diffractive related to $\WF'\Pi_s$ by time $t_1$. Since all points that are both diffractively related to $\WF'\Pi_s$ and $\WF'\Pi_e$ are in $\gamma$, this yields
\begin{equation*}
    \WF' {\Pi_e U_{\aalpha}(t_2) A \circ A U_{\aalpha}(t_1) \Pi_{s}}
    =\WF' {\Pi_e U_{\aalpha}(t_2) A} \circ
    \WF' {A U_{\aalpha}(t_1) \Pi_{s}}
    =\emptyset.
\end{equation*}
Therefore, the wavefront relation of the LHS of \eqref{comp_canonical} is trivial and the corresponding operator is smoothing. 

Assume we have showed \eqref{pgtr_equiv} for (at most) $k$ diffractions. For $(k+1)$ diffractions. Without loss of generality, we assume there are finitely many continuous families of diffractive geodesics; each family corresponds to a diffractive geodesic $\{\gamma_i\}$ with $\gamma_i(0)\in \WF'\Pi_s$ and $\gamma_i(t)\in \WF'\Pi_e$. In particular, we can shrink $\WF'\Pi_s$ and $\WF'\Pi_e$ such that all $\gamma_i$ have the same segments before the first and after the last diffractions. Choose a time $\hat{t}$ before the last diffraction but after all the penultimate diffractions, then construct the microlocalized propagator for each diffractive geodesic in $\{\gamma_i\}$ with $\{\hat{\Pi}_l\}$ being the microlocalizers at the corresponding point $\gamma_i(\hat{t})$. (There might be subsets $\{\gamma_{i_k}\}\subset \{\gamma_{i}\}$ such that $\gamma_{i_k}(\hat{t})$ corresponds to the same point for each subset, so $\hat{\Pi}_l$ is the microlocalizer corresponding to all $\gamma_i$ in such subset.) Then
\begin{equation}
    \begin{split}
        \Pi_e U_{\aalpha}(t) \Pi_{s} 
        &\equiv  \  \Pi_e U_{\aalpha}(t-\hat{t}) \left(\sum_l \hat{\Pi}_l\right) U_{\aalpha}(\hat{t}) \Pi_{s}\\
        &\equiv \  \sum_l \Pi_e U_{\aalpha}(t-\hat{t}) \hat{\Pi}_l \left(\sum_{k_l} U_{\aalpha, \gamma_{l,k_l}}(\hat{t}) \right) \Pi_{s}\\
        &= \  \sum_{i} \Pi_e  U_{\aalpha, \gamma_{i}}(t) \Pi_{s}
    \end{split}
\end{equation}
modulo smoothing terms. The first equality follows from the canonical relations as before, while the second equality uses the induction hypothesis of $k$ diffractions. This finishes the proof of the lemma.
\end{proof}

\section{Regularization and microlocal partition of unity of trace}
\label{reg&part}
This section involves some preparations for computing the leading order singularities of the wave trace $``\Tr U_{\aalpha}(t)"$ near $t=L$ where $L$ is the length of a closed diffractive geodesic. In particular, we shall develop a regularization of the wave trace and a microlocal partition of unity. 

\subsection{Trace regularization}
As mentioned before, although $U_{\aalpha}(t)$ is not even in trace class in the distributional sense, we can introduce a ``free" propagator $U_{0}(t)$ such that $U_{\aalpha}(t)-U_{0}(t)$ is indeed in the trace class in the sense of distributions as in \cite{sjostrand1997trace}. Intuitively, the role of the ``free" propagator is to cancel out both the long-range effect generated by the vector potential and the singularities at the diagonal due to the non-compactness. In order to achieve this goal, we choose a Hamiltonian $P_0$ with the vector potential 
\begin{equation}
    \Vec{A}_{\tilde{\alpha}}=-\tilde{\alpha}\cdot\left(-\frac{y-c_y}{(x-c_x)^2+(y-c_y)^2}, \frac{x-c_x}{(x-c_x)^2+(y-c_y)^2} \right)
\end{equation}
with $\tilde{\alpha}=\sum_{i=1}^n \alpha_i$ being the total flux and $c=(c_x,c_y)=\frac{1}{\tilde{\alpha}} \sum_{i=1}^n \alpha_i\cdot (x_i,y_i)$ being the ``center of mass" of all solenoids if $\tilde{\alpha}\neq0$. In complex coordinates, such a vector potential satisfies 
\begin{equation*}
\begin{split}
    \Vec{A}_{\tilde{\alpha}}-\sum_i \Vec{A}_i
    &=\frac{\tilde{\alpha}}{i}\cdot \frac{z-c}{|z-c|^2}-\sum_{i=1}^n\frac{\alpha_i}{i}\cdot\frac{z-z_i}{|z-z_i|^2}\\
    &= \frac{\tilde{\alpha}}{i}\cdot \frac{z}{|z|^2}\cdot\left(1+\frac{\bar{c}}{\bar{z}}+\cdots\right)-\sum_{i=1}^n \frac{\alpha_i}{i}\cdot \frac{z}{|z|^2}\cdot\left(1+\frac{\bar{z}_i}{\bar{z}}+\cdots\right)\\
    &= \mathcal{O}(r^{-3}).
\end{split}
\end{equation*}
Therefore, by \cite[Proposition 2.1]{sjostrand1997trace} we conclude that $U_{\aalpha}(t)-U_{0}(t)$ is in the trace class in the sense of distributions, where $U_0(t)\df e^{it\sqrt{P_0}}$.

Choose $R>0$ large enough such that all solenoids $s_i$ are enclosed in $B(0,R/3)$. Take $\chi\in\CI_c(\RR^2)$ such that $\chi\restrictedto_{B(0,2R/3)}\equiv 1$ and $\supp \chi \subset B(0,R)$. Note that $U_{\aalpha}(t)-U_{0}(t)$ is not a well-defined operator in general since $U_{\aalpha}(t), U_{0}(t)$ act on different domains. To make sense of this, the correct definition of the renormalized trace is given by
\begin{equation}
\label{dfn_trace}
\begin{split}
     \Tr \big[ U_{\aalpha}(t)-U_{0}(t)\big] \df & \Big[\Tr \big[\chi U_{\bullet}(t)\chi\big]+\Tr \big[(1-\chi) U_{\bullet}(t)\chi\big] + \Tr \big[\chi U_{\bullet}(t)(1-\chi)\big]\Big]\Big\rvert_{0}^{\aalpha} \\
     & + \Tr \big[ (1-\chi) U_{\bullet}(t)(1-\chi)\restrictedto_{0}^{\aalpha}\big],
\end{split}
\end{equation}
since when restricted to $\RR^2\backslash B(2R/3)$ the two domains are the same. Since all closed diffractive geodesics are enclosed in $\supp \chi$, it is natural to consider the trace of $U_{\aalpha}(t)$ restricted to this compact set.  

\begin{lemma}
\label{lemma_trace_identity}
Given the ``free" propagator $U_{0}(t)$ and $\chi\in\CI_c(\RR^2)$ defined above,  
\begin{equation}
    \Tr \big[ U_{\aalpha}(t)-U_{0}(t)\big] - \Tr \big[\chi U_{\aalpha}(t) \chi\big] \in \CI. 
\end{equation}
\end{lemma}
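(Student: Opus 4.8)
The plan is to start from the definition \eqref{dfn_trace}, subtract the term $\Tr[\chi U_\aalpha(t)\chi]$ already appearing in it, and reduce the statement to showing that the six remaining traces, which I group as
$$-\Tr[\chi U_0(t)\chi],\qquad \Tr[(1-\chi)U_\aalpha(t)\chi]-\Tr[(1-\chi)U_0(t)\chi],$$
$$\Tr[\chi U_\aalpha(t)(1-\chi)]-\Tr[\chi U_0(t)(1-\chi)],\qquad \Tr[(1-\chi)(U_\aalpha(t)-U_0(t))(1-\chi)],$$
are each a smooth function of $t$ for $t\neq 0$, which is what the later singularity analysis near $t=L>0$ requires. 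For the first five traces I would use cyclicity of the trace to rewrite each as $\Tr[\psi\,U_\bullet(t)]$ with $\psi\in\CI_c(\RR^2)$: the first becomes $\Tr[\chi^2\,U_0(t)]$, and each of the four mixed traces becomes $\Tr[(\chi-\chi^2)\,U_\bullet(t)]$ with $\chi-\chi^2$ supported in the annulus $\{2R/3\le |z|\le R\}$. Since $\psi$ is compactly supported, the operator families $t\mapsto\psi\,U_\bullet(t)$ are trace class in the distributional sense (because $\psi\,g(\sqrt{P_\bullet})$ is trace class on a surface for every Schwartz $g$), so cyclicity and all ensuing singularity statements can be made rigorous at the level of the smoothed operators obtained by testing against $f\in\CI_c(\RR)$.

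The geometric input is that every periodic orbit of the diffractive geodesic flow of $P_\aalpha$ lies over the convex hull of $S$, hence over $B(0,R/3)$: an orbit with no corner would project to a closed straight line, impossible in $\RR^2$; an orbit with a single corner at $s$ would be an out-and-back segment $z\to s\to z$, whose lift reverses momentum at $z$ and so is not closed in $S^*X$; and an orbit with two or more corners is a closed polygonal trajectory with all vertices at solenoids, hence all segments chords of $B(0,R/3)$. For $P_0$, which has a single solenoid, there are no periodic orbits at all. By the propagation of diffractive singularities (\Cref{rmk-pos}, \Cref{rmk_solenoid_cutoff}, \Cref{lemma_pgtr_equiv}) together with the standard reduction of wave-trace singularities to periodic orbits (as in \cite{duistermaat1975spectrum} and, in the diffractive setting, \cite{ford2017diffractive}), the singular support of $\Tr[\psi\,U_\bullet(t)]$ for $t\neq 0$ is contained in the set of periods of periodic orbits lying over $\supp\psi$. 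Since $\supp(\chi-\chi^2)$ is the annulus $\{2R/3\le|z|\le R\}$, disjoint from the convex hull of $S$, and $P_0$ has no periodic orbits at all, each of the first five traces is smooth for $t\neq 0$. I would flag here the slightly subtle point that the out-and-back geodesics $z\to s\to z$ \emph{do} pass through the annulus and the far region but do \emph{not} contribute a trace singularity there, because the base point $z$ is not a critical point of the length function — this is precisely why the confinement statement has to be phrased for periodic orbits of the flow rather than for closed diffractive geodesics meeting $\supp\psi$.

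For the last trace $\Tr[(1-\chi)(U_\aalpha(t)-U_0(t))(1-\chi)]$ the plan is to first invoke \cite[Proposition 2.1]{sjostrand1997trace}, using the $\mathcal{O}(r^{-3})$ decay of the difference of the two vector potentials established above (which used that $\tilde{\alpha}=\sum_i\alpha_i$ is the total flux and $c$ the center of mass, so that the $r^{-1}$ and $r^{-2}$ terms cancel), to conclude that this operator is trace class in the distributional sense; its trace is $\int(1-\chi)^2(z)\big(U_\aalpha(t,z,z)-U_0(t,z,z)\big)\,dz$. Running the same pushforward argument and the geometric observation shows its singular support for $t\neq 0$ is contained in the periods of periodic orbits of the two flows lying over $\supp(1-\chi)=\{|z|\ge 2R/3\}$, which is empty. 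Summing the six contributions proves the lemma. I expect the main obstacle to be exactly this last trace: unlike the other five it is \emph{not} trace class for any elementary reason, so one is forced to lean on \cite{sjostrand1997trace}, and then to run the trace-singularity reduction in the non-compact exterior region, where the book-keeping — absolute convergence of the diagonal integral, legitimacy of cyclicity, and the stationary-phase reduction — is most delicate.
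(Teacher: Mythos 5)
Your proof is correct and rests on the same geometric input as the paper's: closed diffractive geodesics of $P_\aalpha$ are confined to $B(0,R/3)$ (hence disjoint from $\supp(\chi-\chi^2)$ and from $\supp(1-\chi)$), and $P_0$, having a single solenoid, has none at all. The execution, however, differs in two worthwhile respects. First, the paper proceeds through a microlocal partition of unity $\{\Pi_l\},\psi$ of $\supp\chi$ and invokes Lemma~\ref{lemma_pgtr_equiv}(1) together with Remark~\ref{rmk_solenoid_cutoff} to conclude that each microlocalized piece (e.g.\ $A_l\chi U_0(t)\chi A_l$, $\psi U_0(t)\psi$) is itself a smoothing operator for all $t$; you instead use cyclicity of the trace to collapse everything to $\Tr[\psi\,U_\bullet(t)]$ with $\psi\in\CI_c$ and then argue about the singular support of the pushforward to the diagonal. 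Second --- and this is the substantive improvement --- your insistence on phrasing the criterion in terms of \emph{periodic orbits in $S^*X$} rather than diffractive geodesics with both endpoints in $\supp\psi$ correctly handles the out-and-back trajectory $z\to s\to z$. That trajectory \emph{does} have both endpoints lying over $\supp\psi$, so it would appear to falsify the hypothesis of Lemma~\ref{lemma_pgtr_equiv}(1) as applied in the paper to $\psi U_0(t)\psi$; but since the covector reverses at $z$, it is not closed in $S^*X$, the canonical relation avoids the diagonal, and no trace singularity is produced. Your argument handles this; the paper's statement that ``there are no diffractive geodesics starting and ending in $\supp\psi$'' glosses over it. Finally, you isolate the exterior difference $\Tr[(1-\chi)(U_\aalpha-U_0)(1-\chi)]$ as the one term for which there is no elementary trace-class reduction and which genuinely requires \cite[Prop.~2.1]{sjostrand1997trace}; the paper's ``similar but easier'' elides this distinction. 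Both approaches land in the same place, but yours is the more careful account of why the reduction to $\Tr[\chi U_\aalpha\chi]$ is legitimate away from $t=0$.
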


\begin{proof}
By the definition of the renormalized trace \eqref{dfn_trace}, we need to show all the terms in equation \eqref{dfn_trace} except $\Tr \big[\chi U_{\aalpha}(t) \chi\big]$ are smooth in $t$. This is an application of Lemma \ref{lemma_pgtr_equiv} and cyclicity of the trace. 

We only show that $\Tr \big[\chi U_{0}(t) \chi\big]$ is smooth; proofs for the rest of the terms are similar but easier, since no closed diffractive geodesic intersects $\supp 1-\chi$. We take a finite microlocal partition of unity $\{\Pi_l\}\in\Psi^0_c(X)$, $\psi\in \CI_c(X)$ of $\supp \chi$ such that
\begin{itemize}
    \item $\WF'\Pi_l$ is small enough with another set of pseudodifferential operators $A_l$ such that $\Pi_l-A_l^2\in\Psi^{-\infty}_c(X)$. $\psi$ has smooth square root. 
    \item $\supp\psi\subset B(c,\delta)$ and $\psi\rvert_{B(c,\delta/2)}\equiv1$ for some $\delta$ small enough. 
    \item $\psi$ and $\{\Pi_l\}$ form a complete microlocal partition of unity in the sense that
    $$\WF'\left(\Id-\psi-\sum_{l} \Pi_l\right)\subset T^*(\RR^2\backslash\supp\chi).$$
\end{itemize} 
Therefore, 
\begin{equation}
\begin{split}
     \Tr \big[\chi U_{0}(t) \chi\big] 
     &\equiv \Tr \big[\chi U_{0}(t) \chi (\sum_l \Pi_l)\big]+\Tr \big[\chi U_{0}(t) \chi \psi\big] \\
     &\equiv \sum_l \Tr \big[ A_l \chi U_{0}(t) \chi  A_l \big]+ \Tr \big[\psi U_{0}(t) \psi\big] \mod \CI.
\end{split}
\end{equation}
By part (1) of Lemma \ref{lemma_pgtr_equiv}, since for any $t\in\RR$ there is no closed diffractive geodesic,
\begin{equation}
    A_l \chi U_{0}(t) \chi A_l:  \dom^{-\infty} \longrightarrow \dom^{\infty}
\end{equation}
for every $A_l$, where $\dom$ is the domain of $P_{0}$. Hence, taking trace of $A_l \chi U_{0}(t) \chi A_l$ yields a smooth function on $\RR$. Similarly, by Remark \ref{rmk_solenoid_cutoff} taking trace of $\psi U_{0}(t) \psi$ also yields a smooth function on $\RR$ since there are no diffractive geodesics starting and ending in $\supp \psi$ for any $t$. 
\end{proof}

By Lemma \ref{lemma_trace_identity}, singularities of the regularized trace $\Tr \big[ U_{\aalpha}(t)-U_{0}(t)\big]$ are the same as singularities of the localized trace $\Tr \big[\chi U_{\aalpha}(t) \chi\big]$. For this reason, we shall study the latter in the following sections.

\subsection{Microlocal partition of unity}

In order to reduce the computation of $\Tr \big[\chi U_{\aalpha}(t) \chi\big]$ to microlocalized propagators, we construct a microlocal partition of unity of $\supp \chi$ carefully. We start by choosing $\psi_i\in \CI_c(X), 1\leq i\leq n$ to be smooth cutoff functions at each solenoid such that $\psi_i\equiv 1$ near a $\delta/2$-neighborhood of the solenoid $s_i$ and whose support is contained in a $\delta$-neighborhood of $s_i$. In particular, we also require that each $\psi_i$ has a smooth square root for later use. Multiplying by $\psi_i$ therefore localizes within $\delta$-neighborhood of the solenoid $s_i$. $\{\psi_i\}_{1\leq i\leq n}$ are called \emph{solenoid cutoffs}. Next, let $\{\Pi_k\}_{1\leq k\leq N}\subset\Psi^0_c(X^{\circ})$ be a finite collection of pseudodifferential operators on the interior of $X$ satisfying the following properties: 
\begin{enumerate}
    \item The Schwartz kernel of each $\Pi_k$ has compact support; 
    \item There exists a fixed constant $\delta_{\text{int}}$ such that  $\WF'\Pi_k\subset S^*X$ is contained in a small ball of radius $\delta_{\text{int}}$ with respect to a fixed Finsler metric on $S^*X$;
    \item The $\Pi_k$'s complete the solenoid cutoffs to a microlocal partition of unity on $\supp \chi$ in the sense that 
    \begin{equation*}
        \WF'\left(\Id-\sum_{i=1}^n\psi_i-\sum_{k=1}^N\Pi_k\right)\subset T^*(\RR^2\backslash\supp\chi); 
    \end{equation*}
    \item Each $\Pi_k$ has a square root $\sqrt{\Pi_k}\in\Psi^0_c(X^{\circ})$ modulo smoothing error, i.e., $\Pi_k-(\sqrt{\Pi_k})^2\in \Psi^0_c(X^{\circ})$. The solenoid cutoffs $\{\psi_i\}$ also have smooth square roots.
\end{enumerate}
These pseudodifferential operators $\{\Pi_k\}$ are called \emph{interior microlocalizers}. Note that by adjusting the constant $\delta$ and $\delta_{\text{int}}$, we may choose the solenoid cutoffs/interior microlocalizers as small as we want while still being a finite family. We define $\{B_j\}_{1\leq j\leq n+N}\df \{\Pi_k\}\cup \{\psi_i\}$ to be the complete set of microlocal partition of unity of $\supp \chi$.

For later use, we need to further refine our microlocal partition of unity such that it has the following property: 
\begin{property1}
\item Supposed that $\psi_i$ and $\Pi_k$ are a solenoid cutoff and a interior microlocalizer in the above microlocal partition of unity. For any $q\in \WF'\Pi_k$, consider all points $p\in \WF' \psi_i\subset S^*X$ such that $p$ and $q$ are diffractively related by any fixed time $t_0$, then for each fixed $t_0>0$ either 
    \begin{enumerate}
        \item the projection to the base of $p$: $p^{\mathfrak{b}}\in\{\psi_i\equiv 1\}$ for any $q\in \WF'\Pi_k$, or 
        \item $d(p^{\mathfrak{b}},s_i)>\frac{1}{10}\delta$ for any $q\in \WF'\Pi_k$.
    \end{enumerate}
\end{property1}
This property ensures that the diffractive geodesic flowout of such $\WF'(\Pi_k)$ at any fixed time $t_0$ either stays slightly away from the solenoid, or lies close to the solenoid within the set $\{\psi_i\equiv 1\}$. 

Note that this property may be ensured by leaving $\psi_i$ fixed and shrinking the microsupport of $\Pi_k$ as needed. We take $\delta_{\text{int}}<\frac{1}{100}\delta$ in the definition of the microlocal partition of unity such that all $\WF'\Pi_k$ is contained in a small ball of radius $\delta_{\text{int}}$ with respect to a fixed Finsler metric on $S^*X$. If there exist one point $q\in \WF'\Pi_k$ such that its time $t_0$ flowout enters $B(s_i,\frac{1}{5}\delta)$, then time $t_0$ flowout of all points in $\WF'\Pi_k$ are contained in $\{\psi_i\}\equiv 1$, so partition property (1) is satisfied. Otherwise, the time $t_0$ flowout of all points in $\WF'\Pi_k$ are strictly outside $B(s_i,\frac{1}{5}\delta)$. Therefore it satisfies the partition property (2).

\section{Trace of the Aharonov--Bohm wave propagator}

With all the tools established in previous sections, we now in a position to prove the main theorem on singularities of the wave trace.

\subsection{Trace decomposition}
Using the microlocal partition of unity of $\supp\chi$ constructed in the previous section, for each $t\in\RR$:
\begin{equation*}
    \chi U_{\aalpha}(t) \chi- \chi U_{\aalpha}(t) \chi \left(\sum_{j=1}^{n+N}B_j\right):\ 
    \dom_{\aalpha}^{-\infty} \longrightarrow \dom_{\aalpha}^{+\infty}.
\end{equation*}
Hence taking the trace of this operator yields a smooth function on $\RR$. Thus the singularities of $\Tr \big[\chi U_{\aalpha}(t) \chi\big]$ are the same as the sum over $\{1\leq j\leq n+N\}$ of the singularities of 
\begin{equation}
\label{ML_trace}
 \Tr  \big[\chi U_{\aalpha}(t) \chi B_j\big].   
\end{equation}
Given a fixed $j$, $B_j$ can be either a solenoid cutoff $\psi_i$ or an interior microlocalizer $\Pi_k$. 

In order to see when such a term contributes to singularities of the trace, using cyclicity of the trace, \eqref{ML_trace} is the same as 
$$\Tr \big[\sqrt{B_j} \chi U_{\aalpha}(t) \chi \sqrt{B_j}\big].$$
Consider the propagator $\sqrt{B_j} \chi U_{\aalpha}(t) \chi \sqrt{B_j}$ near $t=L$ for a fixed $L>0$. Then
\begin{equation}
\label{mleq}
\sqrt{B_j} \chi U_{\aalpha}(t) \chi \sqrt{B_j}\equiv \sqrt{B_j} \chi U_{\aalpha,\gamma}(t) \chi \sqrt{B_j} \mod   \dom_{\aalpha}^{-\infty}\longrightarrow\dom_{\aalpha}^{\infty}
\end{equation}
for a diffractive geodesic $\gamma$ with $\gamma(0),\gamma(t)\in \WF'B_j$ by Lemma \ref{lemma_pgtr_equiv}\footnote{We can also generalize Lemma \ref{lemma_pgtr_equiv} to solenoid cutoffs here.}. 
For any $L>0$ \emph{not} being the length of a closed diffractive geodesic, we may refine the microlocal partition by shrinking $\WF'B_j$ such that there is no diffractive geodesic $\gamma$ with $\gamma(0), \gamma(t)\in \WF'B_j$ for $t>0$ near $L$. This can be achieved by taking $\delta_{\text{int}}$ small enough. Therefore, the RHS of the operator equation \eqref{mleq} is $\dom_{\aalpha}^{-\infty}\longrightarrow\dom_{\aalpha}^{\infty}$ in view of part (1) of Lemma \ref{lemma_pgtr_equiv}. Taking trace thus yields a smooth function near $t=L$. This shows the contribution to singularities of the trace near $t=L$ comes from closed diffractive geodesics at length $L$. Moreover, if there are no closed diffractive geodesics of length $L$, all contribution to the total trace from \eqref{ML_trace} is smooth; $\Tr \big[\chi U_{\aalpha}(t) \chi\big] $ is smooth near $t=L$. 

For convenience, we further assume that \emph{$\gamma$ is the only\footnote{For the situation of multiple diffractive geodesics with the same length $L$, we add up the contribution from each one to yield the total singularities at $t=L$.} closed diffractive geodesic with length $L$}. Therefore, in order to calculate the trace near $t=L$, we only need to consider the solenoid cutoffs and interior microlocalizers $B_j$ such that $\gamma\cap \WF'B_j\neq\emptyset$. We use $\{B_{j_l}\}_{1\leq l\leq M}$ to denote the set of all such $B_j$. In particular, $\{B_{j_l}\}_{1\leq l\leq M}$ forms a microlocal partition of unity near $\gamma$ since it is a subset of the microlocal partition of unity $\{B_j\}$ of $\supp\chi$. 

Therefore, near $t=L$, the previous discussion yields the following decomposition:
\begin{equation}
\label{trace_decomp}
    \begin{split}
        \Tr \big[\chi U_{\aalpha}(t) \chi \big]
        & \equiv \Tr \left[\chi U_{\aalpha}(t) \chi \left(\sum_{l=1}^{M}B_{j_l}\right)\right]\mod\CI \\
        & = \sum_{\psi_i\in\{B_{j_l}\}} \Tr \big[\chi U_{\aalpha}(t) \chi \psi_i\big] + \sum_{\Pi_k\in\{B_{j_l}\}} \Tr \big[\chi U_{\aalpha}(t) \chi \Pi_k\big]\\
        & = \sum_{\psi_i\in\{B_{j_l}\}} \Tr \big[U_{\aalpha}(t) \psi_i\big] + \sum_{\Pi_k\in\{B_{j_l}\}} \Tr \big[U_{\aalpha}(t) \Pi_k\big]\\
        & \equiv \sum_{\psi_i\in\{B_{j_l}\}} \Tr \big[U_{\aalpha,\gamma}(t) \psi_i\big] + \sum_{\Pi_k\in\{B_{j_l}\}} \Tr \big[U_{\aalpha,\gamma}(t) \Pi_k\big] \mod \CI.\\
    \end{split}
\end{equation}
The third equality is due to cyclicity of the trace and the fact $\chi\psi_i=\psi_i$, $\chi\Pi_k=\Pi_k$ for $\psi_i,\Pi_k\in \{B_{j_l}\}_{1\leq l\leq M}$; whereas the last equality follows from Lemma \ref{lemma_pgtr_equiv} and the remark after that. More precisely, $\psi_i$ and $\Pi_k$ in the sum form a microlocal partition of unity near the closed diffractive geodesic $\gamma$. It suffices to consider these two types of contributions to the trace. Note that if we can also reduce the first type $\Tr \big[U_{\aalpha,\gamma}(t) \psi_i\big]$ to the representation
$$\Tr \big[\sqrt{\Pi} U_{\aalpha,\gamma}(t) \sqrt{\Pi}\big] \equiv \Tr \big[ U_{\aalpha,\gamma}(t) \Pi\big]\mod \CI $$
where $\Pi$ is an interior microlocalizer, then we may compute the trace of the resulting term using Proposition \ref{prop_pgtr2}.

\subsection{Microlocalized propagators with solenoid cutoffs}
Consider $\Tr \big[U_{\aalpha,\gamma}(t) \psi_i\big]$ for $\psi_i\in \{B_{j_l}\}_{1\leq l\leq M}$ in the equation \eqref{trace_decomp}. In particular, the $\psi_i$'s appearing here are exactly the solenoid cutoffs at each diffraction along the closed diffractive geodesic $\gamma$. Choose a short time $t_1$ such that $t_1<\frac{1}{10}\min_{1\leq j\leq k\leq n}d(s_j,s_k)$; for such $t_1$, the backward propagation of $\psi_i$ from $s_i$, which are all points in $S^*X$ arrive at $\WF' \psi_i$ under time $t_1$ diffractive geodesic flow defined in Section \ref{DG}, stays away from any solenoid. Writing $t_2\df t-t_1$, by the group property of $U_{\aalpha}(t)$,
\begin{equation*}
\begin{split}
    \Tr \big[U_{\aalpha,\gamma}(t) \psi_i\big] \equiv \Tr \big[ U_{\aalpha}(t) \psi_i \big]
    &= \Tr \big[ U_{\aalpha}(t_1) U_{\aalpha}(t_2) \psi_i \big].
\end{split}
\end{equation*}
Since $\{B_{j_l}\}$ is a partition of unity near $\gamma$,
\begin{equation*}
\begin{split}
   U_{\aalpha}(t_1) U_{\aalpha}(t_2) \psi_i 
  \equiv \sum_{\Pi_k\in\{B_{j_l}\}} U_{\aalpha}(t_1) \Pi_k U_{\aalpha}(t_2) \psi_i \mod \dom_{\aalpha}^{-\infty}\longrightarrow\dom_{\aalpha}^{\infty}.
\end{split}
\end{equation*}
Taking trace of the above equation, using cyclicity of the trace we obtain
\begin{equation*}
\begin{split}
    \Tr \big[ U_{\aalpha}(t) \psi_i \big]
    \equiv \sum_{\Pi_k\in\{B_{j_l}\}} \Tr \big[ U_{\aalpha}(t_2) \psi_i U_{\aalpha}(t_1) \Pi_k \big] \mod \CI.
\end{split}
\end{equation*}
By the Partition Property 1, for each $\Pi_k$, the time $t_1$ flowout of $\WF'{\Pi_k}$ is either contained in $\{\psi_i\equiv 1\}$ or stays slightly away from the solenoid. Now we consider these two cases individually.

\emph{Case 1.}
We first assume that $\psi_i, \Pi_k$ satisfy part (1) of the Partition Property 1, i.e., the projection to $X$ of the time $t_1$ flowout of $\WF'{\Pi_k}$ along $\gamma$ is contained in $\{\psi_i\equiv 1\}$, we may replace $\psi_i$ by the identity operator:
\begin{equation*}
    U_{\aalpha}(t_2) \psi_i U_{\aalpha}(t_1) \Pi_k
    \equiv U_{\aalpha}(t_2) \Id U_{\aalpha}(t_1) \Pi_k \mod \dom_{\aalpha}^{-\infty}\longrightarrow\dom_{\aalpha}^{\infty}.
\end{equation*}
Taking the trace yields of the operators yields
\begin{equation*}
\begin{split}
   \Tr \big[U_{\aalpha}(t_2) \psi_i U_{\aalpha}(t_1) \Pi_k\big]
   & \equiv \Tr \big[U_{\aalpha}(t) \Pi_k\big] \equiv \Tr \big[U_{\aalpha,\gamma}(t) \Pi_k\big] \mod\CI
\end{split}
\end{equation*}
where the last equality is due to Lemma \ref{lemma_pgtr_equiv}. We therefore have reduced the first situation to the desired form. 

\emph{Case 2.}
Next, we assume that $\psi_i, \Pi_k$ satisfy part (2) of the Partition Property 1, i.e., the time $t_1$ flowout of $\WF'\Pi_k$ stays away from the solenoid. Note that the diffraction at $s_i$ can happens either within the propagation of $U_{\aalpha}(t_2)$ or $U_{\aalpha}(t_1)$. Here we only discuss the first scenario: free propagation of $U_{\aalpha}(t_1)$, which means the projection of the time $t$ flowout of $\WF'\Pi_k$ to the base does not contain any element of $S$ for $t\in[0,t_1]$; the second scenario can be reduced to the first one using cyclicity of the trace. We apply Egorov's theorem (cf. \cite[Section 11.1]{zworski2012semiclassical}) to pull-back the contribution from $\psi_i$, so that it looks exactly like in the first case. Using the group property of $U_{\aalpha}(t)$,
\begin{equation*}
\begin{split}
   \Tr \big[U_{\aalpha}(t_2) \psi_i U_{\aalpha}(t_1) \Pi_k\big]
   & = \Tr \big[U_{\aalpha}(t_2) \psi_i  [U_{\aalpha}(t_1)\Pi_k U_{\aalpha}(-t_1)] U_{\aalpha}(t_1)\big]\\
   & = \Tr \big[U_{\aalpha}(t_2) \psi_i  \widetilde{\Pi}_k  U_{\aalpha}(t_1)\big]
\end{split}
\end{equation*}
where $\widetilde{\Pi}_k\df U_{\aalpha}(t_1)\Pi_k U_{\aalpha}(-t_1)$ is a pseudodifferential operator in $\Psi^0_c(X^{\circ})$ with principal symbol $(G^{-t_1})^*a_k$, where $a_k$ is the principal symbol of $\Pi_k$. Therefore, $\psi_i \cdot \widetilde{\Pi}_k$ is an pseudodifferential operator in $\Psi^0_c(X^{\circ})$ with the principal symbol $\psi_i\cdot (G^{-t_1})^*a_k$. Applying Egorov's theorem once again yields
\begin{equation*}
\begin{split}
\Tr \big[U_{\aalpha}(t_2) \psi_i  \widetilde{\Pi}_k  U_{\aalpha}(t_1) \big]
& = \Tr \big[U_{\aalpha}(t_2+t_1) \big( U_{\aalpha}(-t_1) \psi_i  \widetilde{\Pi}_k  U_{\aalpha}(t_1) \big)\big]\\
& = \Tr \big[U_{\aalpha}(t)  \Pi_{k,\psi_i}\big]
\end{split}
\end{equation*}
where $\Pi_{k,\psi_i}\df  U_{\aalpha}(-t_1) \psi_i  \widetilde{\Pi}_k  U_{\aalpha}(t_1)$ is a pseudodifferential operator in $\Psi^0_c(X^{\circ})$ with the principal symbol $(G^{t_1})^*\big(\psi_i\cdot (G^{-t_1})^*a_k\big)$. By Lemma \ref{lemma_pgtr_equiv}, 
$$\Tr \big[U_{\aalpha}(t)  \Pi_{k,\psi_i}\big] 
\equiv \Tr \big[U_{\aalpha,\gamma}(t) \Pi_{k,\psi_i}\big] \mod \CI$$  
for a diffractive geodesic $\gamma$ with $\gamma(0),\gamma(t)\in \WF'\Pi_{k,\psi_i}$. Therefore, it suffices to compute the trace of the microlocalized propagation $U_{\aalpha,\gamma}(t) \Pi_{k,\psi_i}$. 

\subsection{Trace of a microlocalized propagator}
In the previous subsection, we have reduced each term in equation \eqref{trace_decomp} to the trace of the corresponding microlocalized propagator along the closed diffractive geodesic $\gamma$. The purpose of this subsection is to compute the microlocalized trace $\Tr [U_{\aalpha,\gamma}(t)\Pi]$ for some interior microlocalizer $\Pi$ near $\gamma$. 

Recall the closed diffractive geodesic $\gamma$ has total length $L$ and $m$ diffractions at points $p_1,\cdots,p_m$. Therefore $\gamma$ can be denoted by $m$ regular geodesics $\{\gamma_i\}_{1\leq i\leq m}$ (straight line segments) concatenated to a closed loop with each segment having length $l_1,\cdots,l_m$ respectively. 

We now perform the method of stationary phase in the base variable $z$ for the microlocalized propagator \eqref{int_pgtr2} in Proposition \ref{prop_pgtr2} restricted to the diagonal and integrated in $X$. In particular, here we take $\Pi_s=\Pi_e=\sqrt{\Pi}$ by cyclicity of the trace. Note that unlike in Proposition \ref{prop_pgtr2}, we do not using stationary phase in the phase variable $\lambda$ here. 
\begin{equation}
 \Tr \big[ U_{\aalpha,\gamma}(t) \Pi\big]=
 \int_X \int_{\RR}e^{i\varphi}b(t,\bar{r}_{z},r_{z},\bar{\theta}_{z},\theta_{z};\lambda)d\lambda dz   
\end{equation}
where the phase function $\varphi=\lambda\left(t-r_{z}-\sum_{i=1}^{m-1}l_i-\bar{r}_{z}\right)$ and the symbol $b\in S_c^{-\frac{m-1}{2}}$ is given by 
\begin{equation}
    (2\pi)^{\frac{m-3}{2}} \cdot e^{-i\frac{(m-1)\pi}{4}} \cdot \frac{d_{\gamma,\alpha}}{l_{\gamma,\alpha}^{\frac{1}{2}}} \cdot b_{\Pi}(z;\lambda\theta_{z}) \cdot \rho(\lambda) \cdot \lambda^{-\frac{m-1}{2}} \mod S_c^{-\frac{m}{2}} 
\end{equation}
where $b_{\Pi}(z;\xi)$ is the amplitude of $\Pi$. Note that the variable $z$ is supported in a compact subset of $X$ owing to the support of the amplitude. Also recall that $(r_z,\theta_z)$ and $(\bar{r}_{z},\bar{\theta}_{z})$ are the coordinates of $z$ under the polar coordinates centered at the first and last diffractions correspondingly.  

The phase function $\varphi$ is critical in $z$-variable precisely when 
\begin{equation*}
    \partial_z(r_z+\bar{r}_{z})=0.
\end{equation*}
Then as before this forces $z$ to lie along the line segment $\gamma_m^{\mathfrak{b}}$ of length $l_m$ connecting the solenoids where the first and the last diffractions happens. Note that there is no integration in the phase variables, unlike the previous calculation in Proposition \ref{prop_pgtr2}; as a result there is no particular point along $\gamma_m$ which is fixed by stationarity. Therefore, under the polar coordinates $(r_z,\theta_z)$, it suffices to consider stationary phase in the $\theta_z$-variable and the integration in $r_z$-variable.   

Compute the Hessian in $\theta_z$: 
\begin{equation*}
    \varphi''(\theta_z)=-\frac{r_z\cdot l_m}{\bar{r}_z}\restrictedto_{r_z+\bar{r}_z=l_n}.
\end{equation*}
The method of stationary phase yields that the microlocalized trace $\Tr \big[ U_{\aalpha,\gamma}(t) \Pi\big]$ has the oscillatory integral expression
\begin{equation}
        \int_{\RR_{\lambda}} e^{i\lambda\left(t-L \right)} a_{\Pi}(t;\lambda)d\lambda 
\end{equation}
where the symbol $a_{\Pi}\in S_c^{-\frac{m}{2}}(\RR; \RR_{\lambda})$ (cf. equation \eqref{symb_pgtr2}) is
\begin{equation}
    \begin{split}
        a_{\Pi} 
        &\equiv \int_{r_z=0}^{l_m} (2\pi)^{\frac{1}{2}}\cdot e^{-\frac{i\pi}{4}}\cdot\left(\frac{\bar{r}_z}{r_z\cdot l_m}\right)^{\frac{1}{2}}\cdot \rho(\lambda) \cdot \lambda^{-\frac{1}{2}}\cdot b(t,\bar{r}_{z},r_{z},\bar{\theta}_{z},\theta_{z};\lambda)\cdot r_z\restrictedto_{z\in\gamma_m} dr_z\\
        &\equiv \left[ (2\pi)^{\frac{m-2}{2}} \cdot e^{-i\frac{m\pi}{4}} \cdot \frac{d_{\gamma,\aalpha}}{{\prod_{i=1}^{m}l_i}^{\frac{1}{2}}} \cdot \rho(\lambda) \cdot \lambda^{-\frac{m}{2}} \right] \cdot \left(\int_{r_z=0}^{l_m} b_{\Pi}(z;\lambda\theta_{z})\restrictedto_{z\in\gamma_m} dr_{z}\right) \mod S_c^{-\frac{m+1}{2}}. 
    \end{split}
\end{equation}
Note that since the product of the smooth cutoff functions in $\lambda$ in the first line satisfies the same condition, we shall use the same notation $\rho(\lambda)$ to denote it for convenience. Here it is important to notice that the last integral of $b_{\Pi}(z;\lambda\theta_{z})\restrictedto_{z\in\gamma_m}$ in $r_z$ only depends on the amplitude of $\Pi$ restricted to $\gamma_m$, which is exactly the location of all stationary points. Also, we can extend the limit of integration to the whole diffractive geodesic $\gamma$ since $b_{\Pi}$ is supported only on $\gamma_m$. We define 
\begin{equation}
\label{PS}
  a_0\df (2\pi)^{\frac{m-2}{2}} \cdot e^{-i\frac{m\pi}{4}} \cdot \frac{d_{\gamma,\aalpha}}{{\prod_{i=1}^{m}l_i}^{\frac{1}{2}}} \cdot \rho(\lambda) \cdot \lambda^{-\frac{m}{2}}. 
\end{equation}
It is also important to notice that $a_0$ is independent of the interior microlocalizer $\Pi$. 

\subsection{Assembling the pieces}
In this subsection, we assemble the contribution to the trace from each piece of microlocalized propagator in equation \eqref{trace_decomp}; this yields the total singularities of $\Tr \chi U_{\aalpha}(t) \chi$ near $t=L$, where $L$ is the length of the closed diffractive geodesic $\gamma$.

Recall that by equation \eqref{trace_decomp}
\begin{equation*}
    \begin{split}
        \Tr \big[\chi U_{\aalpha}(t) \chi \big]
        \equiv \sum_{\psi_i\in\{B_{j_l}\}} \Tr \big[U_{\aalpha,\gamma}(t) \psi_i\big] + \sum_{\Pi_k\in\{B_{j_l}\}} \Tr \big[U_{\aalpha,\gamma}(t) \Pi_k\big] \mod \CI
    \end{split}
\end{equation*}
where $\{B_{j_l}\}_{1\leq l\leq M}\subset\{B_j\}_{1\leq l\leq n+N}$ is a microlocal partition of unity near $\gamma$. The discussion in Section 6.2 further reduce the terms in the first summation into microlocalized propagators
\begin{equation*}
    \begin{split}
        \Tr \big[\chi U_{\aalpha}(t) \chi\big] 
        \equiv \sum_{\psi_i\in\{B_{j_l}\}} \sum_{1\leq k\leq N} \Tr \big[U_{\aalpha,\gamma}(t) \tilde{\Pi}_{k,\psi_i}\big] + \sum_{\Pi_k\in\{B_{j_l}\}} \Tr \big[U_{\aalpha,\gamma}(t) \Pi_k\big] \mod \CI
    \end{split}
\end{equation*}
where $\tilde{\Pi}_{k,\psi_i}=\Pi_k$ if $\Pi_k$ satisfies Partition Property 1(1); otherwise $\Pi_k$ satisfies Partition Property 1(2) and $\tilde{\Pi}_{k,\psi_i}=\Pi_{k,\psi_i}$. We need the following lemma: 

\begin{lemma}
\label{Loc_trace}
The singularities of $\Tr \big[U_{\aalpha,\gamma}(t) \psi_i\big]$ near $t=L$ is given by
\begin{equation}
    \Tr \big[U_{\aalpha,\gamma}(t) \psi_i\big] = \int_{\RR_{\lambda}} e^{i\lambda\left(t-L \right)} a_{\psi_i}(t;\lambda)d\lambda 
\end{equation}
where the amplitude $a_{\psi_i}\in S_c^{-\frac{m}{2}}(\RR; \RR_{\lambda})$ is given by 
\begin{equation}
  a_{\psi_i}\equiv a_0\cdot  \left(\int_{z\in\gamma} \psi_i\restrictedto_{z\in\gamma} dr_{z}\right) \mod S_c^{-\frac{m+1}{2}}.
\end{equation}
\end{lemma}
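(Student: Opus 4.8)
The plan is to reduce the trace $\Tr\big[U_{\aalpha,\gamma}(t)\psi_i\big]$ to the already-analyzed microlocalized trace $\Tr\big[U_{\aalpha,\gamma}(t)\Pi\big]$ of Section 6.3, using the splitting established in Section 6.2. Recall that $\psi_i$ is the solenoid cutoff at the $i$-th diffraction vertex $p_i$ of $\gamma$, that $\{B_{j_l}\}$ is a microlocal partition of unity near $\gamma$, and that for a short time $t_1$ smaller than one tenth the minimal inter-solenoid distance one has, modulo smoothing operators and after using cyclicity, $\Tr[U_{\aalpha,\gamma}(t)\psi_i]\equiv\sum_{\Pi_k\in\{B_{j_l}\}}\Tr\big[U_{\aalpha,\gamma}(t)\tilde\Pi_{k,\psi_i}\big]$, where $\tilde\Pi_{k,\psi_i}=\Pi_k$ if $\Pi_k$ satisfies Partition Property~1(1) and $\tilde\Pi_{k,\psi_i}=\Pi_{k,\psi_i}=U_{\aalpha}(-t_1)\psi_i\widetilde\Pi_k U_{\aalpha}(t_1)$ if it satisfies Partition Property~1(2). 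In either case $\tilde\Pi_{k,\psi_i}$ is an interior microlocalizer near $\gamma$, so Section 6.3 applies termwise.

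First I would apply the formula of Section 6.3 to each summand: $\Tr\big[U_{\aalpha,\gamma}(t)\tilde\Pi_{k,\psi_i}\big]\equiv\int_{\RR_\lambda}e^{i\lambda(t-L)}a_{\tilde\Pi_{k,\psi_i}}(t;\lambda)\,d\lambda$ with $a_{\tilde\Pi_{k,\psi_i}}\equiv a_0\cdot\big(\int_{z\in\gamma}b_{\tilde\Pi_{k,\psi_i}}(z;\lambda\theta_z)\restrictedto_{z\in\gamma}\,dr_z\big)\bmod S_c^{-\frac{m+1}{2}}$, where $a_0$ is the $\Pi$-independent factor \eqref{PS} and $b_{\tilde\Pi_{k,\psi_i}}$ is the amplitude of $\tilde\Pi_{k,\psi_i}$. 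Summing over $k$ and pulling out the common factor $a_0$, it remains to identify $\sum_{\Pi_k\in\{B_{j_l}\}}\int_{z\in\gamma}b_{\tilde\Pi_{k,\psi_i}}(z;\lambda\theta_z)\restrictedto_{z\in\gamma}\,dr_z$ with $\int_{z\in\gamma}\psi_i\restrictedto_{z\in\gamma}\,dr_z$ modulo lower-order terms. Here one uses that only the principal symbol of $\tilde\Pi_{k,\psi_i}$ enters at leading order; on the relevant stationary set (the segment $\gamma_m$, extended to all of $\gamma$ since the amplitudes are supported along $\gamma$) the principal symbol of $\tilde\Pi_{k,\psi_i}$ restricted to $\gamma$ is: $\sigma_0(\Pi_k)\restrictedto_\gamma$ in the Partition Property~1(1) case, and in the 1(2) case $\big(G^{t_1}\big)^*\big(\psi_i\cdot(G^{-t_1})^*\sigma_0(\Pi_k)\big)\restrictedto_\gamma=\psi_i(G^{-t_1}(\cdot))\,\sigma_0(\Pi_k)\restrictedto_\gamma$ by Egorov, which after the change of variables $r_z\mapsto r_z$ along the flow (the geodesic flow acts by translation in arclength along $\gamma$, with unit Jacobian) contributes $\psi_i\restrictedto_\gamma\cdot\sigma_0(\Pi_k)\restrictedto_\gamma$ integrated along $\gamma$.

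The key point, and the main thing to verify carefully, is that on $\gamma$ the cutoff $\psi_i$ combines correctly with the partition of unity: in the 1(1) case the defining property is exactly that the relevant flowout lies in $\{\psi_i\equiv1\}$, so $\psi_i\restrictedto_\gamma\cdot\sigma_0(\Pi_k)\restrictedto_\gamma=\sigma_0(\Pi_k)\restrictedto_\gamma$ there, matching the 1(2) contribution; hence uniformly along $\gamma$ each summand contributes $\psi_i\,\sigma_0(\Pi_k)$ restricted to $\gamma$. Since $\{\Pi_k\}\cup\{\psi_j\}$ is a partition of unity near $\gamma$ and no other $\psi_j$ meets $\gamma$ along the portion where $b$ is supported except possibly at other vertices (which do not affect the leading term because of the support localization of $b_\Pi$ to the single free segment $\gamma_m$ produced by the stationary phase), we get $\sum_{\Pi_k\in\{B_{j_l}\}}\psi_i\,\sigma_0(\Pi_k)\restrictedto_\gamma\equiv\psi_i\restrictedto_\gamma\bmod$ lower order along $\gamma$. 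Therefore $\sum_k a_{\tilde\Pi_{k,\psi_i}}\equiv a_0\cdot\int_{z\in\gamma}\psi_i\restrictedto_{z\in\gamma}\,dr_z\bmod S_c^{-\frac{m+1}{2}}$, which is the claimed formula. I expect the main obstacle to be bookkeeping: making precise that the stationary-phase localization to $\gamma_m$ together with the support of the interior microlocalizers means the other vertices' cutoffs and off-$\gamma$ pieces of the partition do not interfere at leading order, and that the Egorov conjugation in Case 2 genuinely reproduces the $\psi_i\equiv1$ normalization after the arclength change of variables; both are consequences of the constructions in Sections 5 and 6.2 but require a careful statement.
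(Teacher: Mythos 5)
Your overall strategy — reduce to interior microlocalizers via Section 6.2, apply the Section 6.3 formula termwise, then sum using the partition of unity — is the same as the paper's, but there is a genuine error in how the flow pullback is handled, and the error is not cosmetic: followed literally, your intermediate claims give the wrong answer.

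The issue is in the claim that each summand contributes $\psi_i\restrictedto_\gamma\cdot\sigma_0(\Pi_k)\restrictedto_\gamma$ integrated along $\gamma$. In Case 1(1), Partition Property 1(1) says the \emph{time-$t_1$ forward flowout} of $\WF'\Pi_k$ lies in $\{\psi_i\equiv 1\}$; that gives $(G^{t_1})^*\psi_i\equiv 1$ on $\WF'\Pi_k$, \emph{not} $\psi_i\equiv 1$ on $\WF'\Pi_k$. Indeed $\WF'\Pi_k$ sits at distance roughly $t_1$ from $s_i$, generically outside $\supp\psi_i$, so $\psi_i\cdot\sigma_0(\Pi_k)$ may well vanish identically. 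The correct rewriting is $b_{\Pi_k}=(G^{t_1})^*\psi_i\cdot b_{\Pi_k}$ on the relevant set. Similarly in Case 1(2), the Egorov computation gives principal symbol $(G^{t_1})^*\big(\psi_i\cdot(G^{-t_1})^*a_k\big)=\big[(G^{t_1})^*\psi_i\big]\cdot a_k$ (not $\psi_i(G^{-t_1}(\cdot))\,a_k$ — there is a sign slip there), and when you change variables along $\gamma$ to remove the pullback from $\psi_i$, the pullback reappears on $a_k$: you get $\psi_i\cdot(G^{-t_1})^*a_k$, not $\psi_i\cdot a_k$.

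This matters because the summation over $k$ does not rescue the claimed per-$k$ formula. If each summand were $\int\psi_i\cdot\sigma_0(\Pi_k)\,dr_z$, the sum would be $\int\psi_i\cdot\sum_k\sigma_0(\Pi_k)\,dr_z$. But $\sum_k\sigma_0(\Pi_k)\equiv 0$ near $s_i$ (since there $\psi_i\equiv 1$ exhausts the partition of unity), so the sum is concentrated on the transition annulus where both factors are $<1$ and is certainly not $\int\psi_i\,dr_z$. The correct order of operations is the one in the paper: keep every term as $\int(G^{t_1})^*\psi_i\cdot b_{\Pi_k}\,dr_z$, sum to get $\int(G^{t_1})^*\psi_i\cdot\big(\sum_k b_{\Pi_k}\big)\,dr_z=\int(G^{t_1})^*\psi_i\,dr_z$ (because $\sum_k b_{\Pi_k}\equiv 1$ on $\supp (G^{t_1})^*\psi_i\cap\gamma$, which lies away from the solenoids), and only then invoke translation invariance of $dr_z$ along $\gamma$ to conclude $\int\psi_i\,dr_z$. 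You arrive at the right final formula, but your stated intermediate steps would not deliver it.
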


\begin{proof}
The discussion in the Subsection 6.2 yields 
\begin{equation}
    \Tr \big[U_{\aalpha,\gamma}(t) \psi_i\big]\equiv \sum_{1\leq k\leq N} \Tr \big[U_{\aalpha,\gamma}(t) \tilde{\Pi}_{k,\psi_i}\big] \mod \CI
\end{equation}
where $\tilde{\Pi}_{k,\psi_i}=\Pi_k$ if $\Pi_k$ satisfies Partition Property 1(1); otherwise $\Pi_k$ satisfies Partition Property 1(2) and $\tilde{\Pi}_{k,\psi_i}=\Pi_{k,\psi_i}$. For those $\Pi_k$ satisfying Partition Property 1(1), the discussion in Section 6.3 yields
\begin{equation}
    \Tr \big[U_{\aalpha,\gamma}(t) \Pi_k\big]\equiv  \int_{\RR_{\lambda}} e^{i\lambda\left(t-L \right)} a_{\Pi_k}(t;\lambda)d\lambda 
\end{equation}
with
\begin{equation*}
 a_{\Pi_k}(t;\lambda) \equiv a_0\cdot \left(\int_{z\in\gamma} b_{\Pi_k}(z;\lambda\theta_{z})\restrictedto_{z\in\gamma} dr_z \right) \mod S_c^{-\frac{m+1}{2}} 
\end{equation*}
where $b_{\Pi_k}$ is the amplitude of $\Pi_k$. Since $\Pi_k$ satisfies Partition Property 1(1), the integral in the above equation 
\begin{equation}
\label{type1}
 \int_{z\in\gamma} b_{\Pi_k}(z;\lambda\theta_{z})\restrictedto_{z\in\gamma} dr_z 
 = \int_{z\in\gamma} \big(G^{t_1}\big)^*\psi_i\restrictedto_{\gamma} \cdot b_{\Pi_k}(z;\lambda\theta_{z})\restrictedto_{z\in\gamma} dr_z.
\end{equation}
Note that $G^t$ restricted to $\gamma$ is just the translation along $\gamma$ by $t$. Therefore, $\big(G^{t}\big)^*\psi_i\restrictedto_{\gamma}$ is the pull-back of $\psi_i\restrictedto_{\gamma}$ along $\gamma$ by $t$. Moreover, $dr_z$ is translation invariant along $\gamma$. 

On the other hand, if $\Pi_k$ satisfies Partition Property 1(2), the same argument applies although the symbol in this situation is given by Egorov's theorem as in the Subsection 6.2. More precisely, 
\begin{equation}
    \Tr \big[U_{\aalpha,\gamma}(t) \Pi_{k,\psi_i}\big]\equiv  \int_{\RR_{\lambda}} e^{i\lambda\left(t-L \right)} a_{\Pi_{k,\psi_i}}(t;\lambda)d\lambda 
\end{equation}
with
\begin{equation}
\label{type2}
 a_{\Pi_{k,\psi_i}}(t;\lambda) \equiv a_0\cdot \left(\int_{z\in\gamma} \big(G^{t_1}\big)^*\psi_i\restrictedto_{\gamma}\cdot b_{\Pi_k}(z;\lambda\theta_z)\restrictedto_{z\in\gamma} dr_z \right) \mod S_c^{-\frac{m+1}{2}} 
\end{equation}
Combining the equations \eqref{type1} and \eqref{type2} for all interior microlocalizers $\{\Pi_k\}$, it yields that 
\begin{equation*}
    \sum_k \int_{z\in\gamma} \big(G^{t_1}\big)^*\psi_i\restrictedto_{\gamma} \cdot b_{\Pi_k}(z;\lambda\theta_{z})\restrictedto_{z\in\gamma} dr_z 
    = \int_{z\in\gamma} \big(G^{t_1}\big)^*\psi_i\restrictedto_{\gamma} dr_z,
\end{equation*}
since $\{\Pi_k\}$ is a microlocal partition of unity away from the solenoids. The translation invariance of $dr_z$ on $\gamma$ near $s_i$ therefore yields
\begin{equation*}
    \int_{z\in\gamma} \big(G^{t_1}\big)^*\psi_i\restrictedto_{\gamma} dr_z=\int_{z\in\gamma} \psi_i\restrictedto_{\gamma} dr_z.
\end{equation*}
\end{proof}
Therefore, by the equation \eqref{trace_decomp} and the calculations of the (micro)localized trace in Subsection 6.3 and Lemma \ref{Loc_trace}, the amplitude $a(t;\lambda)$ of $\Tr \big[\chi U_{\aalpha}(t)\chi\big]$ near the singularity at $t=L$ is given by
\begin{equation}
    \sum_{\Pi_k\in\{B_{j_l}\}} a_0\cdot \left(\int_{z\in\gamma} b_{\Pi_k}(z;\lambda\theta_{z})\restrictedto_{z\in\gamma} dr_{z}\right) + \sum_{\psi_i\in\{B_{j_l}\}} a_0\cdot \left(\int_{z\in\gamma} \psi_i\restrictedto_{\gamma} dr_z\right)= a_0\cdot L_0 \mod S_c^{-\frac{m+1}{2}}
\end{equation}
where $L_0$ is the primitive length of the closed diffractive geodesic $\gamma$. Combining with the regularization Lemma \ref{lemma_trace_identity}, we therefore have proved the following theorem:  

\begin{theorem}
\label{main_thm}
Consider the wave propagator $U_{\aalpha}(t)=e^{-it\sqrt{P_{\aalpha}}}$ for the Hamiltonian defined in \eqref{TotHam}. The regularized wave propagator $U_{\aalpha}(t)-U_0(t)$ defined in Section \ref{reg&part} is in the trace class in distributional sense. The singularities of the regularized trace $\Tr \big[ U_{\aalpha}(t)-U_{0}(t)\big]$ defined by \eqref{dfn_trace} are given by lengths of all closed diffractive geodesics in $X$. In particular, the contribution to its singularity at $t=L$ that comes from the closed diffractive geodesic $\gamma$ with length $L$ is given by
\begin{equation}
\int_{\RR_{\lambda}} e^{i\lambda\left(t-L \right)} a(t;\lambda)d\lambda     
\end{equation}
where the principal symbol $a\in S^{-\frac{m}{2}}(\RR; \RR_{\lambda})$ is equal to
\begin{equation}
\label{prin_singularity}
    (2\pi)^{\frac{m-2}{2}} \cdot e^{-i\frac{m\pi}{4}} \cdot L_0 \cdot \frac{d_{\gamma,\aalpha}}{{\prod_{i=1}^{m}l_i}^{\frac{1}{2}}} \cdot \rho(\lambda) \cdot \lambda^{-\frac{m}{2}} \mod S^{-\frac{m+1}{2}}
\end{equation}
where $m$ is the number of diffractions; $\rho(\lambda)\in \CI(\RR_{\lambda})$ is a smooth function satisfying $\rho\equiv 0$ for $\lambda<0$ and $\rho\equiv 1$ for $\lambda>1$; $L_0$ is the primitive length of $\gamma$; $l_i$ is the length of the $i$-th piece geodesic of the diffractive geodesic $\gamma$ and 
\begin{equation}
\label{dga}
    d_{\gamma,\aalpha}= \prod_{i=1}^{m}d_{\alpha_i}(\beta_i)
\end{equation}
where $\alpha_i$ is the magnetic flux and $\beta_i$ is the diffraction angle at the $i$-th diffraction. 
\end{theorem}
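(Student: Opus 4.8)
The bulk of the argument is already in place in the sections above, so my plan is to chain the reductions together and track the principal symbol through each of them. The trace-class assertion is immediate: the computation in Section~\ref{reg&part} shows the difference of the two vector potentials is $\mathcal{O}(r^{-3})$, so $U_\aalpha(t)-U_0(t)$ is trace class in the distributional sense by \cite[Proposition~2.1]{sjostrand1997trace}. For the singularities, Lemma~\ref{lemma_trace_identity} gives $\Tr[U_\aalpha(t)-U_0(t)]-\Tr[\chi U_\aalpha(t)\chi]\in\CI$, so it suffices to analyze the compactly localized trace $\Tr[\chi U_\aalpha(t)\chi]$. Inserting the microlocal partition of unity $\{B_j\}=\{\Pi_k\}\cup\{\psi_i\}$ of Section~\ref{reg&part} and using cyclicity of the trace, $\Tr[\chi U_\aalpha(t)\chi]\equiv\sum_j\Tr[\sqrt{B_j}\,\chi U_\aalpha(t)\chi\sqrt{B_j}]$ modulo $\CI$; by Lemma~\ref{lemma_pgtr_equiv} and Remark~\ref{rmk_solenoid_cutoff} each summand is smooth near $t=L$ unless $\WF'B_j$ meets a closed diffractive geodesic of length $L$, and shrinking $\delta_{\text{int}}$ discards every spurious contribution when $L$ is not such a length. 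Since no three solenoids are collinear there are no closed geometric geodesics, so the closed generalized geodesics are exactly the closed polygonal trajectories with vertices in $S$; this yields the description of the singular support.

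Next I would fix $L$ equal to the length of such a closed diffractive geodesic $\gamma$, adding contributions if several share this length. Lemma~\ref{lemma_pgtr_equiv} replaces $U_\aalpha(t)$ by the microlocalized propagator $U_{\aalpha,\gamma}(t)$ and produces the decomposition~\eqref{trace_decomp} into finitely many microlocalized traces over the sub-partition $\{B_{j_l}\}$ near $\gamma$. For a piece $\Tr[U_{\aalpha,\gamma}(t)\psi_i]$ with a solenoid cutoff I would run the Section~6.2 argument: propagate backward a short time $t_1$, reinsert the interior microlocalizers, and apply Partition Property~1 --- either the time-$t_1$ flowout of $\WF'\Pi_k$ lies in $\{\psi_i\equiv1\}$ and $\psi_i$ is replaced by $\Id$, or it stays away from $s_i$ and Egorov's theorem conjugates $\psi_i$ onto the microlocalizer, yielding $\Pi_{k,\psi_i}=U_\aalpha(-t_1)\psi_i\widetilde{\Pi}_k U_\aalpha(t_1)$; either way the $\psi_i$-piece becomes a sum of interior-microlocalizer traces, which is Lemma~\ref{Loc_trace}. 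For an interior-microlocalizer piece $\Tr[U_{\aalpha,\gamma}(t)\Pi_k]$ I would substitute the oscillatory integral form of $U_{\aalpha,\gamma}(t)$ from Proposition~\ref{prop_pgtr2}, restrict to the diagonal, integrate over $X$ in polar coordinates centered at the first and last diffraction points, and apply stationary phase in the angular variable $\theta_z$ \emph{only}, keeping $\lambda$ free. Stationarity pins $z$ to the last segment $\gamma_m$, and the Hessian $\varphi''(\theta_z)=-\frac{r_z l_m}{\bar{r}_z}$ on $\{r_z+\bar{r}_z=l_m\}$ supplies one more Maslov factor and a factor $(2\pi)^{1/2}e^{-i\pi/4}\lambda^{-1/2}$, leaving the amplitude $a_0\cdot\int_\gamma b_{\Pi_k}(z;\lambda\theta_z)\,dr_z$ with $a_0$ as in~\eqref{PS}.

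Finally I would assemble. The prefactor $a_0$ carries the geometry ($m$, $\prod_i l_i$, the Maslov phase) and all the solenoid data through $d_{\gamma,\aalpha}$ --- the diffraction coefficients with their $\sin(\pi\alpha)$ and angle factors together with the holonomy phase --- and is independent of which partition element is used. Summing the residual integrals $\int_\gamma b_{\Pi_k}\,dr_z$ and $\int_\gamma\psi_i\,dr_z$ over the sub-partition $\{B_{j_l}\}$, and using that $\{b_{\Pi_k}\}\cup\{\psi_i\}$ is a partition of unity along $\gamma$ while arclength measure is translation-invariant on each segment (so the Egorov pullbacks $(G^{t_1})^*\psi_i$ integrate to the same value), collapses the sum to $\int_\gamma dr_z=L_0$, the primitive length. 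This gives total amplitude $a_0\cdot L_0$, which is exactly~\eqref{prin_singularity}.

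The step I expect to cost the most care is the solenoid-cutoff reduction: one must verify that the backward time-$t_1$ flowout of each $\WF'\Pi_k$ really obeys the dichotomy of Partition Property~1, and that the two branches --- direct replacement of $\psi_i$ by $\Id$ versus the double Egorov conjugation --- both land on the same normal form $\Tr[U_{\aalpha,\gamma}(t)\Pi]$ with principal symbols that still add up correctly across the partition. A secondary technical point is that $U_{\aalpha,\gamma}(t)$, and hence every microlocalized trace, is independent of the interim microlocalizers modulo smoothing, so that~\eqref{trace_decomp} is well defined; this rests on the elliptic-parametrix argument inside the proof of Lemma~\ref{lemma_pgtr_equiv} and must be reinvoked at each stage.
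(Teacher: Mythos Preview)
Your proposal is correct and follows essentially the same route as the paper: reduce to $\Tr[\chi U_\aalpha(t)\chi]$ via Lemma~\ref{lemma_trace_identity}, decompose over the microlocal partition of unity to obtain~\eqref{trace_decomp}, handle the solenoid cutoffs by the short-time propagation plus Partition Property~1/Egorov reduction (Section~6.2 and Lemma~\ref{Loc_trace}), compute each interior-microlocalizer trace by stationary phase in $\theta_z$ alone against the kernel from Proposition~\ref{prop_pgtr2} (Section~6.3), and then sum the residual arclength integrals over the partition along $\gamma$ to produce the factor $L_0$. The care points you flag --- the Partition Property~1 dichotomy and the independence of interim microlocalizers --- are exactly the ones the paper addresses.
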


\subsection{Fractional holonomy and the proof of the main theorem}

Now we use $l$ to label the order of the diffractions along $\gamma$ with $1\leq l\leq m$ and $k$ to label all the solenoids with $1\leq k\leq n$. Recall that $\{s_k\}_{k=1}^n$ and $\{\alpha_k\}_{k=1}^n$ are the sets of all solenoids and magnetic fluxes correspondingly; $\{s_{k_l}\}_{l=1}^m$, $\{\alpha_{k_l}\}_{l=1}^m$ and $\{\beta_{l}\}_{l=1}^m$ are correspondingly the solenoids, magnetic fluxes and diffraction angles along the diffractive geodesic $\gamma$ with $m$ diffractions. In particular, for each $l$, $s_{k_l}\in \{s_k\}_{k=1}^n$, $\alpha_{k_l}\in \{\alpha_k\}_{k=1}^n$ and there might exist $1\leq l<l'\leq m$ such that $s_{k_l}=s_{k_l'}$ and $\alpha_{k_l}=\alpha_{k_l'}$, i.e., there might be different diffractions happening at the same solenoid at different time along $\gamma$ and the diffraction angles may also be different (For example, at $s_1$ in Figure \ref{cdg}). 

\begin{figure}[H]
  \includegraphics[width=0.70\linewidth]{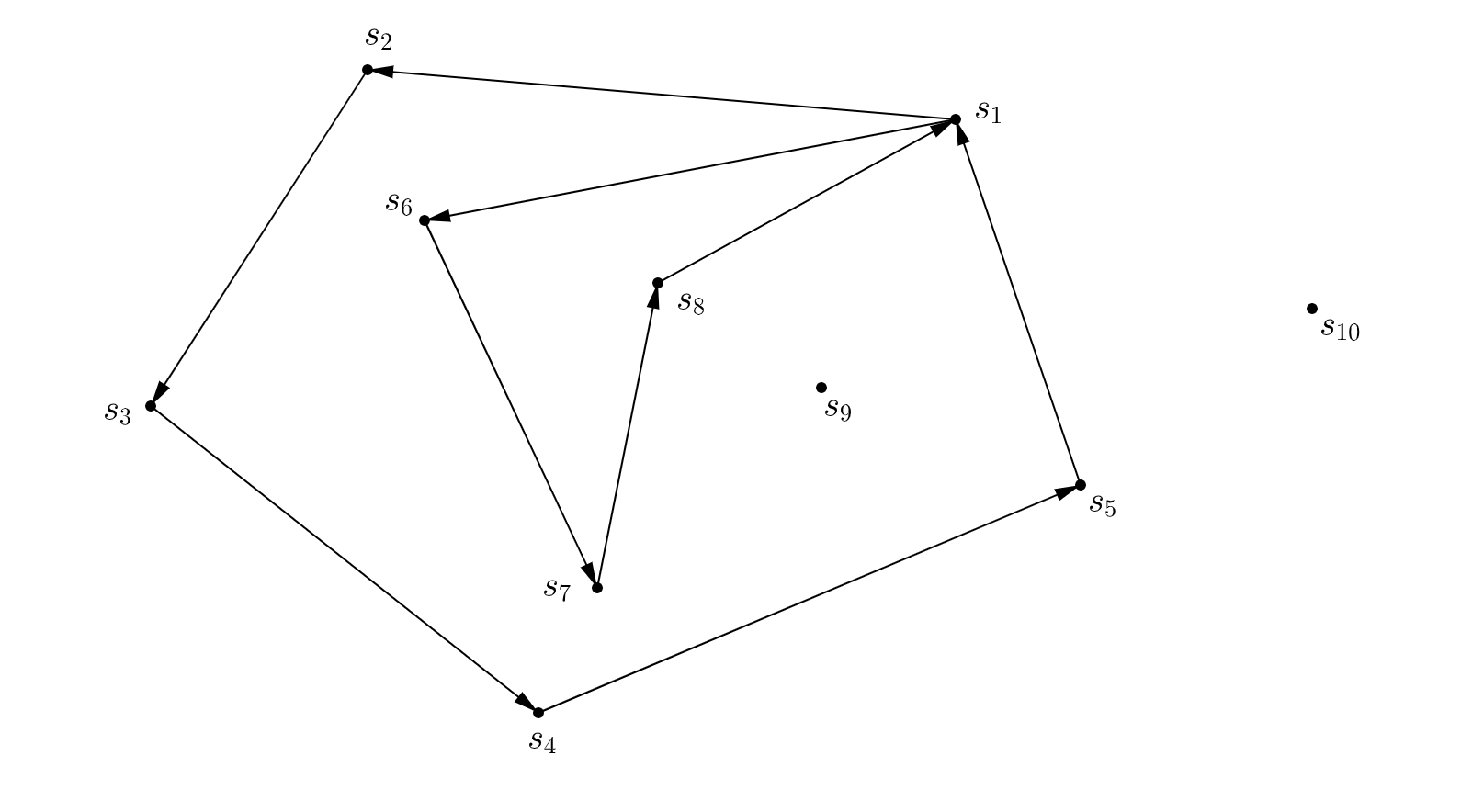}
  \caption{A closed diffractive geodesic $\gamma$}
  \medskip
  \small
  \label{cdg}
   
\end{figure}

We introduce a \emph{fractional winding number} $w_{\gamma,s_k}$ of $\gamma$ with respect to the solenoid $s_k$; the coefficient $d_{\gamma,\aalpha}$ indeed involves the fractional winding numbers of all the solenoids. 
\begin{definition}
\label{frac_winding}
For a closed diffractive geodesic $\gamma$, we define the \emph{fractional winding number} of $\gamma$ with respect to the solenoid $s_k$ to be 
\begin{equation}
    w_{\gamma,s_k}\df \emph{p.v. }\frac{1}{2\pi i}\oint_{\gamma}\frac{1}{z-z_k}dz,
\end{equation}
where \emph{p.v.} denotes the principal value of the integral and $z_k$ is the complex coordinate of $s_k$. 
\end{definition}
Therefore, we shall show later that combining equations \eqref{d-coeff} and \eqref{dga} yields
\begin{equation}
\label{frac_holonomy}
    d_{\gamma,\aalpha} = \left(\prod_{l=1}^{m} \sin(\pi\alpha_{k_l})\cdot \frac{e^{-i\left(\frac{\beta_{l}}{2}\right)}}{\cos(\frac{\beta_{l}}{2})}\right) \cdot \left(\prod_{k=1}^{n} e^{-2\pi i\cdot\alpha_k\cdot w_{\gamma,s_k}} \right)
\end{equation}
where the first term comes from the diffractions and the second term comes from the total holonomy of the closed diffractive geodesic $\gamma$. This suggests that the Aharonov--Bohm effect is actually embodied in principal symbols of singularities (cf. \cite{eskin2014aharonov}).

\begin{remark}
As special cases of fractional winding numbers, there are two basic scenarios (See Figure \ref{cdg2}). In fact, all closed diffractive geodesics can be reduced to a combination of these two scenarios. We assume that $\gamma$ is a simple closed curve.  
\begin{enumerate}
    \item the solenoid $s_k$ is enclosed in $\gamma$ or outside the the region which $\gamma$ enclosed, with no diffraction at it, then the fractional winding number is indeed the winding number of $\gamma$ (e.g. $s_1$ and $s_2$ in Figure \ref{cdg2});
    \item the solenoid $s_k$ is on $\gamma$ such that a diffraction happens at $s_k$ with diffraction angle $\beta$, then the fractional winding number equals to $-\beta/2\pi$ (cf. \cite{hungerbuhler2019non}). The negative sign is due to the fact that the definition of the diffraction angle and the angle in the contour integral differ by a sign (e.g. $s_3$ in Figure \ref{cdg2}).
\end{enumerate}
\end{remark}
\begin{figure}[H]
  \includegraphics[width=0.60\linewidth]{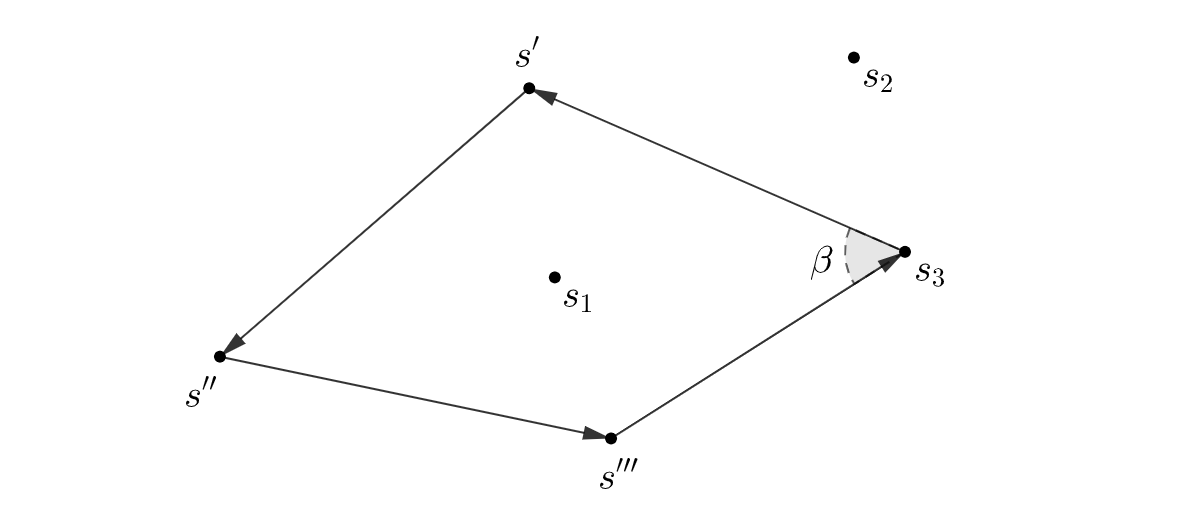}
  \caption{Fractional winding numbers in basic cases}
  \medskip
  \small
  \label{cdg2}
   Note that $s_1,s_2$ correspond to the first scenario mentioned in the above remark, while $s_3$ correspond to the second scenario. 
\end{figure}
To show \eqref{frac_holonomy} from \eqref{dga}, note that for a closed diffractive geodesic $\gamma$ with $m$ diffractions, we can break $\gamma$ into $m$ diffraction-pieces $\{\gamma_l\}_{1\leq l\leq m}$ with $s_{k_l}\in\gamma_l$ by adding $m$ points $\{p_l\}_{1\leq l\leq m}$ in the interior between each two consecutive diffractions (see Figure \ref{breakdown}). 
\begin{figure}[H]
  \includegraphics[width=0.75\linewidth]{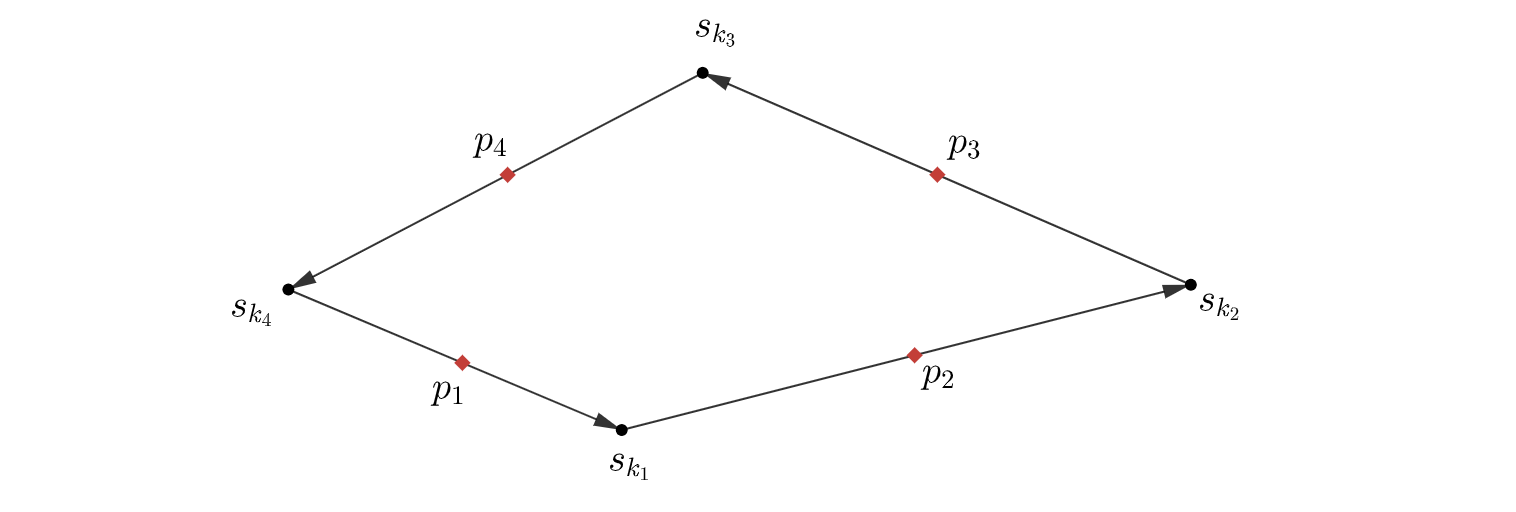}
  \caption{Inserting points between diffractions}
  \medskip
  \small
  \label{breakdown}
 
\end{figure}
In particular, $p_l$ is the point between $s_{k_{l-1}}$ and $s_{k_{l}}$; we define $s_{k_{0}}\df s_{k_m}$ and $p_{m+1}\df p_1$ since the trajectory is closed; $\gamma_l$ is defined to be the segment $p_{l}\rightarrow s_{k_l}\rightarrow p_{l+1}$. This process corresponds to taking microlocal cutoffs between each diffraction in Section \ref{sec_mlpg}. Therefore, we have 
\begin{equation}
    \label{holonomy_symbol}
    \begin{split}
        d_{\gamma,\aalpha} 
        &= \prod_{l=1}^{m}d_{\alpha_{k_l}}(\beta_{l})\\
        &=\left(\prod_{l=1}^{m} \sin(\pi\alpha_{k_l})\cdot \frac{e^{-i\left(\frac{\beta_{l}}{2}\right)}}{\cos(\frac{\beta_{l}}{2})}\right) 
        \cdot \left( \prod_{l=1}^{m} e^{-i\left(\sum_{j=1,j\neq {k_l}}^n\alpha_j\left(\phi_j(p_{l+1})-\phi_j(p_{l})\right)\right)} \right)
    \end{split}
\end{equation}
Consider the following lemma:
\begin{lemma}
\label{lemma_piece_integral}
For any decomposition $\{\gamma_l\}_{1\leq l\leq m}$ and $\{p_l\}_{1\leq l\leq m}$ of $\gamma$, we have 
\begin{equation}
    \label{piece_integral}
    \frac{1}{2\pi}\left(\phi_j(p_{l+1})-\phi_j(p_{l})\right)=\rea \left(\frac{1}{2\pi i}\int_{\gamma_l}\frac{1}{z-z_j}dz\right)
\end{equation}
if $s_{k_l}\neq s_j$, where $z_j$ is the complex coordinate of $s_j$.
\end{lemma}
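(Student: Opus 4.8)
The approach is to interpret $\phi_j=\arg(\,\cdot\,-z_j)$ as the imaginary part of a branch of $\log(z-z_j)$ and to extract \eqref{piece_integral} simply by separating real and imaginary parts of $\int_{\gamma_l}(z-z_j)^{-1}\,dz$. Since $(z-z_j)^{-1}\,dz=d\log(z-z_j)=d\log|z-z_j|+i\,d\arg(z-z_j)$ away from the pole $z_j$, integrating over the arc $\gamma_l$ (oriented from $p_l$ to $p_{l+1}$, consistently with $\gamma$) gives
\begin{equation*}
    \int_{\gamma_l}\frac{dz}{z-z_j}=\big(\log|p_{l+1}-z_j|-\log|p_l-z_j|\big)+i\,\big(\phi_j(p_{l+1})-\phi_j(p_l)\big),
\end{equation*}
where the first bracket is a real number and, by the convention of Remark \ref{angle}, the second bracket is the total change of $\arg(z-z_j)$ along $\gamma_l$, that is, exactly $\phi_j(p_{l+1})-\phi_j(p_l)$. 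Multiplying by $(2\pi i)^{-1}$ turns the first bracket into a purely imaginary quantity and the second into $\frac{1}{2\pi}\big(\phi_j(p_{l+1})-\phi_j(p_l)\big)\in\RR$, so taking real parts yields \eqref{piece_integral} immediately.

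The substantive point, and the step I expect to be the main obstacle, is to verify that $\gamma_l$ genuinely avoids the pole $z_j$, so that the integral above is an ordinary (not principal-value) line integral and a continuous determination of $\arg(z-z_j)$ exists along $\gamma_l$; this is where the standing hypothesis that no three solenoids are collinear enters. The arc $\gamma_l$ is the concatenation of the two segments $p_l\to s_{k_l}$ and $s_{k_l}\to p_{l+1}$; the first lies in the straight $\gamma$-segment from $s_{k_{l-1}}$ to $s_{k_l}$ and, since $p_l$ is an interior point of that segment, it is the proper sub-segment $[p_l,s_{k_l}]$. If $z_j$ were on it, then, using $z_j\neq z_{k_l}$ from the hypothesis of the lemma, $z_j$ would lie strictly between $s_{k_{l-1}}$ and $s_{k_l}$, so $s_{k_{l-1}},s_j,s_{k_l}$ would be three distinct collinear solenoids (they are distinct: $z_j\neq z_{k_{l-1}}$ because it is strictly interior, $z_j\neq z_{k_l}$ by hypothesis, and $s_{k_{l-1}}\neq s_{k_l}$ since consecutive diffraction points of a nondegenerate polygonal trajectory are distinct), contradicting the genericity assumption; the same reasoning rules out $z_j$ lying on $[s_{k_l},p_{l+1}]$. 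Note that $\gamma_l$ does pass through the solenoid $s_{k_l}$, but $(z-z_j)^{-1}$ has no pole there because $s_{k_l}\neq s_j$, which is precisely why no principal value appears here, in contrast with Definition \ref{frac_winding}.

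Once this is secured, the only remaining bookkeeping is that the local primitive $\log(z-z_j)=\log|z-z_j|+i\arg(z-z_j)$, continued analytically along $\gamma_l$, is single-valued on $\gamma_l$ itself ---the sole obstruction to single-valuedness of $\log(z-z_j)$ being a closed loop encircling $z_j$, which an arc cannot perform--- so the displayed evaluation is legitimate, with $\phi_j(p_{l+1})-\phi_j(p_l)$ being the forward variation of the argument along $\gamma_l$, matching the orientation and sign conventions already used in \eqref{symbol_pgtr1} and \eqref{holonomy_symbol}. Beyond the collinearity check, everything in the argument is elementary complex analysis, so I do not anticipate further difficulties.
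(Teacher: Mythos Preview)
Your proof is correct and follows essentially the same approach as the paper: integrate $d\log(z-z_j)$ along $\gamma_l$ and read off the real part of $(2\pi i)^{-1}$ times the result. The paper's own argument is a one-line version of yours; your additional verification that $\gamma_l$ avoids the pole $z_j$ via the non-collinearity hypothesis is a welcome clarification that the paper leaves implicit.
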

Assuming the lemma, we consider the phase of the second term in the last line of the equation \eqref{holonomy_symbol}. Lemma \ref{lemma_piece_integral} yields 
\begin{equation*}
        \frac{1}{2\pi}\sum_{l=1}^{m} \sum_{j=1,j\neq {k_l}}^n\alpha_j\left(\phi_j(p_{l+1})-\phi_j(p_{l})\right)
        =\sum_{l=1}^{m} \sum_{j=1,j\neq {k_l}}^n\alpha_j \cdot \re \left(\frac{1}{2\pi i}\int_{\gamma_l}\frac{1}{z-z_j}dz\right).
\end{equation*}
Rearranging the summation on the RHS yields
\begin{equation}
\label{frac_integral}
    \sum_{l=1}^{m} \sum_{j=1,j\neq {k_l}}^n\alpha_j \cdot \re \left(\frac{1}{2\pi i}\int_{\gamma_l}\frac{1}{z-z_j}dz\right)
    = \sum_{k=1}^n \alpha_k \cdot \re \left(\frac{1}{2\pi i}\int_{\gamma-\sum_{s_k\in\gamma_l}\gamma_l}\frac{1}{z-z_k}dz\right).
\end{equation}
Note that for $\gamma_l$ with $s_k\in\gamma_l$, indeed we have 
\begin{equation*}
    \re \left(\text{p.v. }\frac{1}{2\pi i}\int_{\gamma_l}\frac{1}{z-z_k}dz\right)=0.
\end{equation*}
Therefore, the RHS of the equation \eqref{frac_integral} equals to 
\begin{equation*}
    \sum_{k=1}^n \alpha_k \cdot \re \left(\text{p.v. }\frac{1}{2\pi i}\oint_{\gamma}\frac{1}{z-z_k}dz\right)
    = \sum_{k=1}^n \alpha_k \cdot \left(\text{p.v. }\frac{1}{2\pi i}\oint_{\gamma}\frac{1}{z-z_k}dz\right) 
    = \sum_{k=1}^n \alpha_k \cdot w_{\gamma, s_k}
\end{equation*}
by definition of the fractional winding number. This conclude the proof of Theorem \ref{thm1.1} in the introduction. It remains to show Lemma \ref{lemma_piece_integral}. 
\begin{proof}[Proof of Lemma \ref{lemma_piece_integral}]
It simply follows from the complex integration
\begin{equation*}
    \frac{1}{2\pi i}\int_{\gamma_l}\frac{1}{z-z_j}dz=\frac{1}{2\pi i}\log(z-z_j)\rvert_{p_l}^{p_{l+1}}
\end{equation*}
where the real part of the RHS equals to $\frac{1}{2\pi}\left(\phi_j(p_{l+1})-\phi_j(p_{l})\right)$. 
\end{proof}


\section{An application to lower bounds of resonances near the real axis}
In this section, we apply Theorem \ref{main_thm} to obtain a lower bound of number of scattering resonances in a logarithmic neighborhood of the positive real axis. We refer the reader to the comprehensive book on scattering resonances by Dyatlov and Zworski \cite{dyatlov2019mathematical} for the backgrounds and technical details regarding resonances that are not directly related to our application. The resonances result presented in this paper is closely related to the results of Galkowski \cite{galkowski2017quantitative} and Hillairet--Wunsch \cite{hillairet2017resonances} for manifolds with conic singularities, although our resonance lower bound employs a more general framework of Sj\"ostand \cite{sjostrand1997trace} on black box scattering with long range potentials. We further assume that \emph{there is only one closed diffractive geodesic for each possible length $L$} to prevent the possible cancellations between singularities of the wave trace at the same location $t=L$.

We briefly discuss and define resonances through a method of complex scaling following \cite[Section 5]{sjostrand1997trace}. For a more comprehensive discussion, we refer to \cite{sjostrand1997trace}. We first introduce a smooth submanifold $\Gamma\subset \mathbb{C}^2$ to which we shall deform the operator domain from $\RR^2$. Recall that $R$ is defined in Section \ref{reg&part} such that $\supp \chi \subset B(0,R)$. For given $\epsilon_0>0$ and $R_1>R$, we can construct a smooth function 
\begin{equation*}
    f_{\theta}(t): [0,\pi/2)\times [0,+\infty)\ni (\theta,t)\longmapsto f_{\theta}(t) \in \mathbb{C}
\end{equation*}
injective for every $\theta$, with the following properties,
\begin{enumerate}
  \item $f_{\theta}(t)=t$ for $0\leq t\leq R_1$;
  \item $0\leq \arg f_{\theta}(t) \leq \theta$, $\partial_t f_{\theta}(t)\neq 0$;
  \item $\arg f_{\theta}(t)\leq \arg \partial_t f_{\theta}(t)\leq \arg f_{\theta}(t)+\epsilon_0$;
  \item $f_{\theta}(t)=e^{i\theta}t$ for $t\geq T_0$, where $T_0$ only depends on $\epsilon_0$ and $R_1$. 
\end{enumerate}
We then define $\Gamma$ to be the image of the map $$\kappa_{\theta}: \RR^2 \ni x=tw\longmapsto f_{\theta}(t)w\in \mathbb{C}^2, t=|x|,$$
which coincide with $\RR^2$ in $B(0,R_1)$. 
In order to define the resonances, we use almost analytic extension to extend the operator $P_{\aalpha}$ from $\RR^2$ to an open neighborhood $\Omega$ of $\Gamma$ in $\mathbb{C}^2$. By restricting this extension to $\Gamma$, we can \emph{complex scale} the operator $P_{\aalpha}$ to 
$$P_{\aalpha,\Gamma}u\df (P_{\aalpha}\tilde{u})\rvert_{\Gamma} \text{ with } P_{\aalpha,\Gamma}: \CI(\Gamma)\rightarrow \CI(\Gamma)$$ 
where $\tilde{u}$ is the almost analytic extension of $u$. By \cite[Lemma 5.1, Lemma 5.2]{sjostrand1997trace} and the analytic Fredholm theory, the spectrum of $P_{\aalpha,\Gamma}$ in the set $e^{-2i[0,\theta)}[0,+\infty)$ is discrete and independent of the angle of scaling $\theta$. In other words, if we complex scale the operator $P_{\aalpha}$ by angles $\theta_1,\theta_2$ such that  $0<\theta_1<\theta_2<\frac{\pi}{2}$, then the spectrum of $P_{\aalpha,\Gamma_1}$ agrees with the spectrum of $P_{\aalpha,\Gamma_2}$ with the same multiplicities in the region $e^{-2i[0,\theta_1)}[0,+\infty)$. We therefore define $\mu_j$ to be a \emph{resonance} of $P_{\aalpha}$ (with multiplicity $k$) if $\mu_j^2$ is an eigenvalue of the scaled operator $P_{\aalpha,\Gamma}$ within the set $e^{-2i[0,\theta)}[0,+\infty)$ (with the same multiplicity $k$). We use $\text{Res}(P_{\aalpha})$ to denote the set of resonances of $P_{\aalpha}$.

Consider the regularized trace of the cosine propagator $\cos(t\sqrt{P_{\aalpha}})$. We define the regularized trace 
\begin{equation}
    u(t)\df \Tr\big[2\cos({it\sqrt{P_{\bullet}}})\restrictedto_{0}^{\aalpha}\big]\df \Tr\big[e^{it\sqrt{P_{\bullet}}}\restrictedto_{0}^{\aalpha}\big] + \Tr\big[e^{-it\sqrt{P_{\bullet}}}\restrictedto_{0}^{\aalpha}\big]\in \dom'(\RR).
\end{equation}
Therefore, $u(t)$ is in the trace class in the sense of distribution following the discussion in Section \ref{reg&part}. Moreover, for any $\rho\in\CI_c(\RR)$, we have 
\begin{equation}
    \la u,\rho\ra = \Tr \big[ \rho(t)\big(e^{it\sqrt{P_{\bullet}}}+e^{-it\sqrt{P_{\bullet}}}\big)\restrictedto_{0}^{\aalpha} \big].
\end{equation}
Note that $\rho(t)\big(e^{it\sqrt{P_{\bullet}}}+e^{-it\sqrt{P_{\bullet}}}\big)=\tilde{\rho}(P_{\bullet})$, where $\tilde{\rho}(z)=\hat{\rho}(\sqrt{z})+\hat{\rho}(-\sqrt{z})$ is an entire function in $z$ ($\hat{\rho}$ is the Fourier transform of $\rho$).

In the previous section we have obtained singularities of the trace of the (half-)wave propagator $e^{it\sqrt{P_{\aalpha}}}$; we shall derive a similar result for $u(t)$ now. We observe that the principal amplitude of the regularized trace of $e^{-it\sqrt{P_{\aalpha}}}$ will not cancel off the principal amplitude of $e^{it\sqrt{P_{\aalpha}}}$ near the conormal singularity at $\{t=L\}$ for any $L>0$ being the length of a closed diffractive geodesic. In order to see this, by the unitarity of the half-wave propagator $U_{\aalpha}(-t)=U_{\aalpha}(t)^*$, the Schwartz kernel $K(t,z,z')$ of the propagator $e^{it\sqrt{P_{\aalpha}}}$ satisfies 
$$K(-t,z,z')=\overline{K(t,z',z)}.$$
Let $K_0(t,z,z')$ denote the kernel of the free propagator $e^{it\sqrt{P_{0}}}$ in the regularization.  
Therefore, taking the (regularized) trace yields 
$$\int \big[K-K_0\big](-t,z,z) dz\equiv \int \overline{\big[K-K_0\big](t,z,z)} dz \mod \CI.$$
Written in terms of the oscillatory integral with phase function $\phi=\lambda(t-L)$ near the singularities at $t=L$, this amounts to taking the symbol map:
$$a(z;\lambda)\mapsto \overline{a(z;-\lambda)}.$$
Note that the principal symbol of $e^{it\sqrt{P_{\aalpha}}}$ is supported in $\RR_+$ for the phase variable $\lambda$ (cf. Proposition \ref{prop_pgtr1}). Switching the sign of the phase variable $\lambda$ yields that the principal symbol of $e^{-it\sqrt{P_{\aalpha}}}$ is of the same order but is now supported in $\RR_-$ for the phase variable $\lambda$. Therefore, $u(t)$ must have conormal singularities of the same order at the same locations as $\Tr\big[e^{it\sqrt{P_{\bullet}}}\restrictedto_{0}^{\aalpha}\big]$. Note that this can also be verified directly using \cite[Theorem 6.1]{yang2020ab} and the functional calculus with the microlocality of $\sqrt{P_{\aalpha}}$ proved in the Proposition \ref{ap_mlty} of the Appendix. 

Following Sj\"ostrand \cite[Section 10]{sjostrand1997trace}, let $$\Lambda\df \text{Res}(P_{\aalpha}) \cap \{z\in\mathbb{C}; \ \lvert z\rvert\geq C, 0\leq -\arg z\leq 1/C\}$$ 
for some $C>1$. For $\nu>0$, we define
\begin{equation*}
    \Lambda_{\nu}\df\big\{ \mu_j\in\Lambda\restrictedto -\im\mu_j\leq\nu \log\lvert\mu_j\rvert\big\}
\end{equation*}
to be the set of resonances in a logarithmic neighborhood below the positive real axis, and
\begin{equation*}
    N_{\nu}(r)\df \#\big\{ \mu_j\in\Lambda_{\nu}\restrictedto \re \mu_j\leq r\big\}
\end{equation*}
to denote the counting function of the number of resonances $\mu_j$ with $\re \mu_j \leq r$ in the set $\Lambda_{\nu}$. Note that the ``free" Hamiltonian $P_{0}$ with the vector potential $\Vec{A}_{\tilde{\alpha}}$ used in the regularization has no resonance (at least away from zero resonances) by a similar argument to \cite[Theorem 1.2]{baskin2020scattering} (see also \cite[Section 2]{alexandrova2014resonances}).

Now we state the following theorem of Sj\"ostrand \cite[Theorem 10.1]{sjostrand1997trace} which relates the conormal singularities of the wave trace $u(t)$ to a lower bound of the number of resonances in the region $\Lambda_{\nu}$. 
\begin{theorem}[Sj\"ostrand]
\label{thm_sjo}
Let $k<0,\ L,b>0$ and suppose that for all $\rho\in\CI_c\big((0,+\infty)\big)$ supported in a sufficient small neighborhood of $L$ with $\rho(L)=1$,
\begin{equation*}
    \lvert \widehat{\rho u}(\lambda)\rvert \geq \big(b-o(1)\big)\cdot\lambda^k, \ \lambda\longrightarrow\infty.
\end{equation*}
Then for some $n_1>0$ and every $\epsilon>0$, $\nu>\frac{n_1-k}{L-\epsilon}$ and $\delta>0$, there exists $r(\delta)>0$, such that 
$$N_{\nu}(r)\geq r^{1-\delta}$$
for $r>r(\delta)$.
\end{theorem}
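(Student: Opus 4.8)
The plan is to follow Sj\"ostrand's original scheme \cite[Section 10]{sjostrand1997trace}: one uses a \emph{local trace formula} to rewrite $\widehat{\rho u}$ as a sum over resonances, and then combines the Paley--Wiener bounds with the a priori polynomial upper bound on the resonance counting function to convert the lower bound $|\widehat{\rho u}(\lambda)|\gtrsim\lambda^{k}$ into a lower bound on $N_{\nu}(r)$. Three inputs from \cite{sjostrand1997trace} are needed, all available because $P_{\aalpha}$ fits the long-range black-box framework (see Section \ref{reg&part}): (i) the local trace formula near a fixed length $t=L$; (ii) the analytic Fredholm description of $\text{Res}(P_{\aalpha})$ through the complex-scaled operator $P_{\aalpha,\Gamma}$; and (iii) the a priori bound $\#\{\mu_j\in\text{Res}(P_{\aalpha}):|\mu_j|\le R\}\le CR^{n_1}$, which is the source of the constant $n_1$ appearing in the statement (together with the multiplicity bound $m_j\le C|\mu_j|^{n_1}$).

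First I would record the analytic structure of $F(\lambda):=\widehat{\rho u}(\lambda)$. Since $\rho u$ is a compactly supported distribution of finite order with $\supp(\rho u)\subset(L-\epsilon,L+\epsilon)$, $F$ extends to an entire function satisfying $|F(\lambda)|\le C\la\lambda\ra^{N_0}e^{(L+\epsilon)|\im\lambda|}$, while $\hat\rho$ is entire with $|\hat\rho(\zeta)|\le C_N\la\zeta\ra^{-N}e^{(L+\epsilon)|\im\zeta|}$ for every $N$. Writing $\widehat{\rho u}(\lambda)=\Tr\big[g_\lambda(P_{\aalpha})-g_\lambda(P_0)\big]$ with $g_\lambda(z)=\hat\rho(\lambda-\sqrt{z})+\hat\rho(\lambda+\sqrt{z})$ and deforming the spectral-side contour through the scaled resolvent of $P_{\aalpha,\Gamma}$, the local trace formula yields, modulo a remainder that is $O(\la\lambda\ra^{-\infty})$ as $\lambda\to+\infty$ along $\RR$,
\begin{equation*}
    F(\lambda)=\sum_{j}m_j\big(\hat\rho(\lambda-\mu_j)+\hat\rho(\lambda+\mu_j)\big)+E(\lambda).
\end{equation*}
For real $\lambda\to+\infty$ the terms $\hat\rho(\lambda+\mu_j)$ with $\re\mu_j>0$ are $O(\la\lambda\ra^{-\infty})$, so only the near-diagonal terms $\hat\rho(\lambda-\mu_j)$ survive, each bounded by $C_N\la\lambda-\mu_j\ra^{-N}e^{(L+\epsilon)|\im\mu_j|}$.

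The last step is a covering/counting argument, run by contradiction. Fix $\epsilon'<\delta$ and suppose $N_{\nu}(r)<r^{1-\epsilon'}$ for arbitrarily large $r$. On $I_r:=[r/2,r]$ the hypothesis forces $|F(\lambda)|\ge(b-o(1))\lambda^k$, so, via the representation above, the finitely many bumps $\hat\rho(\cdot-\mu_j)$ must cover $I_r$ at this height. For $\mu_j\in\Lambda_\nu$ with $\re\mu_j\in I_r$ one has $|\im\mu_j|\le\nu\log r+O(1)$ and $m_j\le Cr^{n_1}$, so by Paley--Wiener decay of arbitrarily high order $N$ the set $\{\lambda\in I_r:m_j|\hat\rho(\lambda-\mu_j)|\ge\tfrac12 b\lambda^k\}$ has length at most $C_N\big(m_je^{(L+\epsilon)|\im\mu_j|}\lambda^{-k}\big)^{1/N}\le C_N r^{(n_1+(L+\epsilon)\nu-k)/N}$, which is $\le r^{\epsilon'}$ once $N$ is large. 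Since there are fewer than $r^{1-\epsilon'}$ such resonances, these sets have total length $<r$ and cannot cover $I_r$; at an uncovered point the only remaining contributions to $|F(\lambda)|$ come from resonances below the curve $-\im z=\nu\log|z|$ and from the remainder $E(\lambda)$, and this residual contribution must be shown to be $<\tfrac12 b\lambda^k$, a contradiction. This forces $N_{\nu}(r)\ge r^{1-\epsilon'}\ge r^{1-\delta}$ for $r$ large.

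The main obstacle is precisely the control of that residual contribution, and it is where the threshold $\nu>(n_1-k)/(L-\epsilon)$ is pinned down. One must show, uniformly for $\lambda$ in the relevant intervals, that deep resonances cannot carry enough weight to sustain the bound $\tfrac12 b\lambda^k$: their factor $e^{(L+\epsilon)|\im\mu_j|}$ grows, so this requires combining the a priori polynomial count (iii) with the global exponential-type bound $|F(\lambda)|\le C\la\lambda\ra^{N_0}e^{(L+\epsilon)|\im\lambda|}$ through a Jensen/Phragm\'en--Lindel\"of argument that is insensitive to cancellations in the complex exponential sum $\sum_j m_j\hat\rho(\lambda-\mu_j)$, and it also requires bounding the trace-formula remainder $E$ on disks of radius $\sim r$. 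Balancing the weight $e^{(L\pm\epsilon)|\im\mu_j|}$ against the multiplicity growth $r^{n_1}$ and the decay rate $\lambda^{k}$ is exactly what produces the stated lower threshold on $\nu$; this is carried out in \cite[Section 10]{sjostrand1997trace}, and we invoke it here.
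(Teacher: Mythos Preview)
The paper does not give a proof of this theorem at all: it is stated as a citation of \cite[Theorem~10.1]{sjostrand1997trace} and used as a black box to deduce Corollary~\ref{cor_res}. Your proposal is a reasonable outline of Sj\"ostrand's original argument, so there is nothing to compare against in the present paper; the appropriate move here is simply to cite the result, as the paper does.
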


It therefore remains to combine Theorem \ref{main_thm} and Theorem \ref{thm_sjo} to obtain a lower bound on the number of resonances in a logarithmic neighborhood.

\begin{corollary}
\label{cor_res}
If these exists a closed diffractive geodesic of length $L$ with $m$ diffractions, then for some $n_1$ and every $\epsilon>0$, $\nu>\frac{n_1+m/2}{L-\epsilon}$ and $\delta>0$, there exists $r(\delta)>0$, such that 
\begin{equation}
\label{lb}
    N_{\nu}(r)\geq r^{1-\delta}
\end{equation}
for $r>r(\delta)$. In particular, if $d_{\text{max}}\df\max\big\{d(s_i,s_j)\restrictedto\ s_i,s_j\in S\big\}$, then 
\begin{equation}
\label{lb2}
    \#\left\{\mu_j\bigg\rvert \ 0\leq -\ima \mu_j\leq \left(\frac{1}{2d_{\text{max}}}+\epsilon\right)\cdot\log |\mu_j| ,\ \rea\mu_j\leq r\right\}\geq r^{1-\delta}.
\end{equation}
the smallest logarithmic neighborhood below the positive real axis with this lower bound.
\end{corollary}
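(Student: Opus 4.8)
The plan is to combine the wave-trace singularity of Theorem~\ref{main_thm} with Sj\"ostrand's resonance-counting estimate (Theorem~\ref{thm_sjo}), and then to optimize over the choice of closed diffractive geodesic. First I would verify the hypothesis of Theorem~\ref{thm_sjo} for the regularized cosine trace $u(t)$ at $t=L$ with exponent $k=-m/2$. By the discussion preceding Theorem~\ref{thm_sjo}, near $t=L$ the distribution $u(t)$ has a conormal singularity of the same order and leading amplitude as $\Tr\big[U_{\aalpha}(t)-U_0(t)\big]$, which by Theorem~\ref{main_thm} and Lemma~\ref{lemma_trace_identity} is the oscillatory integral $\int_{\RR_\lambda} e^{i\lambda(t-L)} a(t;\lambda)\,d\lambda$ with $a$ given by~\eqref{prin_singularity}. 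For $\rho\in\CI_c\big((0,\infty)\big)$ supported near $L$ with $\rho(L)=1$, a standard computation with this conormal representation yields $|\widehat{\rho u}(\lambda)| = (b-o(1))\,\lambda^{-m/2}$ as $\lambda\to+\infty$, where $b$ is a positive multiple of $L_0\,|d_{\gamma,\aalpha}|/\big(\prod_{i=1}^m l_i\big)^{1/2}$. Here $d_{\gamma,\aalpha}\neq 0$: each $\sin(\pi\alpha_{k_l})\neq 0$ because $\alpha_{k_l}\in(0,1)$, each factor $e^{-i\beta_l/2}/\cos(\beta_l/2)$ is finite and nonzero because $\gamma$ is strictly diffractive so that $\beta_l\neq\pm\pi$, and the holonomy factor $\prod_{k=1}^n e^{-2\pi i\alpha_k w_{\gamma,s_k}}$ has modulus one. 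Thus the hypothesis of Theorem~\ref{thm_sjo} holds with $k=-m/2<0$, and the theorem delivers exactly the general bound~\eqref{lb}.

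To obtain the sharp statement~\eqref{lb2} I would apply~\eqref{lb} not to a single geodesic but to the iterates of a maximal ``back-and-forth'' trajectory. Fix $s_i,s_j\in S$ with $d(s_i,s_j)=d_{\text{max}}$ and let $\gamma_1$ be the two-segment polygonal path $s_i\to s_j\to s_i$. By the no-three-collinear hypothesis no other solenoid lies on the segment $\overline{s_is_j}$, so $\gamma_1$ is a strictly diffractive geodesic, and its diffraction angle at each of its two vertices is $0\neq\pm\pi$, whence its total coefficient is $\sin(\pi\alpha_i)\sin(\pi\alpha_j)\neq 0$. Its $N$-fold iterate $\gamma_N$ is a closed diffractive geodesic with primitive length $L_0=2d_{\text{max}}$, total length $L_N=2Nd_{\text{max}}$, $m_N=2N$ diffractions, and (a back-and-forth segment encloses no winding and each diffraction angle is $0$) total coefficient $d_{\gamma_N,\aalpha}=(\sin\pi\alpha_i\,\sin\pi\alpha_j)^N\neq 0$. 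Feeding $\gamma_N$ into~\eqref{lb} shows that any $\nu>\frac{n_1+N}{2Nd_{\text{max}}-\epsilon}$ is admissible; as $N\to\infty$ the right-hand side tends to $\frac{1}{2d_{\text{max}}}$, so for any prescribed $\epsilon'>0$ we may choose $N$ large and $\epsilon$ small enough that $\nu=\frac{1}{2d_{\text{max}}}+\epsilon'$ works, giving~\eqref{lb2}. This neighborhood is the smallest attainable by this method, since every closed diffractive geodesic has each segment of length $l_i\leq d_{\text{max}}$, hence $L\leq m\,d_{\text{max}}$ and $\tfrac{m/2}{L}\geq\tfrac{1}{2d_{\text{max}}}$, so $\tfrac{n_1+m/2}{L}$ cannot be pushed below $\tfrac{1}{2d_{\text{max}}}$.

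The main obstacle is not a single estimate but the bookkeeping that rules out cancellation. One must confirm that the passage from the half-wave trace of Theorem~\ref{main_thm} to the cosine trace $u(t)$ preserves the order of the singularity at $t=L$ --- the argument already sketched before Theorem~\ref{thm_sjo}, relying on the fact that the principal symbol of $e^{it\sqrt{P_{\aalpha}}}$ is supported in $\lambda>0$ while that of $e^{-it\sqrt{P_{\aalpha}}}$ is supported in $\lambda<0$, so that the two contributions add rather than cancel --- and, under the standing genericity assumption that each length is realized by a single closed diffractive geodesic, that the geodesic $\gamma_N$ contributing at $t=L_N$ has nonvanishing total coefficient. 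As indicated above, the latter reduces to the elementary facts that $\sin(\pi\alpha)\neq0$ for $\alpha\in(0,1)$ and that $\beta_l\neq\pm\pi$ along a strictly diffractive geodesic.
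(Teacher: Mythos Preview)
Your proposal is correct and follows essentially the same approach as the paper's proof: verify the hypothesis of Theorem~\ref{thm_sjo} with $k=-m/2$ via Theorem~\ref{main_thm}, then take iterates of the back-and-forth geodesic between the two farthest solenoids and let the number of iterates tend to infinity. You are in fact more careful than the paper on two points: you explicitly check that $d_{\gamma,\aalpha}\neq 0$ (so that $b>0$ in Theorem~\ref{thm_sjo}), and you supply the elementary inequality $L\le m\,d_{\text{max}}$ that justifies the optimality claim the paper relegates to a remark.
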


\begin{remark}
For such a lower bound, equation \eqref{lb2} gives the smallest logarithmic neighborhood below the positive real axis obtained using our trace formula. This neighborhood is given by the longest closed diffractive geodesic with the smallest number of diffractions, which corresponds to the closed diffractive geodesics bouncing between the farthest two solenoids. 
\end{remark}

\begin{proof}[Proof of Corollary \ref{cor_res}]
Consider a closed diffractive geodesic of length $L$ with $m$ diffractions. By Theorem \ref{main_thm}, the principal part of the singularity of $\rho u(t)$ at $t=L$ is of order $\mathcal{O}(\lambda^{-\frac{m}{2}})$ as $\lambda\rightarrow\infty$. Applying Theorem \ref{thm_sjo} yields the corresponding lower bound. For the lower bound in equation \eqref{lb2}, take the closed diffractive geodesic with two diffractions at $s_i,s_j$ such that $d(s_j,s_j)=d_{\text{max}}$. Consider this closed diffractive geodesic bounces between the two solenoids for $k$ rounds such that $L=2kd_{\text{max}}$. It is then being diffracted $2k$ times. Letting $k\rightarrow\infty$ yields the desired bound.
\end{proof}

\appendix
\section{Microlocality of the square root operator}
The goal of this appendix is to show the following proposition on microlocality of $\sqrt{P_{\aalpha}}$. The exposition of this appendix is inspired by the discussion of microlocality of $\sqrt{\Lap}$ on manifolds with conic singularities in \cite{hillairet2017resonances}. 

\begin{proposition}
\label{ap_mlty}
On $X=\RRS$, consider the Friedrichs extension operator $P_{\aalpha}$. Then
\begin{enumerate}
    \item For any open sets $U, V\in \RR^2$ such that $U\cap V=\emptyset$, and $V\subset\RRS$, for any $N$, $P_{\aalpha}^N\sqrt{P_{\aalpha}}$ is continuous from $L^2(V)$ into $L^2(U)$. 
    \item For any simply connected open set $U$ such that $\Bar{U}\subset \RRS$, $\sqrt{P_{\aalpha}}$ is a pseudodifferential operator from $H^1_c(U)$ to $L^2_{loc}(U)$. 
\end{enumerate}
\end{proposition}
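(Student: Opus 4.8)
The plan is to establish both parts as instances of pseudolocality of the functional calculus, combining the Helffer--Sj\"ostrand formula with iterated resolvent commutator estimates, in the spirit of the treatment of $\sqrt{\Lap}$ on conic manifolds in \cite{hillairet2017resonances}.

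\textbf{Part (1).} It suffices to fix $\chi_U\in\CI_c(U)$, $\chi_V\in\CI_c(V)$ and show that $\chi_U P_{\aalpha}^{N}\sqrt{P_{\aalpha}}\,\chi_V$ is bounded on $L^2$ for every $N$; this is the asserted continuity, and letting $N\to\infty$ (together with the symmetric statement with the two cutoffs interchanged) also gives smoothness of the Schwartz kernel of $P_{\aalpha}^N\sqrt{P_{\aalpha}}$ on $U\times V$ away from $S$. First I would write $g(\lambda)=\lambda^{N}\sqrt{\lambda}$, smoothed near $\lambda=0$ to a function of polynomial growth of order $N+\tfrac12$, and pick an almost analytic extension $\widetilde{g}$ with $|\bar\partial\widetilde{g}(z)|\le C_{M}\,|\im z|^{M}\la z\ra^{N+\frac12-M}$ for all $M$. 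Composing the Helffer--Sj\"ostrand formula with the cutoffs gives
\[
\chi_U\,g(P_{\aalpha})\,\chi_V=\frac{1}{\pi}\int_{\mathbb{C}}\bar\partial\widetilde{g}(z)\;\chi_U\,(P_{\aalpha}-z)^{-1}\,\chi_V\,dm(z),
\]
so the problem reduces to bounding $\chi_U(P_{\aalpha}-z)^{-1}\chi_V$ with extra decay in $\im z$. Since $\supp\chi_U\cap\supp\chi_V=\emptyset$, I would expand it by applying $[\phi,(P_{\aalpha}-z)^{-1}]=-(P_{\aalpha}-z)^{-1}[P_{\aalpha},\phi](P_{\aalpha}-z)^{-1}$ $k$ times along a finite staircase of cutoffs $\chi_U=\phi_0,\phi_1,\dots,\phi_{k-1}$ with $\phi_{j}\equiv1$ on $\supp\nabla\phi_{j-1}$ and $\supp\phi_j$ disjoint from $\supp\chi_V$ and from $S$. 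This writes $\chi_U(P_{\aalpha}-z)^{-1}\chi_V$ as a product of $k+1$ resolvents and $k$ first-order operators $[P_{\aalpha},\phi_j]$ with smooth bounded coefficients; since $\|[P_{\aalpha},\phi_j](P_{\aalpha}-z)^{-1}\|_{L^2\to L^2}\lesssim\la z\ra^{1/2}/|\im z|$ by an interior elliptic estimate away from $S$, one obtains $\|\chi_U(P_{\aalpha}-z)^{-1}\chi_V\|_{L^2\to L^2}\lesssim\la z\ra^{k/2}/|\im z|^{k+1}$. For $M$ large enough the $z$-integral then converges in operator norm, giving the claimed $L^2$-boundedness for each $N$.

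\textbf{Part (2).} On $U$ the magnetic potential equals $-\sum_i\alpha_i\nabla\phi_i$, which is smooth because $\overline U\cap S=\emptyset$; as $U$ is simply connected it is a gradient there, $-\sum_i\alpha_i\nabla\phi_i=\nabla\varphi$ with $\varphi\in\CI(\overline U)$. Extending $\varphi$ to a function in $\CI_c(\RR^2)$ and setting $Q\df\bigl(\tfrac{1}{i}\nabla-\nabla\varphi\bigr)^{2}=e^{i\varphi}(-\Lap)e^{-i\varphi}$, the operators $Q$ and $P_{\aalpha}$ are second-order differential operators agreeing on $U$, and $\sqrt{Q}=e^{i\varphi}\sqrt{-\Lap}\,e^{-i\varphi}$ is a classical first-order $\Psi$DO on $\RR^2$. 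It then remains to show $\chi\sqrt{P_{\aalpha}}\chi'-\chi\sqrt{Q}\chi'$ is smoothing for all $\chi,\chi'\in\CI_c(U)$; granting this, $\sqrt{P_{\aalpha}}$ microlocalized to $U$ is a first-order $\Psi$DO, hence continuous $H^1_c(U)\to L^2_{\loc}(U)$. For the smoothing statement I would again use Helffer--Sj\"ostrand together with the identity, valid for $\psi\in\CI_c(U)$ with $\psi\equiv1$ near $\supp\chi\cup\supp\chi'$ (using $Q=P_{\aalpha}$ on $U$ and interior elliptic regularity on $U\setminus S$),
\[
\chi(P_{\aalpha}-z)^{-1}\chi'=\chi(Q-z)^{-1}\chi'+\chi(Q-z)^{-1}[Q,\psi]\,(P_{\aalpha}-z)^{-1}\chi',
\]
whose error term is smoothing with polynomial growth in $z$ since $\supp\nabla\psi$ is disjoint from $\supp\chi$ and from $\supp\chi'$, so both outer factors are of the pseudolocal type handled in Part (1); integrating the difference against $\bar\partial\widetilde{g}$ then concludes.

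\textbf{Main obstacle.} The genuinely delicate point is the interior elliptic estimate and commutator bounds near $S$ in Part (1): $[P_{\aalpha},\phi]$ involves the magnetic potential, which is singular at $S$. I would resolve this by arranging the staircase so that every $\supp\nabla\phi_j$ lies in the compact region separating $\supp\chi_U$ from $\supp\chi_V$ and stays away from $S$ --- possible because $\supp\chi_V\subset\RRS$ and $\chi_U$ may be chosen identically $1$ near $S\cap\supp\chi_U$ --- after which $[P_{\aalpha},\phi_j]$ has smooth, bounded coefficients and every estimate reduces to the standard one for a nonnegative self-adjoint Schr\"odinger operator on $\RR^2$.
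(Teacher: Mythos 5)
Your proof is correct in substance but takes a genuinely different technical route from the paper. The paper works through the heat kernel: it writes $\sqrt{P_{\aalpha}}=\Gamma(\tfrac12)^{-1}P_{\aalpha}\int_0^\infty e^{-tP_{\aalpha}}\,t^{-1/2}\,dt$, observes that the large-$t$ tail is smoothing, and then for the small-$t$ part argues via \emph{hypoellipticity} of $\partial_t+P_{\aalpha}$: for Part (1) it shows the distribution $t\mapsto\langle e^{-tP_{\aalpha}}a,b\rangle$ is smooth on $\RR\times U\times V$ and vanishes to infinite order at $t=0$, and closes with the uniform boundedness principle; for Part (2) it compares the heat kernel of $P_{\aalpha}$ with the gauge-transformed flat heat kernel $e^{-i\phi}\Lap e^{i\phi}$ on $U$ and again shows the difference is smooth by hypoellipticity. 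Your approach replaces the heat kernel by the resolvent: Helffer--Sj\"ostrand representation for $g(P_{\aalpha})$, a staircase commutator expansion to get decay of $\chi_U(P_{\aalpha}-z)^{-1}\chi_V$ in $|\im z|$ for Part (1), and for Part (2) the resolvent identity that peels off the flat $(Q-z)^{-1}$ modulo a term handled by Part (1). The two proofs share the decisive structural point of Part (2) --- the local gauge equivalence of $P_{\aalpha}$ with $-\Lap$ on a simply connected $U\ssubset\RRS$ --- but the machinery is different. Your route is more ``quantitative'' and exposes the resolvent decay explicitly, at the cost of handling the Helffer--Sj\"ostrand formula for the unbounded symbol $g(\lambda)=\lambda^{N+1/2}$: as written the formula for unbounded $g$ needs a standard fix (e.g.\ apply it to $h=g\cdot(1+\lambda)^{-K}$ for $K>N+\tfrac12$ and recover $g(P_{\aalpha})=(1+P_{\aalpha})^K h(P_{\aalpha})$, noting that $(1+P_{\aalpha})^K$ is a local differential operator away from $S$ so pseudolocality is preserved). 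Also, the commutator bound you invoke follows cleanly from the sum-of-squares form $P_{\aalpha}=D^*D$ with $D=\tfrac1i\nabla-\vec A$, giving $\|Du\|\le\|P_{\aalpha}^{1/2}u\|$ globally, rather than from an interior elliptic estimate per se; the estimate is nonetheless correct. The paper's heat-kernel/hypoellipticity argument is softer and avoids these resolvent estimates entirely, which is why it is somewhat shorter.
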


\begin{proof}
Using functional calculus, we write $\sqrt{P_{\aalpha}}$ in terms of heat kernel: 
\begin{equation}
    \sqrt{P_{\aalpha}}=\frac{P_{\aalpha}}{\Gamma(\frac{1}{2})}\int_0^{\infty}e^{-tP_{\aalpha}}t^{-\frac{1}{2}}dt.
\end{equation}
Take a smooth cutoff function $\rho\in\CI_c([0,+\infty))$ such that $\rho\equiv 1$ on $[0,2t_0]$ for some $t_0>0$, then take $\psi=1-\rho$. Since $e^{-tP_{\aalpha}}$:  $\dom_{\aalpha}^{-\infty}\mapsto \bigcap_{n} \dom_{\aalpha}^{n}$ and 
\begin{equation}
    \int_0^{\infty}e^{-tP_{\aalpha}}\psi(t)t^{-\frac{1}{2}}dt=e^{-t_0P_{\aalpha}}\int_0^{\infty}e^{-(t-t_0)P_{\aalpha}}\psi(t)t^{-\frac{1}{2}}dt,
\end{equation}
the operator defined in the left-hand side of the equation is smoothing. Therefore, we only need to consider the operator
\begin{equation}
    P_{\aalpha} \int_0^{\infty}e^{-tP_{\aalpha}}\rho(t)t^{-\frac{1}{2}}dt.
\end{equation}
Since $P_{\aalpha}$ is a differential operator, it suffices to consider the microlocality of the operator-valued integral: 
\begin{equation}
\label{red}
    \int_0^{\infty}e^{-tP_{\aalpha}}\rho(t)t^{-\frac{1}{2}}dt.
\end{equation}
We first prove (1). Take two open sets $U, V\in \RR^2$ such that $U\cap V=\emptyset$, and $V\subset\RRS$. By the reduction above, we need to show 
\begin{equation}
\label{red_1}
    P_{\aalpha}^N\int_0^{\infty}e^{-tP_{\aalpha}}\rho(t)t^{-\frac{1}{2}}dt:\ L^2(V)\longrightarrow L^2(U)
\end{equation}
is continuous for any $N$. Therefore we consider the distribution $T_a$ on $\RR\times V$ defined by
\begin{equation}
    \la T_a, \varphi(t)b(y)\ra_{\dom'\times\dom}\df\int_0^{\infty}\la a,e^{-tP_{\aalpha}}b\ra_{L^2}\varphi(t)dt.
\end{equation}
In the sense of distributions in $\dom'(\RR\times V)$, 
\begin{equation}
    (\partial_t+P_{\aalpha})T_a=0.
\end{equation}
Thus $T_a\in\CI(\RR\times V)$ by hypoellipticity (cf. \cite[Chapter I.5]{shubin1987pseudodifferential}) of $\partial_t+P_{\aalpha}$ on $\RR\times V$. Since $T_a$ vanishes for $t<0$ by definition, for any $(a,b)\in L^2(U)\times L^2(V)$, the distribution
\begin{equation}
    t\mapsto\la e^{-tP_{\aalpha}}a,b\ra_{L^2}
\end{equation}
is smooth on $[0,+\infty)$ and vanishes at $0$ to infinite order. Therefore the $N$-th derivative of the RHS vanishes of order $k$ at $t=0$ for any $N$ and $k$. Thus
\begin{equation}
    t^{-k}\la P_{\aalpha}^N e^{-tP_{\aalpha}}a,b\ra_{L^2}
\end{equation}
is bounded in $(0,1]$. The Principle of Uniform Boundedness thus yields 
\begin{equation}
    \| P_{\aalpha}^N e^{-t\pa}\|_{L^2(V)\rightarrow  L^2(U)}=\mathcal{O}(t^k) \text{ as } t\rightarrow 0
\end{equation}
for any $N,k$. Thus, the operator \eqref{red_1} is continuous for any $N$ and we proved (1). 

Now we prove (2) by comparison with the heat kernel of the standard Laplacian $\Lap$ on $\RR^2$. We only need to show \eqref{red} is a pseudodifferential operator. Choose a simply connected open set $U$ such that $U\ssubset \RRS$. Let $e$ denote the heat kernel of $P_{\aalpha}$ on $\RRS$ and $\tilde{e}$ denote the heat kernel of $e^{-i\phi} \Lap e^{i\phi}$ in $U$, where $\Lap$ is the standard Laplacian and $\phi=\sum_j \alpha_j \phi_j\in \CI(U)$ is defined in the equation \eqref{ang_i}. Note that in particular $P_{\aalpha}$ and $\Lap$ are gauge equivalent in $U$, so are the corresponding heat kernels. Furthermore, we have the operator identity
$$e^{-i\phi} \Lap e^{i\phi}=\Lap + e^{-i\phi}[\Lap, e^{i\phi}]$$
locally in $U$, therefore $\WF' e^{-i\phi} \Lap e^{i\phi}= \WF' \Lap$.  
Consider $r\df e-\tilde{e}$ as a distribution on $\RR\times U\times U$ defined by
\begin{equation}
    \la r, \phi \ra=\int_0^{\infty}\int_{U\times U}\left(e(t,x,y)-\tilde{e}(t,x,y)\right)\phi(t,x,y)dxdydt.
\end{equation}
As before, in the sense of distributions
\begin{equation}
    (2\partial_t+P_{\aalpha,x}+P_{\aalpha,y})r=0 \text{ on } \RR\times U\times U.
\end{equation}
So by hypoellipticity on $\RR\times U\times U$, $r\in\CI(\RR\times U\times U)$. Consider the Schwartz kernel
\begin{equation}
\label{red_2}
    \int_0^{\infty}r(t,x,y)\rho(t)t^{-\frac{1}{2}}dt.
\end{equation}
As in the proof of (1), $r$ is smooth and vanishes at $t=0$ to infinite order. The Schwartz kernel \eqref{red_2} therefore defines a smoothing operator. Thus, \eqref{red} is a pseudodifferential operator follows from it being the sum of \eqref{red_2} and
$$\int_0^{\infty}\tilde{e}(t,x,y)\rho(t)t^{-\frac{1}{2}}dt,$$
which is a pseudodifferential operator due to the functional calculus of $\Lap$ (or equivalently $e^{-i\phi} \Lap e^{i\phi}$ near $U$) on $\RR^2$. 
\end{proof}

\bibliography{reference}
\bibliographystyle{alpha}

\end{document}